\documentclass[a4paper,12pt]{amsart}
\usepackage{amsmath}
\usepackage{amsthm}
\usepackage{amssymb}
\usepackage{amscd}

\textheight 230mm
\textwidth 160mm
\oddsidemargin 0mm
\evensidemargin 0mm
\topmargin 0mm
\newtheorem{thm}{Theorem}[section]

\newtheorem{prop}[thm]{Proposition}%[section]
\newtheorem{lemma}[thm]{Lemma}%[section]
\newtheorem{cor}[thm]{Corollary}%[section]

\newtheorem{rmk}[thm]{Remark}%[section]

\newtheorem{Problem}{Problem}[section]
\newcommand{\proj}{\mathop{\rm Proj}\nolimits}
\newcommand{\im}{\mathop{\rm Im}\nolimits}
\newcommand{\Pic}{\mathop{\rm Pic}\nolimits}

\DeclareMathOperator{\Hom}{Hom}
\DeclareMathOperator{\Ext}{Ext}
\DeclareMathOperator{\RHom}{RHom}
\DeclareMathOperator{\lotimes}{\otimes^{\mathbb{L}}}
\newcommand{\caltor}{\mathop{{\mathcal T\!or}}\nolimits}
\DeclareMathOperator{\Ker}{Ker}
\DeclareMathOperator{\Coker}{Coker}
\DeclareMathOperator{\End}{End}

\DeclareMathOperator{\rk}{rank}

\DeclareMathOperator{\Gr}{Gr}
\DeclareMathOperator{\HN}{HN}
\DeclareMathOperator{\ch}{char}

\title[Nef vector bundles on a projective space or a hyperquadric]
{Nef vector bundles on a projective space or a hyperquadric
with the first Chern class small}

\thanks{
This work was partially supported by 
JSPS KAKENHI (C) Grant Numbers 22540043
and 15K04810.
}

\author{Masahiro Ohno
}

\address{Graduate School of Informatics and Engineering,
The University of Electro-Communications,
Chofu-shi,
Tokyo, 182-8585 Japan
}

\email{masahiro-ohno@uec.ac.jp}

\subjclass[2010]{Primary 
14J60;
Secondary 
14N30,
14F05}

\keywords{nef vector bundles,
%hyperquadrics, 
full strong exceptional sequences}

%\date{\today}
\pagestyle{plain}

%%%%%%    TEXT START    %%%%%%
\begin{document}
\begin{abstract}
We give a new proof of the classification 
due to  Peternell-Szurek-Wi\'{s}niewski
of nef vector bundles on a projective space
with the first Chern class less than three
and on a smooth hyperquadric
with the first Chern class less than two
over an algebraically closed field of characteristic zero.
\end{abstract}

\maketitle

\section{Introduction}
Let $X$ be a smooth complex projective variety of dimension $n$,
and 
$\mathcal{E}$ 
a nef vector bundle of rank $r$ on it.
Many authors have studied a pair $(X,\mathcal{E})$
of $X$ and $\mathcal{E}$
in connection with classifications of special types of Fano (or Fano-like) manifolds,
where $\mathcal{E}$
may be assumed
to be 
spanned (i.e., globally generated)
or ample,
as the case may be.
In these classifications, often appears a pair $(X, \mathbb{E})$ of 
$X$ of Picard number one
and an ample vector bundle
$\mathbb{E}$ with the adjoint bundle $K_X+\det\mathbb{E}$ trivial (i.e., isomorphic to the structure sheaf).
In case 
$K_X+\det\mathbb{E}$ is trivial, 
we have $r\leqq n+1$ by Mori's theory of extremal rays~\cite{mo2},
and such pairs $(X,\mathbb{E})$ are classified in the cases $r=n+1$, $n$, and $n-1$
by Ye-Zhang~\cite{yz}, Peternell~\cite{p0} \cite{p}, Fujita~\cite{fn}, and Peternell-Szurek-Wi\'{s}niewski~\cite{psw}.
Note here that 
the 
projective space bundle $\mathbb{P}(\mathbb{E})$ is a Fano manifold;
%in other words, $\mathbb{E}$ is a Fano bundle.
in such a case, $\mathbb{E}$ is called a Fano bundle.
In this vein, in view of  $-K_{\mathbb{P}(\mathbb{E})}=rH(\mathbb{E})$,
$\mathbb{E}$ might be called a ``Del-Pezzo bundle''
if $r=n-1$, and a ``Mukai bundle''
if $r=n-2$.
As is well known, 
Mukai manifolds,
i.e., Fano manifolds of coindex three,
are 
described in
\cite{MR0995400},
whereas 
the ``Mukai bundle'' $\mathbb{E}$
has not been investigated for an arbitrary rank $r$
even if the underlying manifold
$X$ is simple such as $\mathbb{P}^n$ or $\mathbb{Q}^n$.
Thus the present
deepest result in this direction 
is the classification of ``Del-Pezzo bundles'' due to  Peternell-Szurek-Wi\'{s}niewski~\cite{psw}.
Roughly
speaking,
their method of classification is to relate
the pair $(X,\mathbb{E})$
with a pair $(X,\mathcal{E})$ by setting $\mathcal{E}=\mathbb{E}(-1)$,
and show that  $\mathcal{E}$ is nef by their Comparison lemma~\cite[(3.1)]{psw}:
an ample $\mathbb{E}$ can be replaced by a nef $\mathcal{E}$.
Then they 
classified 
such $(X,\mathcal{E})$'s
in \cite{pswnef}
in the following cases:
if $X$ is isomorphic to a projective  $\mathbb{P}^n$ and 
the first Chern class $c_1(\mathcal{E})$ of $\mathcal{E}$ is less than three;
if $X$ is isomorphic to a hyperquadric $\mathbb{Q}^n$ ($n\geqq 3$) and 
$c_1(\mathcal{E})< 2$.
(Here, for simplicity as in \cite{pswnef}, we identify the first Chern class $c_1(\mathcal{E})$
with the corresponding non-negative integer via the isomorphism $\Pic X\cong \mathbb{Z}$
by abuse of notation.)
Thus the classification~\cite{pswnef} of the pairs $(X,\mathcal{E})$ in the above cases has fundamental importance in their proof.
Towards the classification of ``Mukai bundles'',
it seems therefore natural to consider the classification of $(X,\mathcal{E})$ in the next cases,
e.g., $X=\mathbb{P}^n$ and $c_1(\mathcal{E})=3$,
or $X=\mathbb{Q}^n$ and $c_1(\mathcal{E})=2$.
However the classification  of $(X,\mathcal{E})$ with $\mathcal{E}$ nef
of an arbitrary rank $r$
in the next cases has not been pursued over twenty years.

One of the reason why such research has not been pursued seems to come from the fact that 
it is uncertain how to describe nef bundles in general.
In order to 
overcome 
this situation, in this paper, we first 
propose a framework 
to describe nef bundles 
on a projective space or a smooth hyperquadric.
Following this framework, we secondly
give a new proof of the above classification of $(X,\mathcal{E})$ 
due to Peternell-Szurek-Wi\'{s}niewski~\cite{pswnef}.
In their proof, 
Peternell-Szurek-Wi\'{s}niewski 
analyze the contraction morphism of an extremal ray
of the Fano manifold $\mathbb{P}(\mathcal{E})$.
On the other hand, our proof depends only on the cohomological study of $\mathcal{E}$ with respect to 
a full strong exceptional sequence of vector bundles, and does not analyze any contraction morphism.
(See Theorems~\ref{d=1onProSpace}, \ref{d2OnProSpace}, and \ref{d=1OnHyperquadric}
and the proofs therein.)

More precisely our framework is based on the following observation.
Suppose there exists a full strong exceptional sequence
$G_0,\dots,G_m$ 
of vector bundles on $X$.
Recall here that 
a projective space or a smooth hyperquadric
admits such a full strong exceptional sequences 
of vector bundles
by \cite{MR0509388} and \cite{MR0939472}.
Denote by $G$ the direct sum $\bigoplus _{i=0}^m G_i$,
and by $A$ the endomorphism ring $\End(G)$ of $G$.
Let $F$ be a coherent sheaf on $X$. 
Then Bondal's theorem \cite[Theorem 6.2]{MR992977} implies an isomorhism
\[\RHom(G,F)\lotimes_A G\cong F.\]
Suppose first that $\Ext^q(G,F)=0$ for all $q>0$.
Then $\RHom(G,F)\cong \Hom(G,F)$, and 
the isomorphism above implies that a projective resolution of the right $A$-module $\Hom(G,F)$
induces the following locally free resolution of $F$:
\[0\to G_0^{\oplus e_{m,0}}\to\dots\to \bigoplus_{j=0}^{m-l}G_{j}^{\oplus e_{l,j}}\to\dots\to
\bigoplus_{j=0}^{m}G_{j}^{\oplus e_{0,j}}\to F\to 0\]
where $e_{0,j}=\dim \Hom(G_j,F)$ for all $j=0,\dots,m$
and, for any $l\geqq 1$ and any $j\leqq m-l$, $e_{l,j}$ is determined inductively by the following formula:
$e_{l,j}=\sum_{j<k}e_{l-1,k}\dim \Hom(G_j,G_k)$.
For an arbitrary coherent sheaf $F$, 
we have $\Ext^q(G,F(d))=0$ for all $q>0$ if $d$ is sufficiently large by Serre's vanishing.
Therefore we have a resolution of $F$ of the above form
by replacing $(G_0,\dots, G_m)$ by a new full strong exceptional sequence $(G_0(-d),\dots,G_m(-d))$.
Note here that Serre's vanishing does not give, in general, an effective estimate
of the integer $d$. 
However, on a projective space
or a smooth hyperquadric, 
the full strong exceptional sequence $G_0,\dots,G_m$
can be chosen to be that of well-understood vector bundles 
so that, by applying the Kodaira or Kawamata--Viehweg vanishing theorem, we can give an 
\textit{effective} 
estimate
of the integers $d$
such that $\Ext^q(G,\mathcal{E}(d))=0$ for all $q>0$ 
for a \textit{nef} vector bundle $\mathcal{E}$
(Corollaries~\ref{ProSpaceResol}, \ref{quadricResol3}, and \ref{quadricResol}).

Let us look at our proof more closely, e.g., in the case where 
$X$ is an odd dimensional smooth complex hyperquadric 
$\mathbb{Q}^n$;
in this case, we can take the sequence $(G_0,\dots,G_m)$ 
to be
$(\mathcal{O},\mathcal{S},\mathcal{O}(1), \dots,\mathcal{O}(n-1))$
where $\mathcal{S}$ is the (spanned) spinor bundle.
For a nef vector bundle $\mathcal{E}$ on $X$, 
let $d_{\min}$ be the minimal integer $d_{\min}$ such that $\Ext^q(G,\mathcal{E}(d'))=0$ for all $q>0$ and all $d'\geqq d_{\min}$.
Then the Kodaira vanishing theorem shows that $d_{\min}\leqq c_1(\mathcal{E})$
(Corollary~\ref{quadricResol}),
and we have the following 
locally free resolution
\[0\to G_0^{\oplus e_{m,0}}\to\dots\to \bigoplus_{j=0}^{m-l}G_{j}^{\oplus e_{l,j}}\to\dots\to
\bigoplus_{j=0}^{m}G_{j}^{\oplus e_{0,j}}\to \mathcal{E}(d_{\min})
\to 0.\]
By tensoring $\mathcal{O}(-d_{\min})$,
we get a locally free resolution of $\mathcal{E}$
in term of a full strong exceptional sequence 
$(\mathcal{O}(-d_{\min}),\mathcal{S}(-d_{\min}),\mathcal{O}(1-d_{\min}), \dots,\mathcal{O}(n-d_{\min}))$.
Moreover the fact that $\mathcal{E}$ is nef imposes several constraints on $e_{l,j}$'s and $d_{\min}$;
some easy constraints are that $e_{l,j}=0$ if 
$l+j>d_{\min}+c_1(\mathcal{E})+1$ (Propositions~\ref{easyConst} and \ref{firstConstraint} (2) (a))
and that $d_{\min}\geqq 0$ if 
$c_1(\mathcal{E})<r$ (Proposition~\ref{firstConstraint} (2) (d)).
Therefore if 
$c_1(\mathcal{E})$
is small then 
$d_{\min}$ has very few possible values
 and most $e_{l,j}$'s vanish.
Note that the above resolution 
contains superfluous direct summands,
so that we have to remove redundant direct summands.
If 
$c_1(\mathcal{E})=1$, other constraints among $e_{l,j}$'s and a more detailed analysis of the resolution
enable us to do so and we get the desired 
resolution as described in the classification due to Peternell-Szurek-Wi\'{s}niewski~\cite{pswnef}.

In
the subsequent papers \cite{Nefofc1=3OnPN} and \cite{Nefofc1=(21)OnQ2},
following our framework,
we classify nef vector bundles on a projective space with the first Chern class three
and the second Chern class less than eight,
and nef vector bundles on a smooth quadric surface with the first Chern class $(2,1)$.

\subsection{Notation and conventions}\label{convention}
In this paper, 
we work over an algebraically closed field $K$.
Basically we follow the standard notation and terminology in algebraic
geometry. 
For example, for a vector bundle $\mathcal{E}$,
$\mathbb{P}(\mathcal{E})$ denotes $\proj S(\mathcal{E})$,
where $S(\mathcal{E})$ denotes the symmetric algebra of $\mathcal{E}$.
The tautological line bundle $\mathcal{O}_{\mathbb{P}(\mathcal{E})}(1)$
is also denoted by $H(\mathcal{E})$.
For a 
coherent sheaf $\mathcal{F}$ on a smooth projective variety $X$,
we denote by $c_i(\mathcal{F})$ the $i$-th Chern class of $\mathcal{F}$.
For a smooth projective variety $X$,
denote by $D^b(X)$ the bounded derived category of the abelian category of coherent sheaves on $X$,
and call $D^b(X)$ the bounded derived category of coherent sheaves on $X$ for short.
We say that a vector bundle is spanned or globally generated
if it is generated by global sections.
For ``spinor bundles'',
we  follow Kapranov's convention~\cite{MR0939472};
our spinor bundles are spanned
and they are the 
duals
of those of Ottaviani's~\cite{ot}.
See \cite[\S 5 Definition 1]{MR3275418} for a precise definition of our spinor bundles.
Finally we refer to \cite{MR2095472} for the definition
and basic properties of nef vector bundles.

%
%\section*{Acknowledgement}
%Deep appreciation goes to the referee for his/her valuable comments.
%

\section{Preliminaries}\label{Preliminaries}
Let $X$ be an $n$-dimensional smooth projective variety over $K$,
and suppose that there exists a full strong exceptional sequence
$G_0,\dots,G_m$ of vector bundles on $X$.

Recall here the definition of a full strong exceptional sequence.
An object $G_i$ of the bounded derived category $D^b(X)$ 
of coherent sheaves on $X$ is said to be 
exceptional if $\RHom (G_i, G_i)\cong K$
and a sequence $G_0,\dots,G_m$ of exceptional objects 
is said to be exceptional
if $\RHom (G_i, G_j)=0$ for all $0\leqq j<i\leqq m$.
An exceptional sequence $G_0,\dots,G_m$ is said to be strong
if $\Ext^k(G_i, G_j)=0$ for all $k>0$ and $0\leqq i<j\leqq m$.
Finally a strong exceptional sequence $G_0,\dots,G_m$
is said to be full if $D^b(X)$ is the smallest triangulated full subcategory
containing $G_0,\dots,G_m$ and closed under isomorphism. 

If $X$ is an $n$-dimensional projective space $\mathbb{P}^n$, 
then $(\mathcal{O},\mathcal{O}(1),\dots,\mathcal{O}(n))$
is a strong exceptional sequence of line bundles,
and it is full by Beilinson's theorem~\cite[Theorem]{MR0509388}.
If $X$ is an odd-dimensional smooth hyperquadric $\mathbb{Q}^n$
and the characteristic $\ch K$ of the base field $K$ is zero,
then it follows from Bott's vanishing theorem 
that $(\mathcal{O},\mathcal{S},\mathcal{O}(1),\dots,\mathcal{O}(n-1))$
is a strong exceptional sequence of vector bundles,
where $\mathcal{S}$ is the spinor bundle,
and it is full by Kapranov's theorem~\cite[Theorem 4.10]{MR0939472}.
If $X$ is an even-dimensional smooth hyperquadric $\mathbb{Q}^n$
and $\ch K=0$,
then 
Bott's vanishing theorem shows
that $(\mathcal{O},\mathcal{S}^+,\mathcal{S}^-,\mathcal{O}(1),\dots,\mathcal{O}(n-1))$
is a strong exceptional sequence of vector bundles,
where $\mathcal{S}^+$ and $\mathcal{S}^-$ are spinor bundles,
and it is full by Kapranov's theorem~\cite[Theorem 4.10]{MR0939472}.
Recall here that we follow Kapranov's convention for ``spinor bundles'';
for example,
on a smooth quadric surface $\mathbb{Q}^2$, spanned line bundles $\mathcal{O}(1,0)$
and $\mathcal{O}(0,1)$ are spinor bundles,
and thus $(\mathcal{O},\mathcal{O}(1,0),\mathcal{O}(0,1),\mathcal{O}(1))$
is a full strong exceptional sequence on $\mathbb{Q}^2$.

For some other fundamental facts about derived categories,
we refer to an excellent book 
\cite{MR2182076} 
of Kashiwara-Schapira
as a literature written in English.

Denote by $G$ the direct sum $\bigoplus_{i=0}^mG_i$ of $G_0,\dots,G_m$,
and by $A$ the endomorphism ring 
$\End(G)$ of $G$.
Then $A$ is a finite-dimensional $K$-algebra.
We refer to \cite[Chap. I, II, III]{MR2197389} for some 
basic facts about modules over a finite-dimensional $K$-algebra.
Note 
that we follow the convention that 
the composite of two arrows $\alpha:a\to b$ and $\beta:b\to c$
is denoted by $\beta\alpha$.
In the same vein, we regard $G$ as a left $A$-module.

For a coherent sheaf $F$ on $X$, 
Bondal's theorem \cite[Theorem 6.2]{MR992977} implies that 
\[
\RHom(G, F)\lotimes_A F\cong F,
\]
so that if $\Ext^q(G,F)=0$ for all $q>0$
then $\RHom(G, F)\cong \Hom(G, F)$,
and a projective resolution of the right $A$-module $\Hom(G, F)$
will play a key role in this paper;
let us recall here briefly a projective resolution of a right $A$-module.

Let $p_i:G\to G_i$ be the projection,
and $\iota_i:G_i\hookrightarrow G$ the inclusion.
Set $e_i=\iota_i\circ p_i$ in $A$.
Denote $e_iA$ by $P_i$.
Then $P_i\cong \Hom(G,G_i)$ as right $A$-modules,
and $A=\bigoplus_{i=0}^m P_i$;
$P_i$ is projective and $P_i\otimes_AG\cong G_i$.

For a finitely generated right $A$-module $V$,
a right $A$-submodule $V^{\leqq i}$ of $V$ is defined 
by the formula $V^{\leqq i}=\bigoplus_{j\leqq i}Ve_j$.
We have a natural isomorphism $V^{\leqq i}\cong V\otimes_A A^{\leqq i}$,
and associated to every module $V$ is 
an ascending filtration 
\[0=V^{\leqq -1}\subset V^{\leqq 0}\subset V^{\leqq 1}\subset \dots\subset V^{\leqq m}=V\]
by right $A$-submodules. 
Set $\Gr^iV=V^{\leqq i}/V^{\leqq i-1}$; $\Gr^iV$ is a right $A$-module.
Denote by $V^i$ the $K$-vector subspace $Ve_i$ of $V$.
Note that $V^i$ is not a $A$-submodule of $V$,
but we have an isomorphism $\Gr^iV\cong V^i$ of $K$-vector spaces.
For example, we have 
\[P_k^{\leqq i}\cong \bigoplus_{j\leqq i}\Hom(G_j, G_k)
\textrm{ and }P_k^j\cong \Hom(G_j, G_k).\]
Note in particular that $P_k^k\cong K$ and that $P_k^j=0$ if $j>k$.
For a homomorphism $\varphi:V\to W$ of right $A$-modules,
we denote by $\varphi^i$ the induced homomorphism $V^i\to W^i$ of $K$-vector spaces.

We have a natural right $A$-linear map
\[\varphi_{i,V}:V^i\otimes_KP_i\to V\]
sending $v\otimes a$ to $va$.
Since every element $v$ of $V^i$ can be written as $v=v'e_i$ for some $v'\in V$, 
we see $\varphi_{i,V}(v\otimes e_i)=v$.
Hence the induced $K$-linear $\varphi_{i,V}^i$ is an isomorphism:
\[(V^i\otimes_KP_i)^i\cong V^i\otimes_KP_i^i\cong V^i.\]
All $\varphi_{i,V}$ together give a canonical surjection
\[\varphi_V:\bigoplus_{j}V^j\otimes_KP_j\to V.\]
Set $W=\Ker \varphi_V$, and consider the canonical surjection
\[\varphi_W:\bigoplus_{i}W^i\otimes_KP_i\to W\]
for $W$.
Here, for a non-zero $V$, define $d(V)$ by the following formula:
\[d(V)=\max \{j\in \mathbb{Z}_{\geqq 0}|V^j\neq 0\}.\]
Since $\varphi_V^i$ is a surjective $K$-linear
$\oplus_{i\leqq j}V^j\otimes_KP_j^i\cong (\oplus_{j\leqq m}V^j\otimes_KP_j)^i\to V^i$,
we see that 
\[\dim W^i
=\sum_{i<j\leqq d(V)}\dim V^j\dim P_j^i
=\sum_{i<j\leqq d(V)}\dim V^j\dim \Hom(G_i,G_j).
\]
In particular, $W^{i}=0$ for all $i\geqq d(V)$.
These consideration leads to the following.
\begin{lemma}\label{projresol}
Every finitely generated right $A$-module $V$
has a bounded projective resolution of the following form
\[0\to P_0^{\oplus e_{m,0}}\to\dots\to \bigoplus_{j=0}^{m-l}P_{j}^{\oplus e_{l,j}}\to\dots\to
\bigoplus_{j=0}^{m}P_{j}^{\oplus e_{0,j}}\to V\to 0\]
where $e_{0,j}=\dim V^j$ for all $j=0,\dots,m$
and, for any $l\geqq 1$ and any $j\leqq m-l$, $e_{l,j}$ is determined inductively by the following formula:
$e_{l,j}=\sum_{j<k}e_{l-1,k}\dim \Hom(G_j,G_k)$.
\end{lemma}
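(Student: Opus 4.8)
The plan is to prove the lemma by iterating the construction $V\mapsto W:=\Ker\varphi_V$ recorded just before the statement, reading off the numerical data from the formula $\dim W^i=\sum_{i<j\leqq d(V)}\dim V^j\dim\Hom(G_i,G_j)$ established above. Concretely, the surjection $\varphi_V$ fits into a short exact sequence $0\to W\to\bigoplus_{j=0}^m V^j\otimes_K P_j\to V\to 0$ of right $A$-modules whose middle term $\bigoplus_{j=0}^m P_j^{\oplus\dim V^j}$ is projective. Since $A$ is a finite-dimensional $K$-algebra, the submodule $W$ of the finitely generated module $\bigoplus_{j=0}^m V^j\otimes_K P_j$ is again finitely generated, so the process may be repeated. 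Writing $W_0=V$ and $W_{l+1}=\Ker\varphi_{W_l}$, the displayed formula applied to $W_l$ gives $W_{l+1}^i=0$ for $i\geqq d(W_l)$, hence $d(W_{l+1})\leqq d(W_l)-1$; consequently $d(W_l)\leqq d(V)-l$ and $W_l=0$ for $l\geqq d(V)+1$. Splicing the short exact sequences $0\to W_{l+1}\to\bigoplus_{j=0}^m P_j^{\oplus\dim W_l^j}\to W_l\to 0$ then yields a bounded projective resolution of $V$ whose term in homological degree $l$ is $\bigoplus_{j=0}^m P_j^{\oplus\dim W_l^j}$ and whose length is at most $d(V)\leqq m$.

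Next I would match the exponents with the $e_{l,j}$ by induction on $l$: for $l=0$ this is the definition $e_{0,j}=\dim V^j$, and if $\dim W_l^k=e_{l,k}$ for all $k$, then the displayed formula applied to $W_l$ gives $\dim W_{l+1}^i=\sum_{i<k\leqq d(W_l)}\dim W_l^k\dim\Hom(G_i,G_k)=\sum_{i<k}e_{l,k}\dim\Hom(G_i,G_k)=e_{l+1,i}$, the middle equality using $e_{l,k}=0$ for $k>d(W_l)$. To obtain the precise shape claimed, I would observe that the recursion forces $e_{l,j}\neq 0$ only when there is a strictly increasing chain $j=j_0<j_1<\dots<j_l\leqq m$ with $\dim V^{j_l}\neq 0$; hence $e_{l,j}=0$ whenever $j>m-l$ and whenever $l>d(V)$, so the degree-$l$ term is $\bigoplus_{j=0}^{m-l}P_j^{\oplus e_{l,j}}$ and the degrees above $d(V)$ contribute nothing.

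I do not expect a genuine obstacle here: the canonical surjection $\varphi_V$ and the key identity for $\dim W^i$ are already in hand, so what remains is essentially a bookkeeping induction. The three points that deserve an explicit word are that $W$ stays finitely generated (so the iteration continues), that $d(W)<d(V)$ (so it terminates with the asserted length bound), and the base case $d(V)=0$, where $\varphi_V$ is an isomorphism, $W=0$, and the resolution reduces to $V\cong P_0^{\oplus\dim V^0}$.
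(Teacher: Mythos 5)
Your proposal is correct and is essentially the paper's own argument: the paper derives the canonical surjection $\varphi_V$ and the formula for $\dim W^i$ and then obtains the lemma precisely by the iteration you spell out, with the same bookkeeping identification of the exponents $e_{l,j}$ and the same termination mechanism via $d(W)<d(V)$.
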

\begin{rmk}\label{MinimalResol}
The resolution above is not minimal in general.
Throughout this paper, 
we shall denote by 
\[0\to P_0^{\oplus e_{m,0}'}\to\dots\to \bigoplus_{j=0}^{m-l}P_{j}^{\oplus e_{l,j}'}\to\dots\to
\bigoplus_{j=0}^{m}P_{j}^{\oplus e_{0,j}'}\to V\to 0\]
a minimal resolution of $V$
with $0\leqq e_{l,j}'\leqq e_{l,j}$ for all $0\leqq j\leqq m-l\leqq m$.
\end{rmk}

\begin{prop}
\label{general resolution}
Under the assumption and notation as above,
let $F$ be a coherent sheaf on $X$.
Suppose that 
$\Ext^q(G,F)=0$ for all $q>0$.
Then $F$ has a locally free resolution of the following form:
\[0\to G_0^{\oplus e_{m,0}}\to\dots\to \bigoplus_{j=0}^{m-l}G_{j}^{\oplus e_{l,j}}\to\dots\to
\bigoplus_{j=0}^{m}G_{j}^{\oplus e_{0,j}}\to F\to 0\]
where $e_{0,j}=\dim \Hom(G_j,F)$ for all $j=0,\dots,m$
and, for any $l\geqq 1$ and any $j\leqq m-l$, $e_{l,j}$ is determined inductively by the following formula:
\[e_{l,j}=\sum_{j<k}e_{l-1,k}\dim \Hom(G_j,G_k).\]
\end{prop}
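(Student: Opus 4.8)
The plan is to feed the right $A$-module $V:=\Hom(G,F)$ into Lemma~\ref{projresol} and then transport the resulting projective resolution to $X$ by means of Bondal's theorem. First I would record two preliminary identifications. Since $X$ is projective, $G$ is a vector bundle and $F$ is coherent, $V=\Hom(G,F)=H^0(X,G^{\vee}\otimes F)$ is a finite-dimensional $K$-vector space, hence a finitely generated right $A$-module, so Lemma~\ref{projresol} applies to it. The hypothesis $\Ext^q(G,F)=0$ for all $q>0$ gives, as already noted in the excerpt, $\RHom(G,F)\cong\Hom(G,F)=V$, an object concentrated in cohomological degree $0$. Moreover, for each $j$, composing with the split surjection $p_j\colon G\to G_j$ identifies $\Hom(G_j,F)$ with $Ve_j=V^j$, so $\dim V^j=\dim\Hom(G_j,F)$; this makes the $e_{0,j}$ of Lemma~\ref{projresol} agree with $\dim\Hom(G_j,F)$ in the present statement, and the inductive formula for the $e_{l,j}$ is then literally the one in Lemma~\ref{projresol}.

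Next I would apply Lemma~\ref{projresol} to $V$ to obtain a bounded projective resolution $P_{\bullet}\to V$ of the stated shape, and then apply the functor $-\lotimes_A G$ to it. Because $P_{\bullet}$ is a bounded complex of projective right $A$-modules quasi-isomorphic to $V$, the object $V\lotimes_A G$ is represented by the complex of locally free sheaves obtained by tensoring $P_{\bullet}$ with $G$ over $A$; using $P_j\otimes_A G\cong G_j$, this complex is precisely
\[0\to G_0^{\oplus e_{m,0}}\to\dots\to\bigoplus_{j=0}^{m-l}G_j^{\oplus e_{l,j}}\to\dots\to\bigoplus_{j=0}^{m}G_j^{\oplus e_{0,j}}\to 0,\]
placed in cohomological degrees $-m,\dots,0$. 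On the other hand, Bondal's theorem \cite[Theorem 6.2]{MR992977} gives $\RHom(G,F)\lotimes_A G\cong F$ in $D^b(X)$, and since $\RHom(G,F)\cong V$ the left-hand side is $V\lotimes_A G$. Hence the displayed complex has cohomology $F$ in degree $0$ and vanishing cohomology in every negative degree; equivalently, the augmented sequence
\[0\to G_0^{\oplus e_{m,0}}\to\dots\to\bigoplus_{j=0}^{m-l}G_j^{\oplus e_{l,j}}\to\dots\to\bigoplus_{j=0}^{m}G_j^{\oplus e_{0,j}}\to F\to 0\]
is exact, which is the asserted locally free resolution. As in Remark~\ref{MinimalResol}, it need not be minimal, but minimality is not claimed here.

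The one step carrying genuine mathematical content is the appeal to Bondal's theorem to conclude that $V\lotimes_A G$ is cohomologically concentrated in degree $0$ and equal to $F$ there: this is where fullness of the exceptional sequence enters, and it is precisely what upgrades the tensored complex from a complex of vector bundles with the correct class in $K$-theory to an honest resolution of $F$. Everything else is bookkeeping: the finite generation of $V$, the identification $\RHom(G,F)\cong\Hom(G,F)$ under the vanishing hypothesis, the fact that $-\otimes_A G$ carries each $P_j$ to $G_j$, and the dimension count $\dim V^j=\dim\Hom(G_j,F)$ together merely transcribe Lemma~\ref{projresol} from the category of right $A$-modules to that of coherent sheaves on $X$. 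I therefore anticipate no serious obstacle beyond correctly quoting Bondal's isomorphism and keeping track of the homological indexing.
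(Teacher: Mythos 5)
Your argument is correct and follows the same route as the paper: apply Lemma~\ref{projresol} to the finitely generated right $A$-module $\Hom(G,F)$, use the vanishing hypothesis together with Bondal's theorem to get $\Hom(G,F)\lotimes_A G\cong F$, and transport the projective resolution to $X$ via $P_j\otimes_A G\cong G_j$. You merely make explicit the bookkeeping (finite generation of $\Hom(G,F)$, the identification $\Hom(G,F)^j\cong\Hom(G_j,F)$, and the fact that tensoring the bounded complex of projectives computes the derived tensor product) that the paper leaves implicit.
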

\begin{proof}
Since $\Ext^q(G,F)=0$ for all $q>0$,
Bondal's theorem \cite[Theorem 6.2]{MR992977} implies that  $\Hom(G, F)\lotimes_A G\cong F$.
Since $\Hom(G,F)^j\cong \Hom(G_j,F)$,
Lemma~\ref{projresol} shows 
the following projective resolution of $\Hom (G,F)$:
\[0\to P_0^{\oplus e_{m,0}}\to\dots\to \bigoplus_{j=0}^{m-l}P_{j}^{\oplus e_{l,j}}\to\dots\to
\bigoplus_{j=0}^{m}P_{j}^{\oplus e_{0,j}}\to \Hom(G,F)\to 0.\]
Since $P_j\otimes_AG\cong G_j$, the projective resolution above
induces the desired locally free resolution
of the coherent sheaf $F$.
\end{proof}

Set $\mathcal{P}_l=\oplus_{j=0}^{m-l}G_{j}^{\oplus e_{l,j}}$
for $0\leqq l\leqq m$,
and let $\mathcal{P}_{\bullet}$ denote the resulting complex.

\begin{rmk}
Set 
$\HN_i(\mathcal{P}_{\bullet})=
\oplus_{j=i}^{m-\bullet}
G_j^{\oplus e_{\bullet,j}}$.
Then $\mathcal{P}_{\bullet}$ has the following filtration of Harder-Narasimhan type
\[0\to \HN_m(\mathcal{P}_{\bullet})\to \dots \to \HN_i(\mathcal{P}_{\bullet})\to \dots\to \HN_0(\mathcal{P}_{\bullet})=\mathcal{P}_{\bullet}\]
with 
$\HN_{i}(\mathcal{P}_{\bullet})/\HN_{i+1}(\mathcal{P}_{\bullet})\cong 
\oplus_{i\leqq m-\bullet}G_i^{\oplus e_{\bullet,i}}\in \langle G_i\rangle
$,
where $\langle G_i\rangle$ denotes the smallest triangulated subcategory of $D^b(X)$ containing $G_i$ 
and closed under isomorphism.
Note that $\Hom_{D^b(X)}(\langle G_i\rangle,\langle G_j\rangle)=0$ if $i>j$.
If we regard $\mathcal{P}_{\bullet}$ and $F$ as objects of $D^b(X)$
and the filtration above as that in $D^b(X)$, 
then $\mathcal{P}_{\bullet}\cong F$ and semiorthogonality of 
$\langle G_0\rangle,\dots,\langle G_m\rangle$ implies that 
the filtration above is unique and functorial with respect to $F$.
\end{rmk}
Let $\mathcal{E}$ be a coherent sheaf on $X$,
and let $\mathcal{O}_X(1)$ be an ample line bundle on $X$.
By Serre's vanishing theorem,
we see that if $d'\gg 0$ then $\Ext^q(G,\mathcal{E}(d'))=0$ for all $q>0$.
Let $d_{\min}$ be the minimal integer $d_{\min}$ 
such that 
$\Ext^q(G,\mathcal{E}(d'))=0$ for all $q>0$ and all $d'\geqq d_{\min}$.

\begin{cor}\label{Resol}
Under the notation above, $\mathcal{E}(d_{\min})$ fits in the following exact sequence:
\[0\to G_0^{\oplus e_{m,0}}\to\dots\to \bigoplus_{j=0}^{m-l}G_{j}^{\oplus e_{l,j}}\to\dots\to
\bigoplus_{j=0}^{m}G_{j}^{\oplus e_{0,j}}\to \mathcal{E}(d_{\min})\to 0,\]
where $e_{0,j}=\dim \Hom(G_j,\mathcal{E}(d_{\min}))$ for all $j=0,\dots,m$ and,
for any $l\geqq 1$ and any $j\leqq m-l$, $e_{l,j}$ is determined inductively by 
$e_{l,j}=\sum_{j<k}e_{l-1,k}\dim \Hom(G_j,G_k)$.
Moreover we can replace $e_{l,j}$ by some integer $e_{l,j}'$
such that $0\leqq e_{l,j}'\leqq e_{l,j}$ for all $0\leqq j\leqq m-l\leqq m$ 
corresponding to the minimal resolution (see Remark~\ref{MinimalResol}
for the precise definition of $e_{l,j}'$).
\end{cor}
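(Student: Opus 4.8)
The plan is to deduce Corollary~\ref{Resol} directly from Proposition~\ref{general resolution} together with the definition of $d_{\min}$. First I would observe that, by the very definition of $d_{\min}$, we have $\Ext^q(G,\mathcal{E}(d_{\min}))=0$ for all $q>0$; this is exactly the hypothesis needed to apply Proposition~\ref{general resolution} with the coherent sheaf $F=\mathcal{E}(d_{\min})$. That proposition then yields at once the exact sequence
\[0\to G_0^{\oplus e_{m,0}}\to\dots\to \bigoplus_{j=0}^{m-l}G_{j}^{\oplus e_{l,j}}\to\dots\to
\bigoplus_{j=0}^{m}G_{j}^{\oplus e_{0,j}}\to \mathcal{E}(d_{\min})\to 0,\]
with $e_{0,j}=\dim\Hom(G_j,\mathcal{E}(d_{\min}))$ and the stated inductive formula $e_{l,j}=\sum_{j<k}e_{l-1,k}\dim\Hom(G_j,G_k)$ for $l\geqq 1$. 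So the first sentence of the corollary is an immediate specialization and requires no new argument beyond checking the hypothesis, which holds tautologically.

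For the "moreover" part, I would invoke Remark~\ref{MinimalResol}: the resolution produced by Lemma~\ref{projresol} (and hence the one of Proposition~\ref{general resolution}) is in general not minimal, and a minimal resolution of the right $A$-module $\Hom(G,\mathcal{E}(d_{\min}))$ has terms $\bigoplus_{j=0}^{m-l}P_j^{\oplus e_{l,j}'}$ with $0\leqq e_{l,j}'\leqq e_{l,j}$ for all $0\leqq j\leqq m-l\leqq m$. Applying the exact functor $-\otimes_A G$ (which sends $P_j$ to $G_j$) to this minimal projective resolution gives the desired locally free resolution of $\mathcal{E}(d_{\min})$ with the $e_{l,j}$ replaced by $e_{l,j}'$. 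Here one should note that since $-\otimes_A G$ carries the projective resolution to a locally free resolution of the same sheaf, the passage from the (possibly non-minimal) resolution to the minimal one amounts to cancelling trivial summands $G_j\xrightarrow{\mathrm{id}}G_j$ in the complex, which does not alter the quasi-isomorphism class.

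There is essentially no obstacle here: the statement is a corollary in the strict sense, packaging Proposition~\ref{general resolution} and Remark~\ref{MinimalResol} in the form that will be used repeatedly in the sequel. The only point deserving a word of care is the identification of the minimal resolution over $A$ with the minimal locally free resolution of $\mathcal{E}(d_{\min})$ on $X$; this follows because $\Hom(G,-)$ and $-\otimes_A G$ are mutually inverse equivalences on the relevant subcategories (the content of Bondal's theorem invoked earlier), so minimality is preserved. I would therefore present the proof in just a few lines: verify the vanishing hypothesis from the definition of $d_{\min}$, quote Proposition~\ref{general resolution}, and then quote Remark~\ref{MinimalResol} for the minimal version.
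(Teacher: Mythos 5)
Your proposal is correct and follows exactly the route the paper intends: the corollary is an immediate specialization of Proposition~\ref{general resolution} to $F=\mathcal{E}(d_{\min})$ (the vanishing hypothesis holding by definition of $d_{\min}$), with the ``moreover'' part supplied by Remark~\ref{MinimalResol}. Only a small caveat: $-\otimes_A G$ is not literally an exact functor (it is only right exact, as $G$ need not be flat over $A$), but your own observation that the minimal resolution differs from the one in Lemma~\ref{projresol} by split trivial summands $P_j\xrightarrow{\mathrm{id}}P_j$, whose images $G_j\xrightarrow{\mathrm{id}}G_j$ are exact, together with Bondal's isomorphism $\RHom(G,\mathcal{E}(d_{\min}))\lotimes_A G\cong\mathcal{E}(d_{\min})$, is precisely what justifies that the tensored minimal resolution is still a resolution.
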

In the rest of this paper, 
we call the exact sequence in Corollary~\ref{Resol}
\textit{the standard resolution of $\mathcal{E}(d_{\min})$
with respect to the (prescribed) full strong exceptional sequence $(G_0,\dots,G_m)$
of vector bundles},
and let $e_{l,j}'$ and $e_{l,j}$ be as in Corollary~\ref{Resol}.
The following is an easy but fundamental relation among $e_{l,j}$'s.
\begin{prop}\label{easyConst} 
If $e_{l,k}=0$ for all $k>j$, then $e_{l+1,k}=0$ for all $k>j-1$.
\end{prop}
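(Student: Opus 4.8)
The plan is to deduce the claim immediately from the inductive formula that defines the $e_{l,j}$'s, recorded in Lemma~\ref{projresol} and reused in Corollary~\ref{Resol}. Recall that for every $l\geqq 1$ and every admissible index $j$ one has
\[
e_{l,j}=\sum_{j<k}e_{l-1,k}\dim\Hom(G_j,G_k),
\]
where the sum runs over all integers $k$ with $j<k\leqq m$, and where I adopt the convention $e_{l',j'}=0$ whenever $j'$ falls outside the range $0\leqq j'\leqq m-l'$. This convention is already implicit in the shape $\mathcal{P}_l=\bigoplus_{j=0}^{m-l}G_j^{\oplus e_{l,j}}$ of the standard resolution, and it makes the displayed recursion hold uniformly in $j$.

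First I would fix an arbitrary integer $k'$ with $k'>j-1$, that is, $k'\geqq j$, and apply the recursion with $(l,j)$ replaced by $(l+1,k')$:
\[
e_{l+1,k'}=\sum_{k'<k}e_{l,k}\dim\Hom(G_{k'},G_k).
\]
Every index $k$ occurring in this sum satisfies $k>k'\geqq j$, hence $k>j$, so $e_{l,k}=0$ by the hypothesis of the proposition. Consequently every summand is zero and $e_{l+1,k'}=0$, which is exactly what is to be proved.

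The only point that needs a little attention --- and the nearest thing to an obstacle in so short an argument --- is the bookkeeping of the index ranges: one must check that the computation also covers those $k'$ for which $e_{l+1,k'}$ would lie ``beyond the end'' of the resolution, and that the above zero-extension convention is consistent with the recursion; both are routine once the convention is made explicit. Finally, I would note that the same conclusion holds for the truncated exponents $e_{l,j}'$ of the minimal resolution: from $e_{l,k}=0$ for all $k>j$ we get $e_{l,k}'=0$ for all $k>j$ by $0\leqq e_{l,k}'\leqq e_{l,k}$, and then $e_{l+1,k'}'\leqq e_{l+1,k'}=0$ for all $k'>j-1$ by what was just shown.
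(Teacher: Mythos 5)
Your proof is correct and is essentially the paper's own argument: the paper simply states that the claim ``follows immediately from the definition of $e_{l,j}$'', and your computation $e_{l+1,k'}=\sum_{k'<k}e_{l,k}\dim\Hom(G_{k'},G_k)=0$ for $k'\geqq j$ is precisely that observation spelled out. The extra remarks on index-range conventions and on the minimal exponents $e_{l,j}'$ are harmless and consistent with the paper.
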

\begin{proof}
This follows immediately from the definition of $e_{l,j}$.
\end{proof}

To reduce $e_{0,0}$ to $e_{0,0}'$, the following proposition is also fundamental.
\begin{prop}\label{secondconstraint}
If $\mathcal{E}(d_{\min})$ does not admit 
$G_0$
as a quotient.
Then $e_{1,0}\geqq e_{0,0}$ and,
in the 
standard 
resolution, we can replace $\mathcal{P}_0$
by $\mathcal{P}_0^0$ 
and 
$\mathcal{P}_1$ by $\mathcal{P}_1^0$,
where 
$\mathcal{P}_0^0=\oplus_{j=1}^{m}G_{j}^{\oplus e_{0,j}}$
and $\mathcal{P}_1^0=G_0^{\oplus e_{1,0}-e_{0,0}}
\oplus(\oplus_{j=1}^{m-1}G_{j}^{\oplus e_{1,j}})$.
In particular we see that $e_{0,0}'=0$.
\end{prop}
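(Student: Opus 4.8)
The plan is to split the summand $G_0^{\oplus e_{0,0}}$ off the term $\mathcal{P}_0$ of the standard resolution by cancelling it against $e_{0,0}$ of the copies of $G_0$ occurring in $\mathcal{P}_1$. Write $\mathcal{P}_0=G_0^{\oplus e_{0,0}}\oplus\mathcal{P}_0^0$ and $\mathcal{P}_1=G_0^{\oplus e_{1,0}}\oplus\mathcal{Q}$, where $\mathcal{P}_0^0=\bigoplus_{j=1}^{m}G_j^{\oplus e_{0,j}}$ and $\mathcal{Q}=\bigoplus_{j=1}^{m-1}G_j^{\oplus e_{1,j}}$, and denote by $d_1\colon\mathcal{P}_1\to\mathcal{P}_0$ and $d_0\colon\mathcal{P}_0\to\mathcal{E}(d_{\min})$ the maps of the standard resolution. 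The key observation is that the $G_0^{\oplus e_{0,0}}$-component of $d_1$ vanishes on $\mathcal{Q}$: indeed a homomorphism $G_j\to G_0$ with $j\geq 1$ is zero by the exceptionality of $G_0,\dots,G_m$. Since moreover $\End(G_0)=K$, this component is carried by a homomorphism $\beta\colon G_0^{\oplus e_{1,0}}\to G_0^{\oplus e_{0,0}}$ given by a constant $e_{0,0}\times e_{1,0}$ matrix over $K$; in particular $\Coker\beta\cong G_0^{\oplus(e_{0,0}-\rk\beta)}$.

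First I would identify the cokernel of the composite $\mathcal{P}_0^0\hookrightarrow\mathcal{P}_0\to\mathcal{E}(d_{\min})$. A short diagram chase with the exact sequence $\mathcal{P}_1\to\mathcal{P}_0\to\mathcal{E}(d_{\min})\to 0$ and the subbundle $\mathcal{P}_0^0\subseteq\mathcal{P}_0$ shows that $d_0$ induces an exact sequence $\mathcal{P}_1\to\mathcal{P}_0/\mathcal{P}_0^0\to\mathcal{E}(d_{\min})/d_0(\mathcal{P}_0^0)\to 0$; here $\mathcal{P}_0/\mathcal{P}_0^0=G_0^{\oplus e_{0,0}}$ and, by the vanishing noted above, the first map has the same image as $\beta$, so $\mathcal{E}(d_{\min})/d_0(\mathcal{P}_0^0)\cong\Coker\beta\cong G_0^{\oplus(e_{0,0}-\rk\beta)}$. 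This sheaf is a quotient of $\mathcal{E}(d_{\min})$, which by hypothesis admits no surjection onto $G_0$; hence $e_{0,0}-\rk\beta=0$. In particular $\rk\beta=e_{0,0}$, which forces $e_{1,0}\geq e_{0,0}$ and makes $\beta$ surjective.

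Next, choosing a section of $\beta$, write $G_0^{\oplus e_{1,0}}=G_0^{\oplus e_{0,0}}\oplus\ker\beta$ with $\ker\beta\cong G_0^{\oplus(e_{1,0}-e_{0,0})}$ and $\beta$ restricting to an isomorphism on the first factor; thus $\mathcal{P}_1=G_0^{\oplus e_{0,0}}\oplus\mathcal{P}_1^0$ with $\mathcal{P}_1^0=G_0^{\oplus(e_{1,0}-e_{0,0})}\oplus\mathcal{Q}$, and $d_1$ now has an identity block between the summand $G_0^{\oplus e_{0,0}}$ of $\mathcal{P}_1$ and the summand $G_0^{\oplus e_{0,0}}$ of $\mathcal{P}_0$. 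By the standard cancellation (Gaussian elimination) lemma for bounded complexes, deleting these two isomorphic summands produces a complex homotopy equivalent to $\mathcal{P}_{\bullet}$, with the terms $\mathcal{P}_2,\dots,\mathcal{P}_m$ unchanged and the differentials suitably modified, namely $\cdots\to\mathcal{P}_2\to\mathcal{P}_1^0\to\mathcal{P}_0^0\to\mathcal{E}(d_{\min})\to 0$; being homotopy equivalent to the original exact complex, this is again exact. This is exactly the asserted replacement of $\mathcal{P}_0$ by $\mathcal{P}_0^0$ and of $\mathcal{P}_1$ by $\mathcal{P}_1^0$. Finally, since the $0$-th term $\mathcal{P}_0^0$ of this new resolution contains no copy of $G_0$, and further reduction to the minimal resolution can only lower the multiplicities, we conclude $e_{0,0}'=0$.

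The only delicate step is the vanishing of the $G_0^{\oplus e_{0,0}}$-component of $d_1$ on $\mathcal{Q}$: this is what guarantees that $\mathcal{E}(d_{\min})/d_0(\mathcal{P}_0^0)$ is genuinely a direct sum of copies of $G_0$, rather than merely some quotient of such a bundle, and it is precisely there that the hypothesis that $G_0$ is not a quotient of $\mathcal{E}(d_{\min})$ is brought to bear. The remainder---the diagram chase computing $\Coker\beta$ and the cancellation of the complex---is routine.
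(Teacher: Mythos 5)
Your proof is correct and follows essentially the same route as the paper: you use $\Hom(G_j,G_0)=0$ for $j\geqq 1$ and $\End(G_0)\cong K$ to see that the $G_0$-component of $d_1$ is a constant matrix $\beta$, invoke the hypothesis that $G_0$ is not a quotient of $\mathcal{E}(d_{\min})$ to force $\beta$ to be surjective (hence $e_{1,0}\geqq e_{0,0}$), and then remove the redundant copies of $G_0$. The only difference is bookkeeping: the paper performs the removal by passing to $\Ker(p_{0,0})$ and $\Ker(p_{0,0}\circ d_1)$, while you split off an isomorphism block and apply Gaussian elimination of a contractible summand, which amounts to the same reduction.
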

\begin{proof}
Denote by $d_1$ the differential $\mathcal{P}_1\to \mathcal{P}_0$,
and let $p_{l,0}:\mathcal{P}_l\to G_0^{\oplus e_{l,0}}$ be the projection.
Then the composite $p_{0,0}\circ d_1$ factors through $p_{1,0}$:
$p_{0,0}\circ d_1=d_{1,0}\circ p_{1,0}$ 
where $d_{1,0}:G_0^{\oplus e_{1,0}}\to G_0^{\oplus e_{0,0}}$
is the induced morphism.
Suppose that $d_{1,0}$ is not surjective.
Then 
we have a surjection
$q:G_0^{\oplus e_{0,0}}\to G_0$ such that $q\circ d_{1,0}=0$.
Since $(q\circ p_{0,0})\circ d_1=q\circ d_{1,0}\circ p_{1,0}=0$,
there exists a surjection $\mathcal{E}(d_{\min})\to G_0$,
which contradicts the assumption.
Therefore $d_{1,0}$ is surjective and $e_{1,0}\geqq e_{0,0}$.
Moreover
$p_{0,0}\circ d_1
$ is surjective.
Since 
$\Ker(p_{0,0})=
\mathcal{P}_0^0
$ and 
$\Ker(p_{0,0}\circ d_1)
=\mathcal{P}_1^0$,
the desired replacement can be done in the 
standard
resolution of $\mathcal{E}(d_{\min})$.
\end{proof}
\begin{rmk}
If $d_{\min}>0$, $G_0=\mathcal{O}$, and $\mathcal{E}$ is nef,
then $\mathcal{E}(d_{\min})$ does not admit the sheaf $\mathcal{O}$
as a quotient.
\end{rmk}

\section{Some easy constraints}\label{easyconstraints}
Let $X$, $G_0,\dots,G_m$,
$\mathcal{E}$, $\mathcal{O}_X(1)$,
$d_{\min}$, $e_{l,j}$, $e_{l,j}'$, and 
$\mathcal{P}_l
$
be as in 
\S~\ref{Preliminaries}
for $0\leqq j\leqq l\leqq m$.
In the rest of this paper, we always assume that $\mathcal{E}$ is a nef vector bundle
of rank $r$
and that $\mathcal{O}_X(1)$ is a ``suitable" ample line bundle on $X$,
e.g., an ample line bundle of ``minimal degree".
Then it is natural to consider the following
\begin{Problem} 
What constraints does the condition that $\mathcal{E}$ is nef
impose on (or among) $d_{\min}$, $e_{l,j}$'s, and $e_{l,j}'$'s ?
Find good constraints on (or among) them.
\end{Problem}
The rest of this paper addresses this problem in the following cases:
\begin{enumerate}
\item $X$ is a projective space $\mathbb{P}^n$, 
$\mathcal{O}_X(1)$ is the ample generator of $\Pic X$,
and $(G_0,\dots, G_m)$ is equal to $(\mathcal{O},\mathcal{O}(1),\dots,\mathcal{O}(n))$;
\item $X$ is an odd dimensional smooth hyperquadric 
$\mathbb{Q}^n$ with $n\geqq 3$,
the field $K$ is of characteristic zero,
$\mathcal{O}_X(1)$ is the ample generator of $\Pic X$,
and $(G_0,\dots, G_m)$ is equal to $(\mathcal{O},\mathcal{S},\mathcal{O}(1),\dots,
\mathcal{O}(n-1)
)$,
where 
$\mathcal{S}$ is the spinor bundle on 
$\mathbb{Q}^{
n}$;
\item $X$ is an even dimensional smooth hyperquadric 
$\mathbb{Q}^{
n}$, 
the field $K$ is of characteristic zero,
and $(G_0,\dots, G_m)$ is equal to 
$(\mathcal{O},\mathcal{S}^+,\mathcal{S}^-, \mathcal{O}(1),\dots,
\mathcal{O}(
n-1))$,
where 
$\mathcal{S}^+$ and $\mathcal{S}^-$ are the spinor bundles on 
$\mathbb{Q}^{
n}$.
If 
$n\geqq 4$,
then $\mathcal{O}_X(1)$ is the ample generator of $\Pic X$,
and if 
$n=2$,
then $\mathcal{O}_X(1)$ is the ample line bundle $\mathcal{O}(1,1)$ of minimal degree.
\end{enumerate}
Thus we always assume, in the rest of the paper,  that if 
$X$ is as in (1), (2), or (3), then $(G_0,\dots,G_m)$ is as in (1), (2), or (3) respectively.

If $\Pic X\cong \mathbb{Z}$, 
we denote by $d$ the 
integer 
such that $\mathcal{O}_X(d)\cong \det\mathcal{E}$,
and if 
$X\cong \mathbb{Q}^2$,
we denote by $(a,b)$ the pair of 
integers 
such that $\mathcal{O}_{\mathbb{Q}^2}(a,b)\cong \det\mathcal{E}$.

In this section, we give some easy constraints among $d_{\min}$ and $e_{l,j}$'s in the cases 
(1), (2), and (3) above.

\begin{prop}\label{firstConstraint} 
We have the following constraints
\begin{enumerate} 
\item[(1)] Suppose that $X=\mathbb{P}^n$.
\begin{enumerate}
\item If $j>d+d_{\min}$, then $e_{0,j}=0$. 
\item If $d<r$, then $d_{\min}\geqq 0$.
\end{enumerate}
\item[(2)] Suppose that $X=\mathbb{Q}^{n}$.
\begin{enumerate}
\item Suppose that $n$ is odd. If $j>d+d_{\min}+1$, then $e_{0,j}=0$. 
\item Suppose that $n$ is even. If $j>d+d_{\min}+2$, then $e_{0,j}=0$. 
\item Suppose that $n$ is even. If $e_{l,k}=0$ for all $k>2$, then $e_{l+1,k}=0$ for all $k>0$.
In particular $e_{n,1}=0$ and $e_{n+1,0}=0$.
\item If $d<r$, then $d_{\min}\geqq 0$.
\item Suppose that $n=2$. If $\min \{a,b\}<r$, then $d_{\min}\geqq 0$.
\end{enumerate}
\end{enumerate}
\end{prop}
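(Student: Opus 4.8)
The six assertions split into two groups treated by different means: the vanishing statements~(1)(a), (2)(a), (2)(b) for the $e_{0,j}$, together with the purely combinatorial~(2)(c); and the positivity statements~(1)(b), (2)(d), (2)(e) for $d_{\min}$. For the second group the plan is to argue by contradiction, assuming $d_{\min}\leqq-1$.

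\textbf{Vanishing statements and (2)(c).} The key observation is that every $G_j$ in our sequences is either a non-negative twist $\mathcal{O}(c)$ of the structure sheaf or a spanned spinor bundle, and in either case $e_{0,j}=\dim\Hom(G_j,\mathcal{E}(d_{\min}))$ is controlled by $h^0$ of a twist of $\mathcal{E}$: for $G_j=\mathcal{O}(c)$ one has $e_{0,j}=h^0(\mathcal{E}(d_{\min}-c))$, and for a surjection $\mathcal{O}^{\oplus N}\twoheadrightarrow G_j$ one gets $\Hom(G_j,\mathcal{E}(d_{\min}))\hookrightarrow H^0(\mathcal{E}(d_{\min}))^{\oplus N}$. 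So everything reduces to the elementary statement that, for a nef $\mathcal{E}$ and a line $\ell\subset X$, the inequality $\mathcal{O}_X(t)\cdot\ell>c_1(\mathcal{E})\cdot\ell$ implies $H^0(\mathcal{E}(-t))=0$: restricting a hypothetical nonzero section to $\ell$ and using $\mathcal{E}|_\ell=\bigoplus\mathcal{O}(a_i)$ with $0\leqq a_i\leqq\sum a_i=c_1(\mathcal{E})\cdot\ell<\mathcal{O}_X(t)\cdot\ell$, the section vanishes on every line, hence, $\mathcal{E}(-t)$ being torsion free and lines covering $X$, on all of $X$. Granting this, (1)(a), (2)(a), (2)(b) are bookkeeping of the offset ($0$ on $\mathbb{P}^n$, $1$ on odd $\mathbb{Q}^n$, $2$ on even $\mathbb{Q}^n$) produced by the spinor bundles: e.g.\ on odd $\mathbb{Q}^n$, $G_j=\mathcal{O}(j-1)$ for $j\geqq2$ gives $e_{0,j}=h^0(\mathcal{E}(d_{\min}-(j-1)))=0$ once $(j-1)-d_{\min}>d$, i.e.\ $j>d+d_{\min}+1$, and the cases $j=0,1$ are subsumed because that inequality then already forces $-d_{\min}>d$, whence $h^0(\mathcal{E}(d_{\min}))=0$. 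For~(2)(c): from $e_{l+1,j}=\sum_{k>j}e_{l,k}\dim\Hom(G_j,G_k)$ and $e_{l,k}=0$ for $k>2$, the only possibly nonzero $e_{l+1,j}$ with $j>0$ is $e_{l+1,1}=e_{l,2}\dim\Hom(\mathcal{S}^+,\mathcal{S}^-)$, which vanishes because $\Hom(\mathcal{S}^+,\mathcal{S}^-)=0$ (Bott's theorem); the ``in particular'' follows since $e_{l,k}=0$ for $k>2$ automatically once $l\geqq n-1$ (Proposition~\ref{easyConst}), whence $e_{n,1}=0$ and then $e_{n+1,0}=0$.

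\textbf{Positivity statements.} Suppose $d_{\min}\leqq-1$, so $\Ext^q(G,\mathcal{E}(-1))=0$ for $q>0$. On $\mathbb{P}^n$ this is instantly contradictory: $H^q(\mathcal{E}(-1-q))=\Ext^q(\mathcal{O}(q),\mathcal{E}(-1))=0$ for $1\leqq q\leqq n$, so $\mathcal{E}$ is $(-1)$-regular, hence $\mathcal{E}(-1)$ is globally generated, hence $d-r=c_1(\mathcal{E}(-1))\geqq0$, contradicting $d<r$; this proves~(1)(b). On a hyperquadric the sequence contains $\mathcal{O}(c)$ only for $0\leqq c\leqq n-1$, so the same reasoning only yields $0$-regularity of $\mathcal{E}$ and (Serre duality on the top $\Ext$) $\Hom(\mathcal{E},\mathcal{O})=0$, which is insufficient by itself — witness the spinor bundle $\mathcal{S}$ on $\mathbb{Q}^3$, nef with $c_1=1<2=\rk\mathcal{S}$ yet $\Hom(\mathcal{S},\mathcal{O})=0$ (indeed $\Ext^1(\mathcal{S},\mathcal{S}(-1))\neq0$, so $d_{\min}(\mathcal{S})=0$). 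Instead I would use the shape of the standard resolution of $\mathcal{E}(-1)$: as $\mathcal{E}$ is nef, $\Hom(G_j,\mathcal{E}(-1))=0$ whenever $G_j=\mathcal{O}(c)$ with $c\geqq d$, so by Proposition~\ref{easyConst} this resolution involves only $G_0,\dots,G_{d}$ (on odd $\mathbb{Q}^n$; $G_0,\dots,G_{d+1}$ on even $\mathbb{Q}^n$). In the Grothendieck group $[\mathcal{E}(-1)]=\sum_j g_j[G_j]$ with $g_j=\sum_l(-1)^lf_{l,j}$, and taking ranks and first Chern classes gives $r=\sum_j g_j\rk G_j$ and $d-r=\sum_j g_j\,c_1(G_j)$, where every $G_j$ occurring has $c_1(G_j)\geqq0$; one wants to conclude $r\leqq d$. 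For the range $d\leqq1$ (which is all the paper needs; for~(2)(e) one takes the ruling of $\mathbb{Q}^2$ along which $\det\mathcal{E}$ has degree $\min\{a,b\}$) this is immediate: $d=0$ gives $\mathcal{E}(-1)\cong\mathcal{O}^{\oplus f_{0,0}}$, forcing $d=r$; $d=1$ gives $0\to\mathcal{O}^{\oplus f_{1,0}}\to\mathcal{O}^{\oplus f_{0,0}}\oplus\mathcal{S}^{\oplus f_{0,1}}\to\mathcal{E}(-1)\to0$ (with both spinor bundles in the middle term on even $\mathbb{Q}^n$), whence $1-r=c_1(\mathcal{E}(-1))=f_{0,1}\,c_1(\mathcal{S})\geqq0$ forces $r\leqq1$; either way $d<r$ is contradicted.

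\textbf{Main obstacle.} The delicate step is the last one for general $d$ on the hyperquadric: controlling the signs of the alternating sums $g_j=\sum_l(-1)^lf_{l,j}$. I expect this to require identifying $g_j$ with a signed dimension of cohomology of $\mathcal{E}(-1)$ twisted by a member of the dual exceptional collection — exactly where the cohomology of the spinor bundles must enter, and where the naive ``globally generated with no trivial quotient'' argument available on $\mathbb{P}^n$ genuinely breaks down. For the small $d$ actually used in the classification this difficulty does not arise.
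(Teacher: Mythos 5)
Your treatment of (1)(a), (2)(a), (2)(b) and (2)(c) is correct and is essentially the paper's own argument (restriction to a line plus the bound on degrees of direct summands of a nef bundle; for (2)(c) the vanishing of $\Hom(\mathcal{S}^+,\mathcal{S}^-)$), and your proof of (1)(b) via Castelnuovo--Mumford regularity is a legitimate variant. The genuine gap is in (2)(d) and (2)(e): the proposition asserts $d_{\min}\geqq 0$ whenever $d<r$ (resp.\ $\min\{a,b\}<r$) with no restriction on $d$, but on the hyperquadric you only carry out the argument for $d\leqq 1$ and explicitly leave general $d$ as an unresolved ``main obstacle'' (controlling the signs of the alternating sums $g_j$). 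Remarking that the classification theorems only invoke the case $d\leqq 1$ does not discharge the statement as it stands, so as written the proposal does not prove (2)(d) or (2)(e).

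Moreover the obstacle you flag is not there, and your claim that the global-generation argument ``genuinely breaks down'' on $\mathbb{Q}^n$ is where you went astray. There is no need for Castelnuovo--Mumford regularity (which indeed is not directly available from the quadric collection) nor for any Grothendieck-group bookkeeping: the standard resolution of $\mathcal{E}(d_{\min})$ ends with a surjection $\mathcal{P}_0\to\mathcal{E}(d_{\min})$, and $\mathcal{P}_0$ is a direct sum of copies of $\mathcal{O}(j)$ with $j\geqq 0$ and of spinor bundles, all of which are spanned in Kapranov's convention; hence $\mathcal{E}(d_{\min})$ is globally generated, whatever the value of $d_{\min}$. Now restrict to a line $L$ (for (2)(e), a line of the ruling of $\mathbb{Q}^2$ on which $\det\mathcal{E}$ has degree $\min\{a,b\}$): since $\mathcal{E}$ is nef, $\mathcal{E}|_L\cong\bigoplus_i\mathcal{O}_L(a_i)$ with all $a_i\geqq 0$ and $\sum_i a_i=d<r$, so some $a_i=0$; thus $\mathcal{E}|_L(d_{\min})$ has a direct summand of degree exactly $d_{\min}$, and its global generation forces $d_{\min}\geqq 0$. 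This is exactly the paper's proof, it works uniformly for every $d<r$ on $\mathbb{P}^n$, on $\mathbb{Q}^n$ with $n\geqq 3$, and on $\mathbb{Q}^2$, and it makes the alternating-sum analysis you describe unnecessary.
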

\begin{proof}
The proofs of (1) (a), (2) (a), and (2) (b) are essentially the same; the only difference comes from the fact
that there exists $\mathcal{S}$ or a pair of $\mathcal{S}^+$ and $\mathcal{S}^-$ between $\mathcal{O}$ and $\mathcal{O}(1)$,
so that the numbering of $G_j$ differs.
For simplicity, we only write explicitly the proof of (1) (a), but the reader will easily modify the proof for (2) (a), and (2) (b).

(1) (a) Suppose that 
$e_{0,j}\neq 0$ for some $j> d+d_{\min}$.
Then we have a non-zero map $\mathcal{O}(j)\to \mathcal{E}(d_{\min})$,
which gives a non-zero map $\mathcal{O}_L(j)\to \mathcal{E}|_L(d_{\min})$
for a general line $L$ in $\mathbb{P}^n$.
This implies that the maximal degree of a direct
summand of $\mathcal{E}|_L(d_{\min})$
is at least $j$.
On the other hand, since $\mathcal{E}$ is nef, 
the maximal degree of a direct
summand of $\mathcal{E}|_L(d_{\min})$ is at most $d+d_{\min}$.
This is a contradiction, since $j> d+d_{\min}$.

(2) (c) This is because 
$e_{l+1,1}=\sum_{1<k}e_{l,k}\dim \Hom(G_1,G_k)=e_{l,2}\dim \Hom(\mathcal{S}^+,\mathcal{S}^-)$
and $\Hom(\mathcal{S}^+,\mathcal{S}^-)=0$.

The proofs of (1) (b), (2) (d), and (2) (e) are essentially the same; 
For simplicity, we only write explicitly the proof of (2) (d).

(2) (d) We have a surjection $\mathcal{P}_0\to \mathcal{E}(d_{\min})$.
Since $\mathcal{P}_0$ is globally generated,
the restriction $\mathcal{E}|_L(d_{\min})$ to a line $L$ in $\mathbb{Q}^n$ is also globally generated.
Hence the minimal degree of a direct summand of $\mathcal{E}|_L(d_{\min})$ is non-negative.
Note here that the minimal degree of a direct summand 
of $\mathcal{E}|_L(d_{\min})$ 
is $d_{\min}$
since $r>d$.
Therefore $d_{\min}\geqq 0$.
\end{proof}

\section{An upper bound for $d_{\min}$}\label{UpperBound}
Let $X$, $G_0,\dots,G_m$,
$G$,
$\mathcal{E}$, $\mathcal{O}_X(1)$, and $d_{\min}$ be as in 
\S~\ref{Preliminaries}.
Assume that $\mathcal{E}$ be a nef vector bundle of rank $r$ as in \S~\ref{easyconstraints}.
In this section, we assume that the base field $K$ is of characteristic zero,
and we give an upper bound for $d_{\min}$ in the cases (1), (2), and (3) as described in \S~\ref{easyconstraints}.
Let $\mathcal{O}_X(1)$, $d$, and $(a,b)$ be as in \S~\ref{easyconstraints}.

Let $\pi:\mathbb{P}(\mathcal{E})\to X$ be the projection, and let $H(\mathcal{E})$ be the tautological line bundle
on $\mathbb{P}(\mathcal{E})$.

In Lemma~\ref{KodairaOnProjectiveSpace} below,
in order to unify and shorten descriptions in $\mathbb{P}^n$ and $\mathbb{Q}^n$ ($n\geqq 3$),
we 
denote by $c_1(X)$ the integer corresponding to $c_1(X)$
via the isomorphism $\Pic X\cong \mathbb{Z}$ which sends the ample generator to $1$.
However, needless to say, we must not use this abuse of notation in intersection formulas.

\begin{lemma}\label{KodairaOnProjectiveSpace}
Let $X$ be a smooth Fano variety of dimension $n$ with $\Pic X\cong \mathbb{Z}$,
and let $\mathcal{O}_X(1)$ be the ample generator of $\Pic X$.
Then we have the following vanishing.
\begin{enumerate}
\item[(1)] $\Ext^q(\mathcal{O}(j), \mathcal{E}(d))=0$ for all $q>0$ and $j< c_1(X)$.
\item[(2)] $\Ext^q(\mathcal{O}(c_1(X)), \mathcal{E}(d))=0$ for all $q>0$ if $H(\mathcal{E})^{n+r-1}>0$.
If $n=2$ then the condition $H(\mathcal{E})^{n+r-1}>0$ is equivalent to the one that $c_1(\mathcal{E})^2-c_2(\mathcal{E})>0$.
\end{enumerate}
\end{lemma}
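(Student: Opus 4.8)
The plan is to prove this as a vanishing statement for $H^q(X, \mathcal{E}(d) \otimes \mathcal{O}(-j))$ (equivalently $\Ext^q(\mathcal{O}(j), \mathcal{E}(d))$ by Serre duality-free identification, since $\mathcal{O}(j)$ is locally free) by reducing to the Kawamata--Viehweg (or Kodaira) vanishing theorem applied on the projectivization $\mathbb{P}(\mathcal{E})$. The key idea is that nefness of $\mathcal{E}$ translates into nefness of $H(\mathcal{E})$ on $\mathbb{P}(\mathcal{E})$, and that cohomology of twists of $\mathcal{E}$ on $X$ can be computed as cohomology of line bundles on $\mathbb{P}(\mathcal{E})$ via the projection formula $\pi_* (\mathcal{O}(1)^{\otimes s} \otimes \pi^*L) = S^s(\mathcal{E}) \otimes L$ and, more usefully here, $R^q\pi_* H(\mathcal{E}) = 0$ for $q > 0$ together with $\pi_* H(\mathcal{E}) = \mathcal{E}$, so that $H^q(X, \mathcal{E}(d-j)) \cong H^q(\mathbb{P}(\mathcal{E}), H(\mathcal{E}) \otimes \pi^*\mathcal{O}_X(d-j))$.

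First I would set up the Leray spectral sequence / projection formula identification so that the statement becomes: $H^q(\mathbb{P}(\mathcal{E}), H(\mathcal{E}) + \pi^*\mathcal{O}_X(d-j)) = 0$ for $q>0$. Writing $K_{\mathbb{P}(\mathcal{E})} = -r H(\mathcal{E}) + \pi^*(K_X + \det\mathcal{E})$, the line bundle whose higher cohomology we want to kill is $L := H(\mathcal{E}) + \pi^* \mathcal{O}_X(d-j)$, and we compare it to $K_{\mathbb{P}(\mathcal{E})}$: $L - K_{\mathbb{P}(\mathcal{E})} = (r+1)H(\mathcal{E}) + \pi^*(\mathcal{O}_X(d-j) - K_X - \det\mathcal{E}) = (r+1)H(\mathcal{E}) + \pi^*\mathcal{O}_X(-j + c_1(X) - 1)$, using $\det\mathcal{E} = \mathcal{O}_X(d)$ and $-K_X = \mathcal{O}_X(c_1(X))$ (here I use the $c_1(X)$-as-integer convention introduced just before the lemma). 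For part (1), $j < c_1(X)$ means $j \leq c_1(X) - 1$, so the twisting coefficient $-j + c_1(X) - 1 \geq 0$; hence $\pi^*\mathcal{O}_X(-j+c_1(X)-1)$ is nef, $H(\mathcal{E})$ is nef (as $\mathcal{E}$ is nef), so $L - K_{\mathbb{P}(\mathcal{E})}$ is nef, and it is moreover big because $(r+1)H(\mathcal{E})$ contributes positively — but I must check bigness carefully, since $H(\mathcal{E})^{n+r-1}$ could vanish. Here the point is that $(r+1)H(\mathcal{E}) + (\text{nef } \pi^*\text{-part})$ has self-intersection (top power $= n+r-1$) bounded below by a positive combination including $H(\mathcal{E})^{n+r-2}\cdot \pi^*\mathcal{O}_X(1)$ type terms, which are positive since $\mathcal{O}_X(1)$ is ample and $H(\mathcal{E})$ is nef and $\pi$-ample; I would expand the top self-intersection and isolate a strictly positive term. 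Then Kawamata--Viehweg gives the vanishing.

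For part (2), with $j = c_1(X)$, the coefficient $-j + c_1(X) - 1 = -1$, so $L - K_{\mathbb{P}(\mathcal{E})} = (r+1)H(\mathcal{E}) - \pi^*\mathcal{O}_X(1)$. This is still nef provided $H(\mathcal{E}) - \pi^*\mathcal{O}_X(1)$... no — that is not automatic; instead I note $(r+1)H(\mathcal{E}) - \pi^*\mathcal{O}_X(1) = r H(\mathcal{E}) + (H(\mathcal{E}) - \pi^*\mathcal{O}_X(1))$, and since $\mathcal{E}$ nef is equivalent to $H(\mathcal{E})$ nef, and in fact one checks $H(\mathcal{E})$ nef forces $(r+1)H(\mathcal{E}) - \pi^* \mathcal{O}_X(1)$ to be nef by a restriction-to-fibres-and-sections argument, or more simply: it is the pullback issue — actually the cleanest route is that this line bundle equals $-K_{\mathbb{P}(\mathcal{E})} + \pi^*(\mathcal{O}_X(d) + K_X + \det\mathcal{E})$-type rewriting is circular, so I would instead argue directly that $(r+1)H(\mathcal{E}) - \pi^*\mathcal{O}_X(1)$ is nef since its restriction to every fibre of $\pi$ is $\mathcal{O}_{\mathbb{P}^{r-1}}(r+1)$ and it dominates the nef class $H(\mathcal{E})$ appropriately — I expect this to need the hypothesis, and this is where bigness, i.e. the condition $H(\mathcal{E})^{n+r-1} > 0$, enters: it guarantees $L - K_{\mathbb{P}(\mathcal{E})}$ is big. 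Granting nefness, bigness follows from $H(\mathcal{E})^{n+r-1}>0$ by expanding $((r+1)H(\mathcal{E}) - \pi^*\mathcal{O}_X(1))^{n+r-1}$ and using that the leading term $(r+1)^{n+r-1}H(\mathcal{E})^{n+r-1}$ dominates. Then Kawamata--Viehweg vanishing on $\mathbb{P}(\mathcal{E})$ finishes it. Finally, for $n=2$, I would compute $H(\mathcal{E})^{r+1}$ on the $(r+1)$-dimensional variety $\mathbb{P}(\mathcal{E})$ via the Segre class relation / Grothendieck relation $\sum \pi^* c_i(\mathcal{E}) \cdot (-H(\mathcal{E}))^{... }$, obtaining $H(\mathcal{E})^{r+1} = c_1(\mathcal{E})^2 - c_2(\mathcal{E})$ after using $\mathcal{O}_X(1)^2 = 1$, establishing the stated equivalence.

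**The main obstacle** will be the nefness (not just bigness) of $L - K_{\mathbb{P}(\mathcal{E})}$ in case (2): showing $(r+1)H(\mathcal{E}) - \pi^*\mathcal{O}_X(1)$ is nef when $\mathcal{E}$ is merely nef (so that we may invoke Kawamata--Viehweg at all), and carefully extracting bigness from the single hypothesis $H(\mathcal{E})^{n+r-1} > 0$ without accidentally needing ampleness of $\mathcal{E}$. The bookkeeping with the $c_1(X)$-as-integer abuse of notation versus genuine intersection numbers on $\mathbb{P}(\mathcal{E})$ also requires care, exactly as the remark preceding the lemma warns.
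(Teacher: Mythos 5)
Your overall strategy is the paper's: identify $\Ext^q(\mathcal{O}(j),\mathcal{E}(d))$ with $H^q(X,\mathcal{E}(d-j))$, lift to $H^q(\mathbb{P}(\mathcal{E}),H(\mathcal{E})+\pi^*\mathcal{O}_X(d-j))$, compare with $K_{\mathbb{P}(\mathcal{E})}$, and apply Kodaira or Kawamata--Viehweg. But there is a genuine gap, and it is an arithmetic one in the adjunction computation that then infects both parts. Writing $K_{\mathbb{P}(\mathcal{E})}=-rH(\mathcal{E})+\pi^*(K_X+\det\mathcal{E})$ and $\det\mathcal{E}\cong\mathcal{O}_X(d)$, the twist by $d$ and $\det\mathcal{E}$ cancel exactly, so
$H(\mathcal{E})+\pi^*\mathcal{O}_X(d-j)-K_{\mathbb{P}(\mathcal{E})}\cong(r+1)H(\mathcal{E})+\pi^*\mathcal{O}_X(c_1(X)-j)$,
with coefficient $c_1(X)-j$, not $c_1(X)-j-1$ as you wrote. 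With your coefficient, your plan cannot be completed. In part (1), at the boundary case $j=c_1(X)-1$ your reduction leaves you with $(r+1)H(\mathcal{E})$ alone, and your proposed bigness argument (``isolate a strictly positive term in the top self-intersection'') has no positive term to isolate: for $\mathcal{E}=\mathcal{O}^{\oplus r}$ one has $H(\mathcal{E})^{n+r-1}=0$ and there is no $\pi^*$-part, yet the vanishing of part (1) must hold for this $\mathcal{E}$ with no bigness hypothesis. In part (2), your reduction requires nefness of $(r+1)H(\mathcal{E})-\pi^*\mathcal{O}_X(1)$, which you correctly suspect is problematic and never establish; in fact it is false in general even under the hypothesis $H(\mathcal{E})^{n+r-1}>0$. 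For example, $\mathcal{E}=\mathcal{O}\oplus\mathcal{O}(1)$ on $\mathbb{P}^2$ has $c_1^2-c_2=1>0$, but on the section of $\pi$ determined by the quotient $\mathcal{E}\to\mathcal{O}$ the bundle $3H(\mathcal{E})-\pi^*\mathcal{O}(1)$ restricts to $\mathcal{O}_{\mathbb{P}^2}(-1)$, so Kawamata--Viehweg is not applicable in the form you need, and the hypothesis supplies bigness only, never this nefness.

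Once the coefficient is corrected, the difficulties you flag as the ``main obstacle'' disappear and the argument becomes the paper's: in (1) one gets $(r+1)H(\mathcal{E})+\pi^*\mathcal{O}_X(c_1(X)-j)$ with $c_1(X)-j\geqq 1$, which is ample (nef and $\pi$-ample plus the pullback of an ample line bundle, e.g.\ by Nakai--Moishezon), so plain Kodaira vanishing suffices and no bigness discussion is needed at all; in (2), $j=c_1(X)$ gives exactly $(r+1)H(\mathcal{E})$, which is nef since $\mathcal{E}$ is nef and big precisely by the hypothesis $H(\mathcal{E})^{n+r-1}>0$, so Kawamata--Viehweg applies directly. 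Your final remark for $n=2$, computing $H(\mathcal{E})^{r+1}=c_1(\mathcal{E})^2-c_2(\mathcal{E})$ from the Grothendieck relation, is correct and agrees with the paper.
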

\begin{proof}
(1)  We have isomorphisms 
\[\Ext^q(\mathcal{O}(j),\mathcal{E}(d))
\cong H^q(X,\mathcal{E}(d-j))
\cong H^q(\mathbb{P}(\mathcal{E}), H(\mathcal{E})+\pi^*\mathcal{O}_{X}(d-j)).\]
We claim that the last cohomology group vanishes by the Kodaira vanishing theorem;
indeed,
since $-K_X\cong \mathcal{O}_X(c_1(X))$,
we have 
\[H(\mathcal{E})+\pi^*\mathcal{O}_{X}(d-j)-K_{\mathbb{P}(\mathcal{E})}
\cong (r+1)H(\mathcal{E})+\pi^*\mathcal{O}_{X}(c_1(X)-j),\]
and $(r+1)H(\mathcal{E})+\pi^*\mathcal{O}_{X}(c_1(X)-j)$ is ample by the Nakai-Moishezon criterion, since $j<c_1(X)$.
Therefore the claim follows.

(2) If $j=c_1(X)$, then 
$H(\mathcal{E})+\pi^*\mathcal{O}_{\mathbb{P}^n}(d-j)-K_{\mathbb{P}(\mathcal{E})}$ 
is isomorphic to $(r+1)H(\mathcal{E})$, and this is nef and big by assumption.
The result then follows from the Kawamata-Viehweg vanishing theorem.
The assertion for $n=2$ follows from  $H(\mathcal{E})^{r+1}=c_1(\mathcal{E})^2-c_2(\mathcal{E})$ if $n=2$.
\end{proof}

\begin{cor}\label{ProSpaceResol}
Let $\mathcal{E}$ be a nef vector bundle of rank $r$ on $\mathbb{P}^n$.
Then $d_{\min}\leqq d$.
Moreover if $H(\mathcal{E})^{n+r-1}>0$ then $d_{\min}< d$.
In particular if $n=2$ and $c_1(\mathcal{E})^2-c_2(\mathcal{E})>0$,
then $d_{\min}< d$.
\end{cor}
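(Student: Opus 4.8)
The statement to prove is Corollary~\ref{ProSpaceResol}, which asserts that for a nef vector bundle $\mathcal{E}$ of rank $r$ on $\mathbb{P}^n$ one has $d_{\min}\leqq d$, with strict inequality when $H(\mathcal{E})^{n+r-1}>0$ (and in particular when $n=2$ and $c_1(\mathcal{E})^2-c_2(\mathcal{E})>0$). The plan is to read off $d_{\min}$ directly from its definition as the least integer $d_{\min}$ with $\Ext^q(G,\mathcal{E}(d'))=0$ for all $q>0$ and all $d'\geqq d_{\min}$, where $G=\bigoplus_{j=0}^{n}\mathcal{O}(j)$, and to bound it using the vanishing already established in Lemma~\ref{KodairaOnProjectiveSpace}, applied with $X=\mathbb{P}^n$, so that $c_1(X)=n+1$ and the ample generator is $\mathcal{O}_X(1)=\mathcal{O}_{\mathbb{P}^n}(1)$.

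First I would observe that $\Ext^q(G,\mathcal{E}(d'))=\bigoplus_{j=0}^{n}\Ext^q(\mathcal{O}(j),\mathcal{E}(d'))$, so it suffices to control each summand. For $0\leqq j\leqq n$ we have $j\leqq n<n+1=c_1(\mathbb{P}^n)$, so Lemma~\ref{KodairaOnProjectiveSpace}(1) gives $\Ext^q(\mathcal{O}(j),\mathcal{E}(d'))=0$ for all $q>0$, \emph{provided} we are in a situation where that lemma applies with the twist $d'$ in place of $d$. The point is that Lemma~\ref{KodairaOnProjectiveSpace} is stated for the specific bundle $\mathcal{E}(d)$ with $\mathcal{O}_X(d)\cong\det\mathcal{E}$, but the proof only uses that $\mathcal{E}$ is nef; replacing $\mathcal{E}$ by $\mathcal{E}(d'-d)$, which is again nef whenever $d'\geqq d$, and noting that $\det(\mathcal{E}(d'-d))=\mathcal{O}(d'+(r-1)\cdot 0)\cdots$ — more simply, applying part (1) verbatim to the nef bundle $\mathcal{E}(d'-d)$ — yields $H^q(\mathbb{P}^n,\mathcal{E}(d'-j))=0$ for all $q>0$, $0\leqq j\leqq n$, $d'\geqq d$. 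Hence $\Ext^q(G,\mathcal{E}(d'))=0$ for all $q>0$ and all $d'\geqq d$, which is exactly the condition defining an upper bound, giving $d_{\min}\leqq d$.

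Next, for the strict inequality, I would take $d'=d-1$ and show $\Ext^q(G,\mathcal{E}(d-1))=0$ for all $q>0$ under the hypothesis $H(\mathcal{E})^{n+r-1}>0$ — equivalently, that $d_{\min}\leqq d-1$, hence $d_{\min}<d$. Here the critical summand is $j=n$, where the twist becomes $\mathcal{E}(d-1-n)=\mathcal{E}((d-n-1))$ and we are exactly at the borderline $j=c_1(\mathbb{P}^n)$ for the bundle $\mathcal{E}(d-1-d)=\mathcal{E}(-1)$, i.e. $K_{\mathbb{P}(\mathcal{E}(-1))}$-type considerations reduce to $(r+1)H(\mathcal{E})$ being nef and big; this is precisely Lemma~\ref{KodairaOnProjectiveSpace}(2), whose hypothesis is $H(\mathcal{E})^{n+r-1}>0$. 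For $0\leqq j\leqq n-1$ we still have $j<n+1$ so part (1) (applied to $\mathcal{E}(-1)$, which is nef only if $\mathcal{E}(-1)$ is nef — but here we do not need nefness of $\mathcal{E}(-1)$; rather we apply part (1) with the twist $d-1$, i.e. to the nef bundle $\mathcal{E}$ itself noting $d-1-j> d-1-n\geqq -n$ and redoing the Nakai–Moishezon check with $(r+1)H(\mathcal{E})+\pi^*\mathcal{O}(c_1(X)-1-j+\text{shift})$) handles those terms. Concretely it is cleanest to invoke Lemma~\ref{KodairaOnProjectiveSpace} once more with $\mathcal{E}$ replaced by $\mathcal{E}(-1)$: for $j<n+1$ part~(1) applies, and for $j=n+1$... but $j$ only ranges up to $n$, so in fact every summand with $0\leqq j\leqq n$ is covered by part~(1) applied to $\mathcal{E}(-1)$ \emph{except} that $\mathcal{E}(-1)$ need not be nef; so instead one directly checks: for $j\leqq n-1$ use part~(1) for $\mathcal{E}$ with twist $d-1$ (valid since $\mathcal{O}(d-1)\ne\det\mathcal{E}$ is harmless — the proof needs only nefness), and for $j=n$ use part~(2) for $\mathcal{E}$ with twist $d-1$, whose ampleness/bigness input is exactly $H(\mathcal{E})^{n+r-1}>0$. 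The $n=2$ statement then follows from the last sentence of Lemma~\ref{KodairaOnProjectiveSpace}(2), where $H(\mathcal{E})^{n+r-1}=H(\mathcal{E})^{r+1}=c_1(\mathcal{E})^2-c_2(\mathcal{E})$.

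\textbf{Main obstacle.} The only real subtlety is bookkeeping: Lemma~\ref{KodairaOnProjectiveSpace} is phrased for $\mathcal{E}(d)$, so I must be careful that what is genuinely used is only the nefness of $\mathcal{E}$ together with the numerical positions of the twists $d'-j$ relative to $c_1(\mathbb{P}^n)=n+1$, and that the borderline case $j=n$ at twist $d-1$ is precisely the case $j=c_1(X)$ of part~(2). Once that identification is made explicit, the proof is a one-line deduction from Lemma~\ref{KodairaOnProjectiveSpace} and the definition of $d_{\min}$; there is no hard estimate to grind through.
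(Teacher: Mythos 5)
Your argument is correct and is essentially the paper's own: Corollary~\ref{ProSpaceResol} is stated there without a separate proof precisely because it follows from Lemma~\ref{KodairaOnProjectiveSpace} by the summand-by-summand deduction you give ($\Ext^q(\mathcal{O}(j),\mathcal{E}(d'))\cong\Ext^q(\mathcal{O}(j-(d'-d)),\mathcal{E}(d))$, with part (1) covering every index $j-(d'-d)<n+1$ for $d'\geqq d$, part (2) covering the borderline summand $j=n$ at $d'=d-1$, and the $n=2$ claim being the last sentence of part (2)). Just note that your first, discarded formulation --- applying part (1) \emph{verbatim} to the nef bundle $\mathcal{E}(d'-d)$ --- would not do, since twisting changes the determinant degree to $d+r(d'-d)$ and so gives vanishing at the wrong twists; the version you settle on (absorb the shift into $j$, which Lemma~\ref{KodairaOnProjectiveSpace}(1) permits because it allows any integer $j<c_1(X)$, or equivalently observe that its proof only needs nefness of $\mathcal{E}$ and the position of the twist relative to $d-c_1(X)$) is the correct one.
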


\begin{lemma}\label{KodairaOnQuadric}
Suppose that $X$ is a smooth hyperquadric $\mathbb{Q}^n$ of dimension $n\geqq 3$.
Then 
$\Ext^q(\mathcal{S},\mathcal{E}(d+j))=0$ for all $q>0$ and $j\geqq -\lfloor\frac{n}{2}\rfloor+1$,
where $\mathcal{S}$ is a 
spinor bundle on $\mathbb{Q}^n$.
\end{lemma}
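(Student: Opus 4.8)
We first reduce to a cohomology vanishing. Since $\mathcal{S}$ is locally free, $\Ext^q(\mathcal{S},\mathcal{E}(d+j))\cong H^q(\mathbb{Q}^n,\mathcal{S}^{\vee}\otimes\mathcal{E}(d+j))$, so it suffices to show that the latter vanishes for every $q>0$ as soon as $j\geqq-\lfloor n/2\rfloor+1$. The plan is to strip off the spinor bundle by means of Ottaviani's fundamental exact sequence and to reduce the problem to (Serre duals of) the Kodaira-type vanishing already obtained in Lemma~\ref{KodairaOnProjectiveSpace}.

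Because $\mathcal{S}$ is globally generated, the evaluation morphism fits into an exact sequence $0\to\mathcal{S}^{\vee}\to\mathcal{O}^{\oplus N}\to\mathcal{S}'\to0$ on $\mathbb{Q}^n$, where $N=\dim H^0(\mathcal{S})$ and $\mathcal{S}'$ equals $\mathcal{S}$ if $n$ is odd and the conjugate spinor bundle if $n$ is even. Twisting and splicing, one obtains for every $s\geqq1$ a finite resolution of $\mathcal{S}$ by sums of line bundles with a twisted spinor bundle as its tail:
\[0\to\mathcal{S}\to\mathcal{O}(1)^{\oplus N}\to\mathcal{O}(2)^{\oplus N}\to\dots\to\mathcal{O}(s)^{\oplus N}\to\mathcal{S}(s)\to0,\]
the $\pm$ spinor bundles alternating when $n$ is even. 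Now apply Serre duality on $\mathbb{Q}^n$ (where $\omega_{\mathbb{Q}^n}\cong\mathcal{O}(-n)$): $H^q(\mathbb{Q}^n,\mathcal{S}^{\vee}\otimes\mathcal{E}(d+j))\cong H^{n-q}(\mathbb{Q}^n,\mathcal{S}\otimes\mathcal{E}^{\vee}(-d-j-n))^{\vee}$. The point is that it is $\mathcal{S}$, not $\mathcal{S}^{\vee}$, that is globally generated, so the displayed resolution may be tensored with $\mathcal{E}^{\vee}(-d-j-n)$; feeding the result into the hypercohomology spectral sequence reduces the vanishing of $H^{n-q}$ to the vanishing of suitable groups $H^i(\mathbb{Q}^n,\mathcal{E}^{\vee}(u))$ and of a single group $H^0(\mathbb{Q}^n,\mathcal{S}(s)\otimes\mathcal{E}^{\vee}(t))$ coming from the tail.

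Each group of the first kind is a Serre dual of $H^{n-i}(\mathbb{Q}^n,\mathcal{E}(-u-n))$, which vanishes by Lemma~\ref{KodairaOnProjectiveSpace}(1) (applied with $\Pic\mathbb{Q}^n\cong\mathbb{Z}$ and $c_1(\mathbb{Q}^n)=n$) once $1\leqq n-i\leqq n-1$ and $-u-n>d-n$, while $H^0(\mathbb{Q}^n,\mathcal{E}^{\vee}(u))=0$ for $u<0$ because $\mathcal{E}$ is nef. For the tail group one restricts to a general line $L\subset\mathbb{Q}^n$: since $\mathcal{S}$ is globally generated, $\mathcal{S}|_L$ is a sum of $\mathcal{O}(a_i)$ with $a_i\geqq0$, while $\mathcal{E}^{\vee}|_L$ is a sum of $\mathcal{O}(-b_i)$ with $b_i\geqq0$ by nefness, so $\mathcal{S}(s)\otimes\mathcal{E}^{\vee}(t)|_L$ has no sections as soon as $s+t$ drops below an explicit threshold, whence $H^0(\mathbb{Q}^n,\mathcal{S}(s)\otimes\mathcal{E}^{\vee}(t))=0$; choosing $s=n-q$ keeps this tail in cohomological degree $0$, so only its $H^0$ intervenes. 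Assembling these thresholds — and running the same argument one dimension at a time, restricting $\mathcal{S}^{\vee}\otimes\mathcal{E}$ to a hyperplane section $\mathbb{Q}^{n-1}$, on which $\mathcal{S}$ restricts to a (sum of) spinor bundle(s) and $\mathcal{E}$ stays nef, in order to sharpen the constant — yields $j\geqq-\lfloor n/2\rfloor+1$; the base of the induction is $\mathbb{Q}^3$, where the Ottaviani sequence and Serre duality alone already give $j\geqq0=-\lfloor3/2\rfloor+1$.

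The delicate point is the degree $q=1$: there the spectral sequence forces one to vanish $H^{n-1}(\mathbb{Q}^n,\mathcal{E}(m))$ for $m$ as small as possible, and Lemma~\ref{KodairaOnProjectiveSpace}(1) guarantees this only for $m\geqq d-n+1$, so the gain over the trivial bound $j\geqq0$ must come entirely from the hyperplane-section induction together with a careful analysis of its boundary step (the surjectivity of the relevant restriction map on $H^0$, controlled via the Ottaviani sequence and the base case). The even-dimensional quadrics require running the whole argument with the pair $\mathcal{S}^+,\mathcal{S}^-$ and their two Ottaviani sequences in tandem; this is where the floor function — equivalently, the fact that the bound improves by only one unit for every two units of dimension — enters.
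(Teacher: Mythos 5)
Your opening reductions are fine (the identification $\Ext^q(\mathcal{S},\mathcal{E}(d+j))\cong H^q(\mathcal{S}^{\vee}\otimes\mathcal{E}(d+j))$, the Ottaviani-type sequences spliced into a line-bundle resolution, Serre duality, and killing the tail $H^0$ on a general line), but the argument stops short exactly where Lemma~\ref{KodairaOnQuadric} says something beyond the obvious. With the only vanishing inputs you allow yourself --- Lemma~\ref{KodairaOnProjectiveSpace}~(1), i.e.\ $H^{q'}(\mathcal{E}(m))=0$ for $q'>0$ and $m\geqq d-n+1$, plus nefness on lines --- the spectral-sequence bookkeeping yields the vanishing of $\Ext^q$ only for $j\geqq 1-q$, hence only $j\geqq 0$ at the critical degree $q=1$, as you yourself concede. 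For $n\geqq 4$ the asserted range $j\geqq -\lfloor n/2\rfloor+1$ is strictly larger, and the entire surplus is delegated to a ``hyperplane-section induction'' that is never executed. That induction does not close: restricting to $H\cong\mathbb{Q}^{n-1}$ and descending in the twist works for $q\geqq 2$ (the connecting map comes from $H^{q-1}$ of the restriction, which the inductive hypothesis kills), but at $q=1$ one needs the restriction map $H^0(\mathcal{S}^{\vee}\otimes\mathcal{E}(m))\to H^0\bigl((\mathcal{S}^{\vee}\otimes\mathcal{E}(m))|_H\bigr)$ to be surjective at precisely the twists where the vanishing is not yet known; that surjectivity is equivalent to the $H^1$-vanishing one is trying to establish at the next twist down, so the sketch is circular at its only load-bearing point. ``Controlled via the Ottaviani sequence and the base case'' is not an argument --- the Ottaviani sequences only reproduce the $j\geqq 1-q$ threshold already obtained.

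The reason these tools cannot reach $-\lfloor n/2\rfloor+1$ is that the extra positivity in the paper's proof comes from a geometric source your argument never accesses. After rewriting $\mathcal{S}^{\vee}\otimes\mathcal{E}(d+j)\cong\mathcal{S}'\otimes\mathcal{E}(d+j-1)$ via $\mathcal{S}^{\vee}\cong\mathcal{S}'(-1)$ (Theorem~\ref{Usefulottaviani}), the paper applies the Kodaira vanishing theorem on $\mathbb{P}(p^*\mathcal{E})$ over the flag manifold $\mathbb{P}(\mathcal{S}')$, using that $H(\mathcal{S}')\cong q^*\mathcal{O}_S(1)$ for the second contraction $q:\mathbb{P}(\mathcal{S}')\to S$ to the spinor variety, whose fibers are the maximal linear subspaces $\mathbb{P}^{\lfloor n/2\rfloor}\subset\mathbb{Q}^n$; the condition $j\geqq-\lfloor n/2\rfloor$ is exactly the positivity of $H(\mathcal{S}')+p^*\mathcal{O}(j)-K_{\mathbb{P}(\mathcal{S}')}$ along those fibers, i.e.\ the anticanonical degree of $\mathbb{P}^{\lfloor n/2\rfloor}$ is where the floor function comes from. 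Kodaira applied only on $\mathbb{P}(\mathcal{E})$ (Lemma~\ref{KodairaOnProjectiveSpace}) caps out at $m\geqq d-n+1$ uniformly in the cohomological degree, and no rearrangement of line-bundle resolutions and Serre duality can manufacture the missing vanishing of $H^{n-1}(\mathcal{E}(m))$ below that range; to salvage your route you would need an independent, strictly stronger vanishing statement, which is not among your ingredients.
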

\begin{proof}
We have an isomorphism $\Ext^q(\mathcal{S},\mathcal{E}(d+j))\cong 
H^q(\mathbb{Q}^n, \mathcal{S}^{\vee}\otimes\mathcal{E}(d+j))$.
By Theorem~\ref{Usefulottaviani} (1), (2), and (3),
to show $H^q(\mathbb{Q}^n,\mathcal{S}^{\vee}\otimes\mathcal{E}(d+j))=0$
for all $q>0$ and $j\geqq -\lfloor\frac{n}{2}\rfloor+1$ and 
a 
spinor bundle $\mathcal{S}$ on $\mathbb{Q}^n$,
it is enough to show that $H^q(\mathbb{Q}^n,\mathcal{S}\otimes\mathcal{E}(d+j))=0$
for all $q>0$ and $j\geqq -\lfloor\frac{n}{2}\rfloor$ and 
a 
spinor bundle $\mathcal{S}$ on $\mathbb{Q}^n$.
We have an isomorphism $H^q(\mathbb{Q}^n,\mathcal{S}\otimes\mathcal{E}(d+j))\cong 
H^q(\mathbb{P}(\mathcal{S}), H(\mathcal{S})\otimes p^*(\mathcal{E}(d+j)))$,
where $p:\mathbb{P}(\mathcal{S})\to \mathbb{Q}^n$ is the projection
and $H(\mathcal{S})$ is the tautological line bundle
on $\mathbb{P}(\mathcal{S})$.
Let $\tilde{\pi}:\mathbb{P}(p^*\mathcal{E})\to \mathbb{P}(\mathcal{S})$ be the projection.
Since $H(\mathcal{S})\otimes p^*(\mathcal{E}(d+j))\cong 
p^*\mathcal{E}\otimes H(\mathcal{S})\otimes p^*\mathcal{O}_{\mathbb{Q}^n}(d+j)$,
we have an isomorphism
\[H^q(\mathbb{P}(\mathcal{S}), H(\mathcal{S})\otimes p^*(\mathcal{E}(d+j)))
\cong
H^q(\mathbb{P}(p^*\mathcal{E}), H(p^*\mathcal{E})
+
\tilde{\pi}^*(H(\mathcal{S})
+
p^*\mathcal{O}_{\mathbb{Q}^n}(d+j))).\]
We claim here that the last cohomology group vanishes by the Kodaira vanishing theorem;
first observe that 
$H(p^*\mathcal{E})+
\tilde{\pi}^*(H(\mathcal{S})+ p^*\mathcal{O}_{\mathbb{Q}^n}(d+j))-K_{\mathbb{P}(p^*\mathcal{E})}
$ is isomorphic to 
\[(r+1)H(p^*\mathcal{E})+
\tilde{\pi}^*(H(\mathcal{S})+ p^*\mathcal{O}_{\mathbb{Q}^n}(j)-K_{\mathbb{P}(\mathcal{S})}).\]
To show the last line bundle is ample, it is enough by the Nakai-Moishezon criterion to show that 
$H(\mathcal{S})+ p^*\mathcal{O}_{\mathbb{Q}^n}(j)-K_{\mathbb{P}(\mathcal{S})}$ is ample.
To see this, recall that $\mathbb{P}(\mathcal{S})$ is a flag manifold
parameterizing flags of one-dimensional and maximal dimensional linear subspaces
of $\mathbb{Q}^n$; set $s=\lfloor \frac{n}{2}\rfloor$, and let $q:\mathbb{P}(\mathcal{S})\to S$ be the projection,
which is a $\mathbb{P}^{s}$-bundle, to the spinor variety $S$.
Recall also that $H(\mathcal{S})\cong q^*\mathcal{O}_S(1)$
for the ample generator $\mathcal{O}_S(1)$ of $\Pic S$ (see, e.g., \cite[\S 5]{MR3275418}).
We see that $\mathbb{P}(\mathcal{S})$ is a Fano manifold of Picard number two,
that $H(\mathcal{S})+ p^*\mathcal{O}_{\mathbb{Q}^n}(j)-K_{\mathbb{P}(\mathcal{S})}$ is $p$-ample,
and that it is $q$-ample if $j+s\geqq 0$. Therefore $H(\mathcal{S})+ p^*\mathcal{O}_{\mathbb{Q}^n}(j)-K_{\mathbb{P}(\mathcal{S})}$
is ample if $j\geqq -s$, and the claim follows.
\end{proof}

\begin{cor}\label{quadricResol3}
Let $\mathcal{E}$ be a nef vector bundle of rank $r$ on a smooth hyperquadric $\mathbb{Q}^n$
of dimension $n\geqq 3$.
Then $d_{\min}\leqq d$.
Moreover if $n\geqq 4$ and $H(\mathcal{E})^{n+r-1}>0$ then $d_{\min}< d$.
\end{cor}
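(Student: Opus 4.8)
The goal is to bound $d_{\min}$ from above when $\mathcal{E}$ is nef on $\mathbb{Q}^n$, $n\geqq 3$, using the full strong exceptional sequence $(\mathcal{O},\mathcal{S},\mathcal{O}(1),\dots,\mathcal{O}(n-1))$ in odd dimension (resp.\ $(\mathcal{O},\mathcal{S}^+,\mathcal{S}^-,\mathcal{O}(1),\dots,\mathcal{O}(n-1))$ in even dimension). By definition $d_{\min}$ is the least integer so that $\Ext^q(G,\mathcal{E}(d'))=0$ for all $q>0$ and all $d'\geqq d_{\min}$, where $G$ is the direct sum of the terms of the sequence. Since $\Ext^q$ is additive in $G$, it suffices to check the vanishing on each summand separately: the $\mathcal{O}(k)$-summands for $0\leqq k\leqq n-1$, and the spinor summand(s). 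The plan is to show that $d'=d$ already kills all higher $\Ext$ from every summand, which gives $d_{\min}\leqq d$; then a Kawamata--Viehweg refinement handles the strict inequality under the bigness hypothesis.

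First I would dispose of the line-bundle summands. For each $k$ with $0\leqq k\leqq n-1$ we must show $\Ext^q(\mathcal{O}(k),\mathcal{E}(d'))=0$ for all $q>0$ and all $d'\geqq d$. Since $c_1(\mathbb{Q}^n)=n$ and $k\leqq n-1<n=c_1(X)$, this is exactly the content of Lemma~\ref{KodairaOnProjectiveSpace}~(1) applied with $X=\mathbb{Q}^n$ (the lemma is stated for a smooth Fano variety with $\Pic X\cong\mathbb{Z}$, which covers $\mathbb{Q}^n$ for $n\geqq 3$). One checks this for $d'=d$ and notes that the same Nakai--Moishezon argument in that lemma's proof works verbatim for any $d'\geqq d$ — replacing $d$ by $d'$ only increases the twist $\pi^*\mathcal{O}_X(c_1(X)-k)$ contribution in a way that keeps $(r+1)H(\mathcal{E})+\pi^*\mathcal{O}_X(\,\cdot\,)$ ample, since $H(\mathcal{E})$ is nef and the twist $\pi^*\mathcal{O}_X(d'-d)$ is nef. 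Hence every $\mathcal{O}(k)$-summand contributes no obstruction at level $d'\geqq d$.

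Next I would treat the spinor summand(s). We need $\Ext^q(\mathcal{S},\mathcal{E}(d'))=0$ for all $q>0$ and $d'\geqq d$ (and likewise for $\mathcal{S}^\pm$ in even dimension, since both spinor bundles satisfy the same cohomological estimates). This is where Lemma~\ref{KodairaOnQuadric} enters: it gives $\Ext^q(\mathcal{S},\mathcal{E}(d+j))=0$ for all $q>0$ and all $j\geqq -\lfloor n/2\rfloor+1$. Since $n\geqq 3$ we have $-\lfloor n/2\rfloor+1\leqq 0$, so in particular $j=0$ is allowed, giving the vanishing at $d'=d$; and for $d'>d$ we take $j=d'-d>0$, which is certainly $\geqq -\lfloor n/2\rfloor+1$, so the vanishing persists for all $d'\geqq d$. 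Combining the line-bundle case and the spinor case, $\Ext^q(G,\mathcal{E}(d'))=0$ for all $q>0$ and all $d'\geqq d$, whence $d_{\min}\leqq d$ by minimality.

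Finally, for the strict inequality when $n\geqq 4$ and $H(\mathcal{E})^{n+r-1}>0$, I would push the line-bundle estimate one step further. It suffices to show $\Ext^q(G,\mathcal{E}(d-1))=0$ for all $q>0$, for then $d_{\min}\leqq d-1<d$. The spinor summand is unproblematic: with $j=-1$ we still have $-1\geqq -\lfloor n/2\rfloor+1$ precisely when $n\geqq 4$, so Lemma~\ref{KodairaOnQuadric} applies. For the line-bundle summands $\mathcal{O}(k)$ with $k\leqq n-2$, the twist $c_1(X)-k-1\geqq 0$ and the Nakai--Moishezon argument of Lemma~\ref{KodairaOnProjectiveSpace}~(1) still yields ampleness, so those vanish too. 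The remaining case is $k=n-1$ (and, in the odd case, this is literally $\mathcal{O}(c_1(X)-1)$): here $\Ext^q(\mathcal{O}(n-1),\mathcal{E}(d-1))\cong H^q(\mathbb{P}(\mathcal{E}),(r+1)H(\mathcal{E})+\pi^*\mathcal{O}_X(0))=H^q(\mathbb{P}(\mathcal{E}),(r+1)H(\mathcal{E}))$ after the adjunction computation, and $(r+1)H(\mathcal{E})$ is nef (as $\mathcal{E}$ is nef) and big precisely because $H(\mathcal{E})^{n+r-1}>0$; Kawamata--Viehweg then gives the vanishing, exactly as in Lemma~\ref{KodairaOnProjectiveSpace}~(2). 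I expect the only real subtlety to be bookkeeping the twist indices so that $k=n-1$ lines up with the $c_1(X)$-boundary case of the two lemmas and that the constraint $n\geqq 4$ is exactly what makes the spinor estimate survive the extra twist by $\mathcal{O}(-1)$; everything else is a direct appeal to the two vanishing lemmas already established.
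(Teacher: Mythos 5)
Your proposal is correct and follows essentially the same route as the paper, whose proof of this corollary is precisely the combination of Lemma~\ref{KodairaOnProjectiveSpace} (for the summands $\mathcal{O}(k)$, $0\leqq k\leqq n-1$, with the Kawamata--Viehweg case of part (2) covering the boundary twist at $c_1(X)=n$ under the bigness hypothesis) and Lemma~\ref{KodairaOnQuadric} (for the spinor summand(s), where the constraint $j\geqq -\lfloor n/2\rfloor+1$ is exactly what forces $n\geqq 4$ when $j=-1$). Your only slip is writing $c_1(X)-k-1\geqq 0$ for $k\leqq n-2$ where the ampleness argument needs (and in fact has) $\geqq 1$; since you treat $k=n-1$ separately via Kawamata--Viehweg, this is harmless.
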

\begin{proof}
This follows from Lemmas~\ref{KodairaOnProjectiveSpace} and \ref{KodairaOnQuadric}.
\end{proof}

Finally we deal with the case $X=\mathbb{Q}^2$.
We have $\Ext^q(G,\mathcal{E}(d',d'))=0$ for $d'\gg 0$ and $q>0$ 
by Serre's vanishing theorem.
Note that if $(d_1,d_2)$ is a pair of integers such that $\Ext^q(G,\mathcal{E}(d_1,d_2))=0$ for all $q>0$,
then $\mathcal{E}(d_1,d_2)$ has the standard resolution with respect to $\mathcal{O}$, $\mathcal{O}(1,0)$,
$\mathcal{O}(0,1)$, and $\mathcal{O}(1,1)$, which implies that $\Ext^q(G,\mathcal{E}(d_1',d_2'))=0$ for all $q>0$,
all $d_1'\geqq d_1$, and all $d_2'\geqq d_2$.
Then we define a pair $(d_{1,\min},d_{2,\min})$ of integers
by the following property:
\[
\begin{split}
\Ext^q(G,\mathcal{E}(d_1',d_2'))&=0\textrm{ for all }q>0,\textrm{ all }d_1'\geqq d_{1,\min},\textrm{ and all }d_2'\geqq d_{2,\min},\\
\Ext^q(G,\mathcal{E}(d_{1,\min}-1,d_{2,\min}))&\neq 0\textrm{ for some }q>0,\\
\Ext^q(G,\mathcal{E}(d_{1,\min},d_{2,\min}-1))&\neq 0\textrm{ for some }q>0.
\end{split}
\]

\begin{lemma}\label{KodairaOnQuadricSurf}
Suppose that $X=\mathbb{Q}^2$.
Let $\mathcal{S}$ be a 
spinor bundle $\mathcal{O}(1,0)$ or $\mathcal{O}(0,1)$.
Then we have the following vanishing.
\begin{enumerate}
\item[(1)] $\Ext^q(\mathcal{O}(j,j), \mathcal{E}(a,b))=0$ for all $q>0$ and $j< 2$.
\item[(2)] $\Ext^q(\mathcal{O}(2,2), \mathcal{E}(a,b))=0$ for all $q>0$, if $2ab>c_2(\mathcal{E})$.
\item[(3)] $\Ext^q(\mathcal{S},\mathcal{E}(a+j,b+j))=0$ for all $q>0$ and $j\geqq 0$.
\item[(4)] $\Ext^q(\mathcal{S}(1,1),\mathcal{E}(a,b))=0$ for all $q>0$, if $2ab>c_2(\mathcal{E})$.
\end{enumerate}
\end{lemma}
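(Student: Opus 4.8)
The plan is to imitate the proofs of Lemmas~\ref{KodairaOnProjectiveSpace} and \ref{KodairaOnQuadric}: in each of (1)--(4) I first rewrite the relevant $\Ext$-group as the cohomology on $\mathbb{P}(\mathcal{E})$ of a twist of the tautological bundle $H(\mathcal{E})$, and then invoke the Kodaira or the Kawamata--Viehweg vanishing theorem (the base field has characteristic zero in this section). Write $X=\mathbb{Q}^2=\mathbb{P}^1\times\mathbb{P}^1$, so that $\Pic X=\mathbb{Z}h_1\oplus\mathbb{Z}h_2$ with $h_1^2=h_2^2=0$, $h_1h_2=1$, and $-K_X=\mathcal{O}(2,2)$, where $h_1$ and $h_2$ are the classes of $\mathcal{O}(1,0)$ and $\mathcal{O}(0,1)$. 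Since $\mathcal{E}$ is nef, $\det\mathcal{E}=\mathcal{O}(a,b)$ is nef, hence $a\geqq 0$ and $b\geqq 0$. Using $\mathcal{S}^{\vee}=\mathcal{O}(-1,0)$ or $\mathcal{O}(0,-1)$, each $\Ext$-group in (1)--(4) is isomorphic to $H^q(X,\mathcal{E}(a',b'))\cong H^q(\mathbb{P}(\mathcal{E}),H(\mathcal{E})+\pi^*\mathcal{O}(a',b'))$ for suitable integers $a'$ and $b'$, the last isomorphism because $\pi_*H(\mathcal{E})=\mathcal{E}$ and $R^{>0}\pi_*H(\mathcal{E})=0$; and from the relative canonical bundle formula $K_{\mathbb{P}(\mathcal{E})}=-rH(\mathcal{E})+\pi^*(K_X+\det\mathcal{E})$ one obtains
\[
H(\mathcal{E})+\pi^*\mathcal{O}(a',b')-K_{\mathbb{P}(\mathcal{E})}\cong(r+1)H(\mathcal{E})+\pi^*\mathcal{O}(a'-a+2,\ b'-b+2).
\]
A short check of the four cases shows that the twist $\mathcal{O}(a'-a+2,\,b'-b+2)$ equals $\mathcal{O}(2-j,2-j)$ in (1), $\mathcal{O}(0,0)$ in (2), $\mathcal{O}(j+1,j+2)$ or $\mathcal{O}(j+2,j+1)$ in (3) according as $\mathcal{S}=\mathcal{O}(1,0)$ or $\mathcal{O}(0,1)$, and $\mathcal{O}(0,1)$ or $\mathcal{O}(1,0)$ in (4).

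For (1) and (3) this twist is ample (since $j<2$ in (1) and $j\geqq 0$ in (3)); as $(r+1)H(\mathcal{E})$ is nef and $H(\mathcal{E})$ is $\pi$-ample, $(r+1)H(\mathcal{E})+\pi^*A$ is ample for any ample $A$ by the Nakai--Moishezon criterion, exactly as in the proof of Lemma~\ref{KodairaOnProjectiveSpace}(1), so the Kodaira vanishing theorem gives the asserted vanishing. For (2) the twist is trivial, so it suffices that $(r+1)H(\mathcal{E})$ be nef and big; it is nef, $\dim\mathbb{P}(\mathcal{E})=r+1$, and $H(\mathcal{E})^{r+1}=c_1(\mathcal{E})^2-c_2(\mathcal{E})=2ab-c_2(\mathcal{E})$ (as in Lemma~\ref{KodairaOnProjectiveSpace}(2), using $\mathcal{O}(a,b)^2=2ab$ on $X$), which is positive precisely when $2ab>c_2(\mathcal{E})$; the Kawamata--Viehweg vanishing theorem then applies.

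The only part that requires a genuinely new computation is (4). Take $\mathcal{S}=\mathcal{O}(1,0)$; then the line bundle to control is $L=(r+1)H(\mathcal{E})+\pi^*h_2$, which is nef but no longer ample, so I must check that $L$ is big, that is, that $L^{r+1}>0$. Since $(\pi^*h_2)^2=\pi^*(h_2^2)=0$, only the first two terms of the binomial expansion survive:
\[
L^{r+1}=(r+1)^{r+1}H(\mathcal{E})^{r+1}+(r+1)^{r+1}H(\mathcal{E})^{r}\cdot\pi^*h_2,
\]
and the Grothendieck relation gives $H(\mathcal{E})^{r}\cdot\pi^*h_2=c_1(\mathcal{E})\cdot h_2=a$, whence $L^{r+1}=(r+1)^{r+1}\bigl(2ab-c_2(\mathcal{E})+a\bigr)>0$ because $a\geqq 0$ and $2ab>c_2(\mathcal{E})$. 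Thus $L$ is nef and big, and Kawamata--Viehweg gives the vanishing; the case $\mathcal{S}=\mathcal{O}(0,1)$ is symmetric, with $L^{r+1}=(r+1)^{r+1}\bigl(2ab-c_2(\mathcal{E})+b\bigr)>0$. I expect this intersection-number bookkeeping in (4) to be the only point needing care; parts (1)--(3) amount to transcribing Lemmas~\ref{KodairaOnProjectiveSpace} and \ref{KodairaOnQuadric} to the case $X=\mathbb{Q}^2$.
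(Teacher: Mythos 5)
Your proposal is correct and follows essentially the same route as the paper: in each case one identifies the $\Ext$-group with $H^q(\mathbb{P}(\mathcal{E}),H(\mathcal{E})+\pi^*\mathcal{O}(a',b'))$, rewrites this twist minus $K_{\mathbb{P}(\mathcal{E})}$ as $(r+1)H(\mathcal{E})+\pi^*(\text{a twist})$, and applies Kodaira in (1), (3) and Kawamata--Viehweg in (2), (4). The only divergence is cosmetic: in (4) the paper simply observes that $(r+1)H(\mathcal{E})+\pi^*\mathcal{O}(0,1)$ (resp.\ $\pi^*\mathcal{O}(1,0)$) is nef and big because a nef and big divisor plus a nef one stays nef and big, whereas you verify bigness by the explicit (and correct) computation $L^{r+1}=(r+1)^{r+1}\bigl(2ab-c_2(\mathcal{E})+a\bigr)>0$; both verifications are valid.
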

\begin{proof}
(1) We have isomorphisms 
\[\Ext^q(\mathcal{O}(j,j),\mathcal{E}(a,b))
\cong H^q(\mathcal{E}(a-j,b-j))
\cong H^q(\mathbb{P}(\mathcal{E}), H(\mathcal{E})+\pi^*\mathcal{O}_{\mathbb{Q}^2}(a-j,b-j)).\]
We claim that the last cohomology group vanishes by the Kodaira vanishing theorem;
indeed we have 
\[H(\mathcal{E})+\pi^*\mathcal{O}_{\mathbb{Q}^2}(a-j,b-j)-K_{\mathbb{P}(\mathcal{E})}
\cong (r+1)H(\mathcal{E})+\pi^*\mathcal{O}_{\mathbb{Q}^2}(2-j,2-j)\]
and $(r+1)H(\mathcal{E})+\pi^*\mathcal{O}_{\mathbb{Q}^2}(2-j,2-j)$ is ample by the Nakai-Moishezon criterion since $j<2$.
Therefore the claim follows.

(2) Note that $H(\mathcal{E})^{r+1}=c_1(\mathcal{E})^2-c_2(\mathcal{E})=2ab-c_2(\mathcal{E})$.
Therefore if $2ab>c_2(\mathcal{E})$ then $H(\mathcal{E})$ is nef and big.
Hence $H(\mathcal{E})+\pi^*\mathcal{O}_{\mathbb{Q}^2}(a-j,b-j)-K_{\mathbb{P}(\mathcal{E})}$ is nef and big if $j=2$.
The result then follows from the Kawamata-Viehweg vanishing theorem.

(3) 
Suppose that $\mathcal{S}\cong \mathcal{O}(1,0)$.
We have isomorphisms 
\[\begin{split}
\Ext^q(\mathcal{S},\mathcal{E}(a+j,b+j))&\cong 
H^q(\mathbb{Q}^2, \mathcal{E}(a+j-1,b+j))\\
&\cong
H^q(\mathbb{P}(\mathcal{E}), H(\mathcal{E})
+
\pi^*\mathcal{O}_{\mathbb{Q}^2}(a+j-1,b+j)).
\end{split}\]
We show that the last cohomology group vanishes by the Kodaira vanishing theorem;
we see that $H(\mathcal{E})+
\pi^*\mathcal{O}_{\mathbb{Q}^2}(a+j-1,b+j)-K_{\mathbb{P}(\mathcal{E})}
$ is isomorphic to 
$(r+1)H(\mathcal{E})+
\pi^*\mathcal{O}_{\mathbb{Q}^2}(j+1,j+2)$,
and this line bundle is ample if $j\geqq 0$ by the Nakai-Moishezon criterion.

(4) The proof is almost the same as (3); if $j=-1$, then 
$(r+1)H(\mathcal{E})+
\pi^*\mathcal{O}_{\mathbb{Q}^2}(j+1,j+2)$ is nef and big if so is $H(\mathcal{E})$. 
Now the result follows from the Kawamata-Viehweg vanishing theorem.
\end{proof}

\begin{cor}\label{quadricResol}
Let $\mathcal{E}$ be a nef vector bundle of rank $r$ on $\mathbb{Q}^2$.
Then we can take 
$(d_{1,\min},d_{2,\min})$ such that 
$d_{1,\min}\leqq a$ and $d_{2,\min}\leqq b$.
Moreover we can take 
$(d_{1,\min},d_{2,\min})$ such that 
$d_{1,\min}\leqq a-1$ and $d_{2,\min}\leqq b-1$,
if $2ab>c_2(\mathcal{E})$.
\end{cor}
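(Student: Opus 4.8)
The plan is to deduce both bounds directly from Lemma~\ref{KodairaOnQuadricSurf}, checking the required $\Ext$-vanishing one summand of $G$ at a time. Recall that here $G=\mathcal{O}\oplus\mathcal{O}(1,0)\oplus\mathcal{O}(0,1)\oplus\mathcal{O}(1,1)$, so $\Ext^q(G,\mathcal{E}(d_1,d_2))=0$ for all $q>0$ is equivalent to the simultaneous vanishing, for all $q>0$, of the four groups $H^q(\mathcal{E}(d_1,d_2))$, $H^q(\mathcal{E}(d_1-1,d_2))$, $H^q(\mathcal{E}(d_1,d_2-1))$, $H^q(\mathcal{E}(d_1-1,d_2-1))$. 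The first step is thus to verify these four vanishings at $(d_1,d_2)=(a,b)$: the first and the last are Lemma~\ref{KodairaOnQuadricSurf}~(1) with $j=0$ and $j=1$ (rewritten as $\Ext^q(\mathcal{O},\mathcal{E}(a,b))$ and $\Ext^q(\mathcal{O}(1,1),\mathcal{E}(a,b))$ respectively), while the two middle ones are Lemma~\ref{KodairaOnQuadricSurf}~(3) with $j=0$, applied to the spinor bundles $\mathcal{S}=\mathcal{O}(1,0)$ and $\mathcal{S}=\mathcal{O}(0,1)$.

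Hence $\Ext^q(G,\mathcal{E}(a,b))=0$ for all $q>0$. By the observation recorded just before Lemma~\ref{KodairaOnQuadricSurf} --- that vanishing in one bidegree propagates, via the resulting standard resolution, to every componentwise-larger bidegree --- we conclude $\Ext^q(G,\mathcal{E}(d_1',d_2'))=0$ for all $q>0$, all $d_1'\geqq a$, and all $d_2'\geqq b$. Since the set of bidegrees on which this vanishing holds is upward closed and contains the quadrant with corner $(a,b)$ (and is bounded below in each coordinate, e.g.\ by Serre duality applied to $H^2$), a corner of the shape demanded by the definition of $(d_{1,\min},d_{2,\min})$ can be found inside it, and we may take $d_{1,\min}\leqq a$ and $d_{2,\min}\leqq b$.

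For the strengthened statement, suppose $2ab>c_2(\mathcal{E})$. By the same propagation argument it suffices to prove $\Ext^q(G,\mathcal{E}(a-1,b-1))=0$ for all $q>0$, i.e.\ that $H^q(\mathcal{E}(a-1,b-1))$, $H^q(\mathcal{E}(a-2,b-1))$, $H^q(\mathcal{E}(a-1,b-2))$ and $H^q(\mathcal{E}(a-2,b-2))$ all vanish for $q>0$. The first of these is again Lemma~\ref{KodairaOnQuadricSurf}~(1) with $j=1$; the next two are Lemma~\ref{KodairaOnQuadricSurf}~(4) for $\mathcal{S}=\mathcal{O}(1,0)$ and $\mathcal{S}=\mathcal{O}(0,1)$, rewritten as $\Ext^q(\mathcal{O}(2,1),\mathcal{E}(a,b))\cong H^q(\mathcal{E}(a-2,b-1))$ and $\Ext^q(\mathcal{O}(1,2),\mathcal{E}(a,b))\cong H^q(\mathcal{E}(a-1,b-2))$; the last is Lemma~\ref{KodairaOnQuadricSurf}~(2), namely $\Ext^q(\mathcal{O}(2,2),\mathcal{E}(a,b))\cong H^q(\mathcal{E}(a-2,b-2))$. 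Propagating gives $\Ext^q(G,\mathcal{E}(d_1',d_2'))=0$ for all $q>0$, $d_1'\geqq a-1$, $d_2'\geqq b-1$, and hence we may take $d_{1,\min}\leqq a-1$ and $d_{2,\min}\leqq b-1$.

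There is no serious obstacle here: the whole argument is the bookkeeping of matching each of the (at most) four twisted cohomology groups $H^q(\mathcal{E}(a-\varepsilon_1,b-\varepsilon_2))$ with $\varepsilon_i\in\{0,1\}$ --- or $\varepsilon_i\in\{1,2\}$ in the strengthened case --- to the correct item of Lemma~\ref{KodairaOnQuadricSurf}. The only point requiring a little care is that items (3) and (4), stated for ``a spinor bundle $\mathcal{S}$'', must be invoked for both $\mathcal{O}(1,0)$ and $\mathcal{O}(0,1)$ in order to kill the two ``mixed'' twists $H^q(\mathcal{E}(a-1,b))$ and $H^q(\mathcal{E}(a,b-1))$ (resp.\ their analogues one further step down).
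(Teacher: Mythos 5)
Your proof is correct and is exactly the argument the paper intends (the corollary is stated without a separate proof): one checks $\Ext^q(G,\mathcal{E}(a,b))=0$, resp.\ $\Ext^q(G,\mathcal{E}(a-1,b-1))=0$, for all $q>0$ by matching the four relevant twists to the items of Lemma~\ref{KodairaOnQuadricSurf}, invoking (3) and (4) for both spinor bundles $\mathcal{O}(1,0)$ and $\mathcal{O}(0,1)$, and then applies the propagation observation stated just before that lemma. The only slightly inaccurate point is your parenthetical justification of lower-boundedness: with one coordinate fixed, Serre duality on $H^2$ need not detect nonvanishing as the other coordinate alone decreases (e.g.\ $\mathcal{E}=\mathcal{O}$), and one should instead use $H^1$ via K\"unneth or an Euler-characteristic argument; but this concerns the existence of a pair $(d_{1,\min},d_{2,\min})$, which the paper's definition already takes for granted, not the upper bounds the corollary asserts.
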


\section{Maximal degree subbundles of a nef vector bundle
}
Let $X$ be as in \S~\ref{Preliminaries},
and let $\mathcal{E}$
be as in \S~\ref{Preliminaries}.
Assume that $\mathcal{E}$ is a nef vector bundle of rank $r$ as in \S~\ref{easyconstraints}.

\begin{lemma}\label{detsub}
Suppose that there exists a non-zero morphism $\varphi:\det\mathcal{E}\to \mathcal{E}$. 
Then $\varphi$ makes $\det\mathcal{E}$ a subbundle of $\mathcal{E}$.
\end{lemma}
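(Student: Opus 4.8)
The plan is to show that the cokernel of $\varphi$ is a vector bundle, equivalently that $\varphi$ has constant rank $1$ on all of $X$, equivalently that $\varphi$ is nowhere vanishing as a section of $\mathcal{E}\otimes(\det\mathcal{E})^{\vee}$. Since $\det\mathcal{E}$ is a line bundle, $\varphi$ is an injection of sheaves (it is nonzero and $\det\mathcal{E}$ is torsion-free of rank one on an integral scheme), so the only issue is the behavior along the zero locus $Z$ of $\varphi$, i.e.\ the locus where the quotient $\mathcal{E}/\varphi(\det\mathcal{E})$ fails to be locally free. I would assume $Z\neq\emptyset$ and derive a contradiction from the nefness of $\mathcal{E}$.

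First I would reduce to a curve. Let $C$ be a general complete intersection curve in $X$ cut out by hyperplane sections; for $X$ one of the varieties under consideration this $C$ is a smooth irreducible curve meeting $Z$, and $\mathcal{E}|_C$ is again nef. The restriction $\varphi|_C:\det(\mathcal{E})|_C\to \mathcal{E}|_C$ is a nonzero map of vector bundles on the smooth curve $C$ that vanishes at some point; hence its image is a line subbundle $L\subset \mathcal{E}|_C$ with $L\cong (\det\mathcal{E})|_C\otimes\mathcal{O}_C(D)$ for some \emph{effective nonzero} divisor $D$ supported on $C\cap Z$, so $\deg L>\deg(\det\mathcal{E})|_C = \deg(\det(\mathcal{E}|_C))$. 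On the other hand, since $\mathcal{E}|_C$ is nef, every quotient line bundle of $\mathcal{E}|_C$ has nonnegative degree, and dualizing, every line \emph{sub}bundle $L\subset\mathcal{E}|_C$ satisfies $\deg L\leq \deg(\det(\mathcal{E}|_C))$ — indeed $\det(\mathcal{E}|_C)/L$ embeds in a quotient bundle of $\mathcal{E}|_C$ via the induced exterior-power map, hence has nonnegative degree. This gives $\deg L\leq \deg(\det(\mathcal{E}|_C))<\deg L$, a contradiction. Therefore $Z=\emptyset$.

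The one point that needs care — and which I expect to be the main obstacle — is the passage from the zero locus $Z$ of $\varphi$ on $X$ to an effective \emph{nonzero} divisor on the general curve $C$: one must know that $C$ can be chosen to actually meet $Z$ and to be smooth there, and that restricting $\varphi$ to $C$ does not kill the vanishing. For $X=\mathbb{P}^n$ or $X=\mathbb{Q}^n$ with $n\geq 2$ this is routine via Bertini, taking $C$ to pass through a chosen point of $Z$; when $n=1$ the statement is immediate from the curve argument itself. Once $Z=\emptyset$, $\varphi$ is a bundle injection with locally free cokernel, i.e.\ $\det\mathcal{E}$ is a subbundle of $\mathcal{E}$, as claimed. (Alternatively, one can avoid restricting to a curve by noting that $\mathcal{E}\otimes(\det\mathcal{E})^{\vee}$ need not be nef, so the cleanest route really is the curve reduction above, using only that nefness is preserved under restriction to subvarieties.)
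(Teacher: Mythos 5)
Your proposal is correct and follows essentially the same route as the paper: assume the zero locus of the section corresponding to $\varphi$ is nonempty, restrict to a curve meeting it (but not contained in it), and contradict nefness of $\mathcal{E}$ by producing a line subbundle of degree strictly greater than $\deg\det(\mathcal{E}|_C)$, equivalently a quotient of negative degree. The only cosmetic difference is that the paper takes an arbitrary such curve and passes to its normalization, which avoids the Bertini/smoothness discussion you flag as the delicate point.
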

\begin{proof}
Let $s$ be a non-zero element of $H^0(\mathcal{E}\otimes (\det\mathcal{E})^{\vee})$ corresponding to $\varphi$,
and suppose that the zero locus $(s)_0$ of $s$ is not empty.
Take a curve $C$ such that $C\cap (s)_0\neq \emptyset$
and that $C$ is not contained in $(s)_0$,
and let $\pi:\tilde{C}\to C$ be the normalization.
Then $\mathcal{O}_{\tilde{C}}(\pi^*((s)_0\cap C))$ is a subbundle of 
$
\pi^*(\mathcal{E}\otimes (\det\mathcal{E})^{\vee})$.
This implies that $\pi^*\mathcal{E}$ has a quotient bundle of negative degree,
which contradicts that $\mathcal{E}$ is nef.
Therefore $(s)_0$ is empty and $\varphi$ makes $\det\mathcal{E}$ a subbundle of $\mathcal{E}$.
\end{proof}

\begin{prop}\label{dsubExist}
Suppose 
that $H^1(\det\mathcal{E})=0$
and 
that every nef vector bundle 
with 
trivial determinant 
is isomorphic to a direct sum of copies of $\mathcal{O}$.
Then $\Hom(\det\mathcal{E},\mathcal{E})\neq 0$ implies that $\mathcal{E}\cong \mathcal{O}^{\oplus r-1}\oplus\det\mathcal{E}$.
\end{prop}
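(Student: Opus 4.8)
The plan is to turn a nonzero element of $\Hom(\det\mathcal{E},\mathcal{E})$ into a short exact sequence, identify the quotient using the two standing hypotheses, and then show the sequence splits.

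First, pick a nonzero morphism $\varphi:\det\mathcal{E}\to\mathcal{E}$. By Lemma~\ref{detsub}, $\varphi$ realizes $\det\mathcal{E}$ as a subbundle of $\mathcal{E}$, so the cokernel $\mathcal{Q}$ is again a vector bundle and we obtain a short exact sequence
\[0\to\det\mathcal{E}\xrightarrow{\ \varphi\ }\mathcal{E}\to\mathcal{Q}\to 0\]
with $\rk\mathcal{Q}=r-1$. Taking determinants yields $\det\mathcal{Q}\cong\det\mathcal{E}\otimes(\det\mathcal{E})^{\vee}\cong\mathcal{O}$, so $\mathcal{Q}$ has trivial determinant.

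Second, $\mathcal{Q}$ is a quotient of the nef bundle $\mathcal{E}$, hence nef; by the hypothesis that every nef vector bundle with trivial determinant is a direct sum of copies of $\mathcal{O}$, we get $\mathcal{Q}\cong\mathcal{O}^{\oplus r-1}$. Third, the extension class of the displayed sequence lies in $\Ext^1(\mathcal{Q},\det\mathcal{E})\cong\Ext^1(\mathcal{O}^{\oplus r-1},\det\mathcal{E})\cong H^1(\det\mathcal{E})^{\oplus r-1}$, which vanishes by the hypothesis $H^1(\det\mathcal{E})=0$. Therefore the sequence splits and $\mathcal{E}\cong\det\mathcal{E}\oplus\mathcal{O}^{\oplus r-1}$, as claimed.

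I do not expect a genuine obstacle here: the essential point, namely that the cokernel of $\varphi$ is locally free, has already been settled in Lemma~\ref{detsub}; after that, identifying $\mathcal{Q}$ and checking the vanishing of the relevant $\Ext^1$ are immediate from the two hypotheses together with the elementary fact that a quotient of a nef bundle is nef. The only points to state carefully are the determinant computation for $\mathcal{Q}$ and the identification $\Ext^1(\mathcal{O}^{\oplus r-1},\det\mathcal{E})\cong H^1(\det\mathcal{E})^{\oplus r-1}$.
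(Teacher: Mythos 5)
Your proof is correct and follows essentially the same route as the paper: apply Lemma~\ref{detsub} to get a short exact sequence with locally free quotient, identify the quotient as $\mathcal{O}^{\oplus r-1}$ via nefness and the trivial-determinant hypothesis, and split the extension using $H^1(\det\mathcal{E})=0$. You merely spell out details (nefness of the quotient, the $\Ext^1$ computation) that the paper leaves implicit.
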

\begin{proof}
If $\Hom(\det\mathcal{E},\mathcal{E})\neq 0$, 
then, by Lemma~\ref{detsub}, there exists an exact sequence
\[0\to \det\mathcal{E}\to \mathcal{E}\to \mathcal{F}\to 0\]
with $\mathcal{F}$ a vector bundle.
Since $\det\mathcal{F}\cong \mathcal{O}$, the assumption implies that
$\mathcal{F}\cong \mathcal{O}^{\oplus r-1}$.
Since $H^1(\det\mathcal{E})=0$, this implies that 
$\mathcal{E}\cong \mathcal{O}^{\oplus r-1}\oplus\det\mathcal{E}$.
\end{proof}

\begin{rmk}
The assumption of Proposition~\ref{dsubExist}
is satisfied if $X$ is either a projective space $\mathbb{P}^n$ or a hyperquadric $\mathbb{Q}^n$.
See, e.g., \cite[Chap. 1 Theorem 3.2.1]{oss} and \cite[Lemma 3.6.1]{w3}
\end{rmk}

\begin{lemma}\label{quotientTorsionFree}
Let $\mathcal{F}$ be a locally free coherent sheaf,
$\mathcal{G}$ a torsion-free coherent sheaf,
and let 
\[0\to \mathcal{F}\to \mathcal{G}\to \mathcal{H}\to 0\]
be an exact sequence of coherent sheaves on $X$.
If the support $Z$ of torsion subsheaf $\mathcal{T}$ of $\mathcal{H}$ has codimension $\geqq 2$ in $X$,
then $\mathcal{T}=0$, i.e., $\mathcal{H}$ is torsion-free.
\end{lemma}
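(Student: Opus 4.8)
The plan is to replace $\mathcal{H}$ by its torsion subsheaf and reduce everything to the statement that a locally free sheaf on a smooth variety cannot be enlarged along a closed subset of codimension $\geq 2$ inside a torsion-free sheaf. First I would let $\mathcal{G}'\subseteq\mathcal{G}$ be the preimage of $\mathcal{T}$ under the surjection $\mathcal{G}\to\mathcal{H}$. Then $\mathcal{F}\subseteq\mathcal{G}'$, the quotient $\mathcal{G}'/\mathcal{F}$ is isomorphic to $\mathcal{T}$, and $\mathcal{G}'$ is torsion-free because it is a subsheaf of the torsion-free sheaf $\mathcal{G}$. Hence it suffices to prove that the inclusion $\iota\colon\mathcal{F}\hookrightarrow\mathcal{G}'$ is an isomorphism, equivalently that $\mathcal{T}=0$.

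Next I would put $U=X\setminus Z$ and let $j\colon U\hookrightarrow X$ be the open immersion; by hypothesis $\operatorname{codim}_X Z\geq 2$, and since $X$ is the smooth projective variety fixed in \S\ref{Preliminaries}, the local rings of $X$ along $Z$ are regular of dimension $\geq 2$. Because $\mathcal{T}=\mathcal{G}'/\mathcal{F}$ is supported on $Z$, the restriction $j^*\iota\colon j^*\mathcal{F}\to j^*\mathcal{G}'$ is an isomorphism. I would then invoke two facts about the canonical morphisms to $j_*j^*(-)$. (i) Since $\mathcal{F}$ is locally free, $\mathcal{F}\to j_*j^*\mathcal{F}$ is an isomorphism; locally this reduces to $\mathcal{O}_X\xrightarrow{\ \sim\ }j_*\mathcal{O}_U$, i.e.\ to the fact that local sections of $\mathcal{O}_X$ extend over a closed subset of codimension $\geq 2$ of a smooth (more generally, normal and $S_2$) variety (see, e.g., \cite[Chap.~1]{oss}). (ii) Since $\mathcal{G}'$ is torsion-free, $\mathcal{G}'\to j_*j^*\mathcal{G}'$ is injective: its kernel is supported on $Z$, hence is a torsion subsheaf of $\mathcal{G}'$, hence zero.

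Finally I would conclude by a short diagram chase. By naturality of the canonical maps, the square relating $\iota$, the map of (i), the map of (ii), and $j_*j^*\iota$ commutes; since $j_*j^*\iota$ is an isomorphism (being $j_*$ of an isomorphism) and the top map of (i) is an isomorphism, the composite $\mathcal{F}\xrightarrow{\iota}\mathcal{G}'\to j_*j^*\mathcal{G}'$ is an isomorphism. Combined with the injectivity in (ii), this forces the map of (ii) to be an isomorphism and then forces $\iota$ to be an isomorphism. Therefore $\mathcal{T}=\mathcal{G}'/\mathcal{F}=0$, i.e.\ $\mathcal{H}$ is torsion-free.

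The only genuinely substantive input is fact (i), the extension of sections of a locally free sheaf across a codimension-$\geq 2$ subset; this is exactly where smoothness (or normality together with $S_2$) of $X$ is used, and it is a standard property of reflexive sheaves, so I expect no real obstacle here — the remaining steps are bookkeeping with the functors $j_*,j^*$ and a routine diagram chase.
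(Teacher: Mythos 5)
Your proposal is correct and follows essentially the same route as the paper: your $\mathcal{G}'$ (the preimage of $\mathcal{T}$ in $\mathcal{G}$) is exactly the sheaf $\mathcal{K}$ the paper constructs via the snake lemma, and both arguments conclude by combining the injectivity of $\mathcal{G}'\to j_*j^*\mathcal{G}'$ (torsion-freeness), the isomorphism $\mathcal{F}\cong j_*j^*\mathcal{F}$ (local freeness plus $\mathrm{codim}\,Z\geqq 2$ on the smooth variety $X$), and the fact that restriction to $U$ identifies $\mathcal{F}$ with $\mathcal{G}'$. Your diagram chase is just a slightly more explicit version of the paper's final step, so there is nothing to fix.
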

\begin{proof}
Set $U=X\setminus Z$, and let $i:U\to X$ be the inclusion.
Let 
$\varphi:\mathcal{G}\to \mathcal{H}/\mathcal{T}$ be the composite of the two quotients
$\mathcal{G}\to \mathcal{H}$ and $\mathcal{H}\to \mathcal{H}/\mathcal{T}$.
Let $\mathcal{K}$ be the kernel of $\varphi$. We have the following exact sequence by 
the snake lemma.
\[0\to \mathcal{F}\xrightarrow{\psi} \mathcal{K}\to \mathcal{T}\to 0\]
Since the support of $\mathcal{T}$ is outside $U$,
we see that $\mathcal{F}|_U\cong \mathcal{K}|_U$. 
Since $\mathcal{K}$ is a subsheaf of a torsion-free sheaf $\mathcal{G}$,
$\mathcal{K}$ is torsion-free. Hence the canonical morphism $\mathcal{K}\to i_*(\mathcal{K}|_U)$ is injective.
On the other hand, we have isomorphisms $\mathcal{F}\cong i_*(\mathcal{F}|_U)\cong i_*(\mathcal{K}|_U)$.
Therefore $\psi$ is an isomorphism, and thus $\mathcal{T}\cong 0$. Hence $\mathcal{H}$ is torsion-free.
\end{proof}

Recall here that $\det:K(X)\to \Pic X$ is 
defined since $X$ is smooth and projective
and thus every coherent sheaf admits a finite locally free resolution.
Here $K(X)$ denotes the Grothendieck group of $X$.

In the rest of this section, we assume 
that $\Pic X\cong \mathbb{Z}$,
and let $\mathcal{O}_X(1)$ denote the ample generator of $\Pic X$.
Let $d$ be the integer such that $\det\mathcal{E}\cong \mathcal{O}_X(d)$.
\begin{lemma}\label{nefquotient}
If $\mathcal{G}$ is a quotient coherent sheaf of $\mathcal{E}$,
then $\det\mathcal{G}$ is nef.
Moreover if $\det\mathcal{G}\cong \mathcal{O}_X$
then the support of the torsion subsheaf of $\mathcal{G}$ has codimension $\geqq 2$ in $X$.
\end{lemma}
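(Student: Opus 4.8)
*If $\mathcal{G}$ is a quotient coherent sheaf of $\mathcal{E}$, then $\det\mathcal{G}$ is nef. Moreover if $\det\mathcal{G}\cong\mathcal{O}_X$ then the support of the torsion subsheaf of $\mathcal{G}$ has codimension $\geqq 2$ in $X$.*

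The plan is to reduce the first assertion to a statement about curves and then invoke nefness of $\mathcal{E}$ directly. Since $\Pic X\cong\mathbb{Z}$, writing $\det\mathcal{G}\cong\mathcal{O}_X(e)$ it suffices to show $e\geqq 0$, and for this I would test against a general line (or any irreducible curve) $L\subset X$: if $\nu:\tilde L\to L$ is the normalization, then $\nu^*\mathcal{E}$ is a nef bundle on the smooth curve $\tilde L$, hence every quotient bundle of it has non-negative degree. The pullback $\nu^*\mathcal{G}$ is a quotient of $\nu^*\mathcal{E}$; killing its torsion gives a quotient \emph{bundle} $\mathcal{Q}$ of $\nu^*\mathcal{E}$ with $\deg\mathcal{Q}\leqq\deg\nu^*\mathcal{G}=\deg\nu^*(\det\mathcal{G})$ (torsion only raises the degree of $\det$), so $\deg\nu^*\det\mathcal{G}\geqq 0$. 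Since $\det\mathcal{G}$ has the same degree on every line (again using $\Pic X\cong\mathbb{Z}$), this forces $e\geqq 0$, i.e., $\det\mathcal{G}$ is nef. Here I am using that $\det$ is well-defined on $K(X)$, as recalled just before the statement, so that $\det$ of a quotient and the degrees involved make sense even when $\mathcal{G}$ is not locally free.

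For the second assertion, suppose $\det\mathcal{G}\cong\mathcal{O}_X$, let $\mathcal{T}\subset\mathcal{G}$ be the torsion subsheaf, and suppose for contradiction that its support $Z$ has an irreducible component of codimension $1$, i.e., $Z$ contains a prime divisor. Then I would choose a general line $L$ meeting that divisorial component but not contained in $Z$ (and avoiding the lower-dimensional loci where $\mathcal{E}\to\mathcal{G}$ or $\mathcal{G}/\mathcal{T}$ behave badly, and where $L$ fails to be a line of the standard type); such $L$ exists since lines cover $X$. On the normalization $\nu:\tilde L\to L$, the pullback $\nu^*\mathcal{T}$ is non-zero torsion — because $L$ genuinely meets the support in the divisorial part, the fibre-length of $\mathcal{T}$ along $L$ is positive at the intersection points. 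Then $\nu^*\mathcal{G}$ has non-trivial torsion, so its maximal torsion-free quotient bundle $\mathcal{Q}$ satisfies $\deg\mathcal{Q}<\deg\nu^*\mathcal{G}=\deg\nu^*\det\mathcal{G}=0$. But $\mathcal{Q}$ is a quotient bundle of the nef bundle $\nu^*\mathcal{E}$ on the smooth curve $\tilde L$, so $\deg\mathcal{Q}\geqq 0$, a contradiction. Hence $Z$ has codimension $\geqq 2$.

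The main obstacle is the genericity argument in the second part: I need to know that for a \emph{general} line $L$, the restriction $\nu^*\mathcal{T}$ really is non-zero, i.e., that intersecting with a general line detects a codimension-one component of $\operatorname{Supp}\mathcal{T}$ together with the fact that $\mathcal{T}$ has positive generic length there. This is where one must be a little careful: $\mathcal{T}$ could conceivably be supported on a divisor but be ``invisible'' after a naive restriction if the restriction functor is not exact — so the correct formulation is via $\nu^*$ on $\tilde L$ and a length count at the intersection points, using that $\mathcal{T}_\eta\neq 0$ at the generic point $\eta$ of the divisorial component and that a general line through a general point of that component restricts $\mathcal{T}$ to something of the same generic length. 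Once that local computation is set up, the rest is the uniform ``nef on curves'' mechanism already used in Lemma~\ref{detsub}. Alternatively, and perhaps more cleanly, one can avoid lines entirely: take an arbitrary irreducible curve $C$ meeting the divisorial part of $Z$ transversally at a smooth point of $C$, normalize, and run the same degree inequality; the Hartogs-type rigidity of $\det$ on $X$ (constant on curves up to the fixed generator) is not even needed for the contradiction since we only need \emph{one} curve on which the degree comes out negative.
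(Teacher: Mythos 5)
The crux of both halves of your argument is the asserted identity $\deg\det(\nu^*\mathcal{G})=\deg\nu^*(\det\mathcal{G})$, and that is exactly where the proof has a gap. Since $\det$ is defined via a finite locally free resolution and restriction to a curve is not exact, the two sides differ by Tor corrections: one has $\deg\nu^*(\det\mathcal{G})=\deg\det(\nu^*\mathcal{G})-\length\caltor_1^{\mathcal{O}_X}(\mathcal{G},\mathcal{O}_{\tilde{L}})+\length\caltor_2^{\mathcal{O}_X}(\mathcal{G},\mathcal{O}_{\tilde{L}})-\cdots$, and these sheaves are supported exactly where $\tilde{L}$ meets the locus at which $\mathcal{G}$ fails to be locally free. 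A concrete instance: for $\mathcal{E}=\mathcal{O}$ on $\mathbb{P}^2$ and the quotient $\mathcal{G}=\mathcal{O}_p$, a line $L$ through $p$ gives $\deg\det(\mathcal{G}|_L)=1$ while $\deg(\det\mathcal{G})|_L=0$. Note that the correction goes in the direction that hurts you: non-negativity of $\deg\det(\nu^*\mathcal{G})$ (which is what nefness of $\nu^*\mathcal{E}$ yields through the quotient bundle $\mathcal{Q}$) does not imply non-negativity of $\deg\nu^*(\det\mathcal{G})$. In the first half this matters whenever the torsion of $\mathcal{G}$ has divisorial support, since then no curve can avoid the non-locally-free locus; in the second half you choose the curve precisely so that it meets the support of $\mathcal{T}$, so genericity cannot make the corrections disappear without an argument. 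To repair this along your lines you would need a Tor-independence statement (e.g.\ that a general complete-intersection curve can be cut out by equations that are successively nonzerodivisors on $\mathcal{G}$, using that $\mathcal{G}$ has only finitely many associated subvarieties), together with the positivity of the length of the torsion of $\mathcal{G}|_{\tilde{L}}$ coming from the divisorial part of the support of $\mathcal{T}$ (here, too, right-exactness of pullback means the image of $\nu^*\mathcal{T}$ in $\nu^*\mathcal{G}$ must be controlled); this is exactly the ``local computation'' you defer in your final paragraph, and without it the contradiction $\deg\mathcal{Q}<0$ is not established. A minor further point: the lemma is stated and used for an arbitrary smooth projective $X$ with $\Pic X\cong\mathbb{Z}$, which need not be covered by lines, so the line-based version of your argument needs your fallback to arbitrary curves in any case.

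For comparison, the paper avoids the whole issue by splitting the problem in $\Pic X$: write $\det\mathcal{G}\cong\det\mathcal{T}\otimes\det(\mathcal{G}/\mathcal{T})$. The curve argument is applied only to the torsion-free quotient, restricting the induced map $\wedge^{s}\mathcal{E}\to\det(\mathcal{G}/\mathcal{T})$ to a curve meeting the open set where $\mathcal{G}/\mathcal{T}$ is locally free (possible because its singular locus has codimension $\geqq 2$), so the determinant genuinely commutes with restriction there; while $\det\mathcal{T}$ is computed from a filtration of $\mathcal{T}$ with graded pieces $\mathcal{O}_D(u)$, giving $\det\mathcal{T}$ trivial when the support has codimension $\geqq 2$ and ample when it contains a divisor. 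Triviality of $\det\mathcal{G}$ together with nefness of $\det(\mathcal{G}/\mathcal{T})$ then rules out a divisorial component at once. If you want to keep your single-curve mechanism, the cleanest fix is to run it on $\mathcal{G}/\mathcal{T}$ (where a general curve avoids the bad locus) and to handle $\det\mathcal{T}$ by this effectivity argument rather than by restriction.
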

\begin{proof}
First suppose that $\mathcal{G}$ is torsion free,
and let $Z$ be the singular locus of $\mathcal{G}$,
i.e., the locus where $\mathcal{G}$ is not locally free. Then $Z$ has codimension $\geqq 2$.
Set $U=X\setminus Z$, and let $i:U\to X$ be the inclusion.
Observe that $\det \mathcal{G}$ is equal to the sheaf $i_*(\det(\mathcal{G}|_U))$.
Let $s$ be the rank of $\mathcal{G}$.
Then the surjection $\mathcal{E}\to \mathcal{G}$ induces a morphism $\wedge^{s}\mathcal{E}\to \det\mathcal{G}$
which is surjective on $U$.
Suppose, to the contrary, that $\det\mathcal{G}$ is not nef.
Then $\det\mathcal{G}$ is isomorphic to $\mathcal{O}(k)$ for some negative integer $k$
since $\Pic X\cong \mathbb{Z}$.
Let $C$ be a general smooth curve that intersect with $U$.
Then the restriction $\wedge^{s}\mathcal{E}|_C\to \det\mathcal{G}|_C\cong \mathcal{O}_C(k)$ is non-zero,
and the image of this morphism is a line bundle of negative degree on $C$.
This contradicts that $\wedge^{s}\mathcal{E}|_C$ is nef.
Therefore $\det\mathcal{G}$ is nef.

Now consider the general case. Let $\mathcal{T}$ be the torsion subsheaf of $\mathcal{G}$.
Then $\det(\mathcal{G}/\mathcal{T})$ is nef by the consideration above.
Since $\det\mathcal{G}\cong \det\mathcal{T}\otimes \det(\mathcal{G}/\mathcal{T})$,
it is enough to show that $\det\mathcal{T}$ is nef.
Suppose, for a moment, that $\mathcal{T}\cong \mathcal{O}_D(u):=\mathcal{O}_X(u)\otimes\mathcal{O}_D$ 
for some closed subvariety $D$ of $X$
and an integer $u$.
If $D$ has codimension $\geqq 2$ in $X$, then $\det\mathcal{T}=\mathcal{O}_X$.
If $D$ has codimension one, then $D$ is an ample Cartier divisor since $\Pic X\cong \mathbb{Z}$.
Thus $\det\mathcal{T}$ is isomorphic to an ample line bundle $\mathcal{O}_X(D)$.
Therefore $\det\mathcal{T}$ is nef if $\mathcal{T}\cong \mathcal{O}_D(u)$.
Now, for a general $\mathcal{T}$, recall that $\mathcal{T}$ has an ``irreducible decomposition'',
i.e., a filtration every graded piece of which is of the form $\mathcal{O}_D(u)$ for 
some 
integer $u$
where $D$ is a closed subvariety 
defined by an associated point of $\mathcal{T}$.
Since the assertion holds for every graded piece, we conclude that $\det\mathcal{T}$ is nef.

Finally if $\det\mathcal{G}\cong \mathcal{O}_X$ it follows from the consideration above 
that the support of $\mathcal{T}$ has codimension $\geqq 2$ in $X$.
\end{proof}

\begin{rmk}\label{rmk6.6}.
If $\dim X=2$ and $\mathcal{G}$ is torsion-free,  or if $\dim X=1$,
then the assumption that $\Pic X\cong \mathbb{Z}$
is unnecessary in Lemma~\ref{nefquotient}.
\end{rmk}

\begin{rmk}
If $\mathcal{G}$ is not locally free, then $\det\mathcal{G}\neq \wedge^s\mathcal{G}$ in general
where $s=\rk \mathcal{G}$.
For example, if $\dim X=1$ and $\mathcal{G}=\mathcal{O}^{\oplus 2}\oplus k(p)$
where $k(p)$ is the residue field at a point $p\in X$, then $\det\mathcal{G}\cong \mathcal{O}(p)$
whereas $\wedge^2\mathcal{G}\cong \mathcal{O}\oplus k(p)^{\oplus 2}$.
If $X=\mathbb{P}^2$ and $\mathcal{G}\cong \mathcal{O}\oplus \mathfrak{m}$
where $\mathfrak{m}$ is the maximal ideal of a point $p\in X$,
then $\det\mathcal{G}\cong \mathcal{O}$
whereas $\wedge^2\mathcal{G}\cong \mathfrak{m}$.
\end{rmk}

\begin{prop}\label{generalO(1)double}
Suppose that 
$H^0(X,\mathcal{O}_X(1))\neq 0$
and 
that $\dim \Hom(\mathcal{O}(d), \mathcal{E})=0$.
If $\dim \Hom(\mathcal{O}(d-1),\mathcal{E})\geqq 2$,
then $d\leqq 2$.
Moreover if $d=2$ then $\dim \Hom(\mathcal{O}(1),\mathcal{E})= 2$
and 
we have an exact sequence 
\[0\to 
\Hom(\mathcal{O}(1),\mathcal{E})
\otimes \mathcal{O}(1)\to \mathcal{E}\to \mathcal{G}\to 0\]
with $\mathcal{G}$ a 
vector bundle.
\end{prop}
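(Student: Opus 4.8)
The plan is to use two linearly independent homomorphisms, restriction to general lines, and Lemmas~\ref{quotientTorsionFree} and \ref{nefquotient}. Fix linearly independent $\varphi_{1},\varphi_{2}\in\Hom(\mathcal{O}(d-1),\mathcal{E})$ and set $\Phi=(\varphi_{1},\varphi_{2}):\mathcal{O}(d-1)^{\oplus 2}\to\mathcal{E}$; its generic rank is $1$ or $2$ since $\varphi_{1}\neq 0$. If the generic rank is $1$, the image of $\Phi$ is a rank-one subsheaf of $\mathcal{E}$ whose saturation is a line bundle $\mathcal{O}(k)\subseteq\mathcal{E}$ (here $X$ is smooth with $\Pic X\cong\mathbb{Z}$); since $\varphi_{1}$ factors through $\mathcal{O}(k)$ we get $k\geq d-1$, while $\Hom(\mathcal{O}(d),\mathcal{E})=0$ together with $H^{0}(X,\mathcal{O}_{X}(1))\neq 0$ forces $k\leq d-1$, hence $k=d-1$ and both $\varphi_{i}$ are scalar multiples of the inclusion $\mathcal{O}(d-1)=\mathcal{O}(k)\hookrightarrow\mathcal{E}$, contradicting independence. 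So the generic rank of $\Phi$ is $2$, and $\wedge^{2}\Phi:\mathcal{O}(2d-2)\to\wedge^{2}\mathcal{E}$ is nonzero; restricting to a general line $C$, on which $\mathcal{E}|_{C}=\bigoplus\mathcal{O}_{C}(a_{i})$ with all $a_{i}\geq 0$ (nefness) and $\sum a_{i}=d$, a nonzero map $\mathcal{O}_{C}(2d-2)\to\wedge^{2}\mathcal{E}|_{C}=\bigoplus_{i<j}\mathcal{O}_{C}(a_{i}+a_{j})$ forces $2d-2\leq\max_{i<j}(a_{i}+a_{j})\leq d$, i.e.\ $d\leq 2$. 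This proves the first assertion.

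Now suppose $d=2$. By the above the generic rank of $\Phi:\mathcal{O}(1)^{\oplus 2}\to\mathcal{E}$ is $2$, so $\Phi$ is injective and we have $0\to\mathcal{O}(1)^{\oplus 2}\xrightarrow{\Phi}\mathcal{E}\to\mathcal{G}\to 0$ with $\det\mathcal{G}\cong\mathcal{O}_{X}$. By Lemma~\ref{nefquotient} the torsion subsheaf of the quotient $\mathcal{G}$ of $\mathcal{E}$ is supported in codimension $\geq 2$, so $\mathcal{G}$ is torsion-free by Lemma~\ref{quotientTorsionFree}. The crucial step is to promote this to local freeness. I would argue by contradiction: if $\mathcal{G}$ fails to be locally free at a closed point $p$, choose a general line $C$ through $p$; then $C$ is not contained in the (codimension $\geq 2$) non-locally-free locus, so $\caltor_{1}(\mathcal{G},\mathcal{O}_{C})$ is a torsion sheaf on $C$ which embeds into the torsion-free sheaf $\mathcal{O}_{C}(1)^{\oplus 2}$ and hence vanishes; thus $0\to\mathcal{O}_{C}(1)^{\oplus 2}\to\mathcal{E}|_{C}\to\mathcal{G}|_{C}\to 0$ is exact, and since $\deg(\mathcal{E}|_{C})=2$ we get $\deg(\mathcal{G}|_{C})=0$. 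Writing $\mathcal{G}|_{C}$ on $\mathbb{P}^{1}$ as the direct sum of a torsion sheaf of length $\ell$ and a locally free part, the locally free part is a quotient bundle of the nef bundle $\mathcal{E}|_{C}$, hence has degree $\geq 0$; but this degree equals $-\ell\leq 0$, so $\ell=0$ and the locally free part is a nef bundle of degree $0$ on $\mathbb{P}^{1}$, thus trivial. Therefore $\mathcal{G}|_{C}\cong\mathcal{O}_{C}^{\oplus(r-2)}$, so $\mathcal{G}_{p}\otimes\mathcal{O}_{C,p}$ is free over the discrete valuation ring $\mathcal{O}_{C,p}$; hence $\mathcal{G}_{p}$ is generated by $r-2$ elements, and being torsion-free of rank $r-2$ over the regular local ring $\mathcal{O}_{X,p}$ it is free --- a contradiction. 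So $\mathcal{G}$ is a vector bundle.

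Finally, $\mathcal{G}$ is a nef vector bundle with $\det\mathcal{G}\cong\mathcal{O}_{X}$, so $\Hom(\mathcal{O}(1),\mathcal{G})=0$: a nonzero map $\mathcal{O}(1)\to\mathcal{G}$ would be injective (as $\mathcal{G}$ is torsion-free), with saturation a line bundle $\mathcal{O}(k)\hookrightarrow\mathcal{G}$, $k\geq 1$; then $\mathcal{G}/\mathcal{O}(k)$ is a quotient sheaf of $\mathcal{E}$ whose determinant $\mathcal{O}(-k)$ is nef by Lemma~\ref{nefquotient}, which is absurd. Applying $\Hom(\mathcal{O}(1),-)$ to the short exact sequence gives $\dim\Hom(\mathcal{O}(1),\mathcal{E})=\dim\Hom(\mathcal{O}(1),\mathcal{O}(1)^{\oplus 2})=2$; in particular $\varphi_{1},\varphi_{2}$ form a basis of $\Hom(\mathcal{O}(1),\mathcal{E})$, so $\Phi$ is the evaluation map and we obtain the exact sequence $0\to\Hom(\mathcal{O}(1),\mathcal{E})\otimes\mathcal{O}(1)\to\mathcal{E}\to\mathcal{G}\to 0$ with $\mathcal{G}$ a vector bundle. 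The step I expect to be the main obstacle is the proof of local freeness of $\mathcal{G}$: this is exactly where nefness of $\mathcal{E}$ enters essentially, via the restriction to lines and the local-algebra computation at $p$; everything else is fairly formal.
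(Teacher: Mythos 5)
Your argument is essentially correct and all the individual steps check out (the saturation argument for generic rank two, the $\caltor_1$ argument giving left-exactness of the restricted sequence, the degree count on the line, and the Nakayama/torsion-free argument for freeness at $p$), but it takes a genuinely different route from the paper and quietly uses more than the stated hypotheses. The paper never forms $\wedge^2\Phi$ and never restricts to lines: it quotients $\mathcal{E}$ by one section to get a torsion-free sheaf $\mathcal{F}$ (via Lemma~\ref{quotientTorsionFree}), pushes a second independent homomorphism into $\mathcal{F}$, and the snake lemma yields $0\to V\otimes\mathcal{O}(d-1)\to\mathcal{E}\to\mathcal{G}\to 0$ with $\dim V=2$; then $\det\mathcal{G}\cong\mathcal{O}(2-d)$ is nef by Lemma~\ref{nefquotient}, which gives $d\leqq 2$ in one stroke, and the same lemma both rules out elements of $\Hom(\mathcal{O}(1),\mathcal{E})$ outside $V$ and, applied on the normalization of a general curve through a given point, yields local freeness of $\mathcal{G}$. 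Your proof instead needs, at two places, a rational curve of degree one: a general line on which $\mathcal{E}$ splits with nonnegative degrees (to deduce $2d-2\leqq d$ from $\wedge^2\Phi\neq 0$), and a line through an arbitrary point $p$ not contained in the codimension-two singular locus of $\mathcal{G}$ (to get $\mathcal{G}|_C\cong\mathcal{O}_C^{\oplus r-2}$ and hence freeness at $p$). This is harmless on $\mathbb{P}^n$ and on $\mathbb{Q}^n$ with $n\geqq 3$, which are the only cases where the paper invokes the proposition, so your proof covers the intended applications and is arguably more concrete there; but the proposition is stated for an arbitrary smooth projective $X$ with $\Pic X\cong\mathbb{Z}$, ample generator $\mathcal{O}_X(1)$ and $H^0(X,\mathcal{O}_X(1))\neq 0$, where no covering family of lines is available, and this extra generality is exactly what the paper's route through determinants of quotient sheaves (Lemma~\ref{nefquotient}) buys. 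If you want your argument to prove the statement as stated, the first part should be replaced by an argument not using lines (e.g.\ the paper's, or any argument bounding $d$ via nefness of $\det$ of the cokernel of $\Phi$), and likewise for the local-freeness step.
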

\begin{proof}
Let $\sigma$ be a non-zero element of $\Hom(\mathcal{O}(d-1),\mathcal{E})$,
and $s$ the corresponding element of $H^0(\mathcal{E}(1-d))$.
Since $\mathcal{E}$ is torsion-free, $\sigma$ is generically injective.
Moreover $\sigma$ is injective since $\mathcal{O}(1)$ is torsion-free.
Since $H^0(\mathcal{O}(1))\neq 0$,
we have an injection 
$\Hom(\mathcal{O}(i+1),\mathcal{E})\to 
\Hom(\mathcal{O}(i),\mathcal{E})$ for any integer $i$.
Since $\Hom(\mathcal{O}(d), \mathcal{E})=0$,
we have $\Hom(\mathcal{O}(i), \mathcal{E})=0$ for all $i\geqq d$.
Since $\mathcal{E}$ is locally free,
this implies that the zero locus $(s)_0$ of $s$ has codimension $\geqq 2$.
Define a coherent sheaf $\mathcal{F}$ by the following exact sequence
\[0\to \mathcal{O}(d-1)\xrightarrow{\sigma} \mathcal{E}\to \mathcal{F}\to 0.\]
Then $\mathcal{F}$ is locally free outside the zero locus $(s)_0$ of $s$.
Thus the support of torsion subsheaf of $\mathcal{F}$ is contained in $(s)_0$.
Hence $\mathcal{F}$ is torsion-free by Lemma~\ref{quotientTorsionFree}.

We have an exact sequence
\[0\to \Hom(\mathcal{O}(d-1),\mathcal{O}(d-1))\to \Hom(\mathcal{O}(d-1),\mathcal{E})\to \Hom(\mathcal{O}(d-1),\mathcal{F}).
\]
In particular, 
we see that the image of the map $\Hom(\mathcal{O}(d-1),\mathcal{E})\to\Hom(\mathcal{O}(d-1), \mathcal{F})$
has dimension $\geqq 1$.
Let $\tau$ be a non-zero element 
in the image.
Since $\mathcal{F}$ is torsion-free, $\tau$ is generically injective.
Moreover $\tau$ is injective since $\mathcal{O}(d-1)$ is torsion-free.
Define a coherent sheaf $\mathcal{G}$ by the following exact sequence
\[0\to \mathcal{O}(d-1)\xrightarrow{\tau} \mathcal{F}\to \mathcal{G}\to 0.\]
Let $V$ be the pull back of the one-dimensional subspace $K\tau$ generated by $\tau$
by the map $\Hom(\mathcal{O}(d-1),\mathcal{E})\to \Hom(\mathcal{O}(d-1),\mathcal{F})$.
Then $V$ has dimension two. 
By the snake lemma, we see that there exists the following exact sequence
\[0\to V\otimes \mathcal{O}(d-1)\to \mathcal{E}\to \mathcal{G}\to 0.\]
Hence $\det\mathcal{G}\cong \mathcal{O}_X(2-d)$.
Since $\det\mathcal{G}$ is nef by Lemma~\ref{nefquotient},
we conclude that $d\leqq 2$.

Suppose moreover that $d=2$. Then $\det\mathcal{G}\cong \mathcal{O}_X$.
Lemma~\ref{nefquotient} implies 
that the support of the torsion subsheaf of $\mathcal{G}$
has codimension $\geqq 2$ in $X$.
Then $\mathcal{G}$ is torsion-free  by Lemma~\ref{quotientTorsionFree}.
Next we show that $V=\Hom(\mathcal{O}(1),\mathcal{E})$.
Suppose, to the contrary, that $V\subsetneq \Hom(\mathcal{O}(1),\mathcal{E})$.
Let $\upsilon$ be an element of $\Hom(\mathcal{O}(1),\mathcal{E})\setminus V$.
Then, since $\mathcal{G}$ and $\mathcal{O}(1)$ are torsion-free,
$\upsilon$ defines an injective morphism $\mathcal{O}_X(1)\to \mathcal{G}$,
which implies that $\mathcal{G}$ has a quotient sheaf $\mathcal{H}$ with $\det\mathcal{H}\cong \mathcal{O}_X(-1)$.
On the other hand, since $\mathcal{H}$ is also a quotient sheaf of $\mathcal{E}$,
$\det\mathcal{H}$ is nef by Lemma~\ref{nefquotient}. This is a contradiction. Therefore 
$V=\Hom(\mathcal{O}(1),\mathcal{E})$.
Finally we show that $\mathcal{G}$ is a vector bundle.
Let $Z$ be the singular locus of $\mathcal{G}$. Since $\mathcal{G}$ is torsion-free,
$Z$ has codimension $\geqq 2$.
For any point $x$ of $X$, take a curve $C$ which contains $x$ and is not contained in $Z$.
Let $\tilde{C}\to C$ be the normalization.
Then $\mathcal{G}\otimes \mathcal{O}_{\tilde{C}}$ is generically free of rank $r-2$.
Thus we have an exact sequence
\[0\to \Hom(\mathcal{O}(1),\mathcal{E})\otimes \mathcal{O}_{\tilde{C}}(1)\to \mathcal{E}\otimes\mathcal{O}_{\tilde{C}}
\to \mathcal{G}\otimes\mathcal{O}_{\tilde{C}}\to 0.\]
Since $\det \Hom(\mathcal{O}(1),\mathcal{E})\otimes \mathcal{O}_{\tilde{C}}(1)
\cong \det \mathcal{E}\otimes\mathcal{O}_{\tilde{C}}$, we see that $\det(\mathcal{G}\otimes\mathcal{O}_{\tilde{C}})\cong 
\mathcal{O}_{\tilde{C}}$.
Then $\mathcal{G}\otimes\mathcal{O}_{\tilde{C}}$ is torsion-free by Lemma~\ref{nefquotient} and Remark~\ref{rmk6.6}.
Thus $\mathcal{G}\otimes\mathcal{O}_{\tilde{C}}$ is locally free,
and hence  $\mathcal{G}$ is locally free at $x$.
Therefore $\mathcal{G}$ is a vector bundle.
\end{proof}

\begin{cor}\label{O(1)double}
Suppose 
that 
every nef vector bundle on $X$
with 
trivial determinant 
is isomorphic to a direct sum of copies of $\mathcal{O}_X$,
that 
$H^0(X,\mathcal{O}_X(1))\neq 0$,
and 
that $H^1(X,\mathcal{O}(1))=0$.
If $d=2$, $\dim \Hom(\mathcal{O}(2), \mathcal{E})=0$,
and 
$\dim \Hom(\mathcal{O}(1),\mathcal{E})\geqq 2$,
then 
$\mathcal{E}\cong \mathcal{O}(1)^{\oplus 2}\oplus \mathcal{O}^{\oplus r-2}$.
\end{cor}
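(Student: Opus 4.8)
The plan is to invoke Proposition~\ref{generalO(1)double} directly. Since $d=2$, the hypotheses $\dim\Hom(\mathcal{O}(2),\mathcal{E})=0$ and $\dim\Hom(\mathcal{O}(1),\mathcal{E})\geqq 2$ are exactly the hypotheses $\dim\Hom(\mathcal{O}(d),\mathcal{E})=0$ and $\dim\Hom(\mathcal{O}(d-1),\mathcal{E})\geqq 2$ of that proposition, and $H^0(X,\mathcal{O}_X(1))\neq 0$ is also among our standing assumptions. Hence Proposition~\ref{generalO(1)double} applies and yields $\dim\Hom(\mathcal{O}(1),\mathcal{E})=2$ together with an exact sequence
\[0\to \Hom(\mathcal{O}(1),\mathcal{E})\otimes\mathcal{O}(1)\to\mathcal{E}\to\mathcal{G}\to 0\]
in which $\mathcal{G}$ is a vector bundle, necessarily of rank $r-2$.

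Next I would identify $\mathcal{G}$. Taking determinants in the displayed sequence and using $\det\mathcal{E}\cong\mathcal{O}_X(2)$ together with $\det\bigl(\Hom(\mathcal{O}(1),\mathcal{E})\otimes\mathcal{O}(1)\bigr)\cong\mathcal{O}_X(2)$ gives $\det\mathcal{G}\cong\mathcal{O}_X$. Since $\mathcal{G}$ is a quotient bundle of the nef bundle $\mathcal{E}$, it is itself nef. By the standing hypothesis that every nef vector bundle on $X$ with trivial determinant is isomorphic to a direct sum of copies of $\mathcal{O}_X$, we conclude $\mathcal{G}\cong\mathcal{O}_X^{\oplus r-2}$.

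Finally I would show the resulting extension splits. Using $\dim\Hom(\mathcal{O}(1),\mathcal{E})=2$ to rewrite the sequence as
\[0\to \mathcal{O}(1)^{\oplus 2}\to\mathcal{E}\to\mathcal{O}_X^{\oplus r-2}\to 0,\]
its class lies in $\Ext^1(\mathcal{O}_X^{\oplus r-2},\mathcal{O}(1)^{\oplus 2})\cong H^1(X,\mathcal{O}(1))^{\oplus 2(r-2)}$, which vanishes by the hypothesis $H^1(X,\mathcal{O}(1))=0$. Hence the sequence splits and $\mathcal{E}\cong\mathcal{O}(1)^{\oplus 2}\oplus\mathcal{O}_X^{\oplus r-2}$, as desired. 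This corollary is essentially a formal consequence of Proposition~\ref{generalO(1)double}; the only points needing a word of care are that quotient bundles of nef bundles are nef and the determinant bookkeeping, both of which are routine, so I do not expect any substantial obstacle.
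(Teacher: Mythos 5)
Your proposal is correct and matches the paper's own argument: apply Proposition~\ref{generalO(1)double} to get $\dim \Hom(\mathcal{O}(1),\mathcal{E})=2$ and the exact sequence with $\mathcal{G}$ a vector bundle, observe that $\mathcal{G}$ is nef with trivial determinant and hence trivial, and split the extension using $H^1(X,\mathcal{O}(1))=0$. You simply spell out the determinant bookkeeping and the $\Ext^1$ computation that the paper leaves implicit.
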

\begin{proof}
By Proposition~\ref{generalO(1)double}, $\dim \Hom(\mathcal{O}(1),\mathcal{E})=2$
and we have the following exact sequence 
\[0\to 
\Hom(\mathcal{O}(1),\mathcal{E})
\otimes \mathcal{O}(1)\to \mathcal{E}\to \mathcal{G}\to 0\]
with $\mathcal{G}$ a vector bundle.
We see that 
$\mathcal{G}$ is nef with $\det\mathcal{G}\cong \mathcal{O}_X$.
Therefore $\mathcal{G}\cong \mathcal{O}^{\oplus r-2}$ by assumption.
Since $H^1(X,\mathcal{O}(1))=0$, we conclude that $\mathcal{E}\cong \mathcal{O}(1)^{\oplus 2}\oplus \mathcal{O}^{\oplus r-2}$.
\end{proof}

\begin{rmk}
The assumption on $(X,\mathcal{O}(1))$ of Corollary~\ref{O(1)double}
is satisfied if 
$(X,\mathcal{O}(1))$ is either $(\mathbb{P}^n,\mathcal{O}(1))$ or $(\mathbb{Q}^n,\mathcal{O}(1))$.
\end{rmk}

\begin{rmk}
Set $X=\mathbb{P}^2$, and set $\mathcal{F}=\mathfrak{m}_{x,X}\otimes\mathcal{O}(1)$,
where $\mathfrak{m}_{x,X}$ is the ideal sheaf on $X$ of a point $x$ of $X$.
Then $\mathcal{F}$ is a torsion-free sheaf,
and we see that $\dim H^0(\mathcal{F})=2$ and that $\dim H^0(\mathcal{F}(-1))=0$.
Let $H$ be a line passing through $x$,
and let $t$ be the corresponding element of $H^0(\mathcal{F})$.
Set $U=X\setminus\{x\}$. Then $H$ is the closure of the zero locus $(t|_U)_0$
of the restriction $t|_U$ of $t$ to $U$. However the restriction $t|_H\in H^0(\mathcal{F}|_H)$ of $t$
to $H$ does not vanish.
The element $t|_H$ generates the torsion subsheaf of $\mathcal{F}|_H$.
\end{rmk}

\section{The case where $X$ is a projective space}
Let $X$, $G_0,\dots,G_m$,
$G$, $A$, 
$\mathcal{E}$, $\mathcal{O}_X(1)$,
$d_{\min}$, 
and 
$e_{l,j}$
be as in \S~\ref{Preliminaries}
for $0\leqq j\leqq l\leqq m$.
Assume that $\mathcal{E}$ be a nef vector bundle of rank $r$ as in \S~\ref{easyconstraints}.
In this section, we assume 
that $X$, $G_0,\dots,G_m$, and $\mathcal{O}_X(1)$ are as in the case (1) in \S~\ref{easyconstraints}.
Let $d$ be as in \S~\ref{easyconstraints}. 

\begin{lemma}\label{hyperplaneIsom}
The following holds.
\begin{enumerate}
\item[(1)] Let $H$ be a hyperplane of $\mathbb{P}^n$. Then $d_{\min}$ for $\mathcal{E}|_H$ with respect to 
$(\mathcal{O}_H,\dots,\mathcal{O}_H(n-1))$ is less than or equal to $d_{\min}$ for $\mathcal{E}$ with respect to 
$(\mathcal{O},\dots,\mathcal{O}(n))$.
\item[(2)] Suppose that $H^0(\mathcal{E}(k-3))=0$ and $d_{\min}\leqq k$. Then,
for any $l$-dimensional linear section $\mathbb{P}^l$ of $\mathbb{P}^n$ with $l\geqq 2$,
the restriction map
$H^0(\mathcal{E}(k-2))\to H^0(\mathcal{E}|_{\mathbb{P}^l}(k-2))$
is an isomorphism.
\end{enumerate}
\end{lemma}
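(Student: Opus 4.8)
The plan is to derive both parts from the long exact cohomology sequences of the hyperplane‑restriction sequences $0\to\mathcal{F}(-1)\to\mathcal{F}\to\mathcal{F}|_H\to0$ (exact for every hyperplane $H$ in a projective space and every locally free $\mathcal{F}$), together with one reformulation of the definition of $d_{\min}$: since $\Ext^q(G,\mathcal{E}(d'))\cong\bigoplus_{i=0}^{n}H^q(\mathbb{P}^n,\mathcal{E}(d'-i))$, the defining property of $d_{\min}$ is that $H^q(\mathbb{P}^n,\mathcal{E}(d'-i))=0$ for all $q>0$, all $0\le i\le n$ and all $d'\ge d_{\min}$; letting $d'$ vary, this is equivalent to $H^q(\mathbb{P}^n,\mathcal{E}(m))=0$ for all $q>0$ and all $m\ge d_{\min}-n$. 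The same reformulation will be applied on linear sections, so part (1) is the real input to part (2).

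For (1), I would set $d_0=d_{\min}$ for $\mathcal{E}$ on $\mathbb{P}^n$, identify $H\cong\mathbb{P}^{n-1}$, and for $0\le i\le n-1$, $d'\ge d_0$ read off from the restriction sequence applied to $\mathcal{F}=\mathcal{E}(d'-i)$ the fragment $H^q(\mathbb{P}^n,\mathcal{E}(d'-i))\to H^q(H,\mathcal{E}|_H(d'-i))\to H^{q+1}(\mathbb{P}^n,\mathcal{E}(d'-i-1))$ for $q>0$. The first term vanishes since $d'-i\ge d_0-(n-1)>d_0-n$, and the last since $d'-i-1\ge d_0-n$, using $i\le n-1$. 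Hence $\Ext^q_H(\mathcal{O}_H(i),\mathcal{E}|_H(d'))\cong H^q(H,\mathcal{E}|_H(d'-i))=0$ for all $q>0$, $0\le i\le n-1$ and $d'\ge d_0$, i.e. $d_{\min}$ for $\mathcal{E}|_H$ with respect to $(\mathcal{O}_H,\dots,\mathcal{O}_H(n-1))$ is $\le d_0$. Iterating over a flag of linear sections yields $d_{\min}(\mathcal{E}|_{\mathbb{P}^j})\le d_{\min}(\mathcal{E})$ for every linear section $\mathbb{P}^j\subset\mathbb{P}^n$, with $d_{\min}(\mathcal{E}|_{\mathbb{P}^j})$ taken with respect to $(\mathcal{O}_{\mathbb{P}^j},\dots,\mathcal{O}_{\mathbb{P}^j}(j))$.

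For (2), I would first dispose of the trivial case $l=n$, so that $3\le l+1\le n$, and fix a flag $\mathbb{P}^l\subset\mathbb{P}^{l+1}\subset\dots\subset\mathbb{P}^n$ through the given $\mathbb{P}^l$. The map under study is the composite of the restriction maps $H^0(\mathbb{P}^j,\mathcal{E}|_{\mathbb{P}^j}(k-2))\to H^0(\mathbb{P}^{j-1},\mathcal{E}|_{\mathbb{P}^{j-1}}(k-2))$ for $j=n,\dots,l+1$, so it is enough that each is an isomorphism; from $0\to\mathcal{E}|_{\mathbb{P}^j}(k-3)\to\mathcal{E}|_{\mathbb{P}^j}(k-2)\to\mathcal{E}|_{\mathbb{P}^{j-1}}(k-2)\to0$ this holds as soon as $H^0(\mathbb{P}^j,\mathcal{E}|_{\mathbb{P}^j}(k-3))=0$ and $H^1(\mathbb{P}^j,\mathcal{E}|_{\mathbb{P}^j}(k-3))=0$. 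The $H^1$‑vanishing follows from part (1): $d_{\min}(\mathcal{E}|_{\mathbb{P}^j})\le d_{\min}(\mathcal{E})\le k$, hence $H^q(\mathbb{P}^j,\mathcal{E}|_{\mathbb{P}^j}(k-i))=0$ for all $q>0$ and $0\le i\le j$, and $j\ge3$ allows $i=3$. The $H^0$‑vanishing I would get by descending induction on $j$ from $n$ to $l+1$: the base $j=n$ is the hypothesis $H^0(\mathbb{P}^n,\mathcal{E}(k-3))=0$, and for $j<n$ the sequence $0\to\mathcal{E}|_{\mathbb{P}^{j+1}}(k-4)\to\mathcal{E}|_{\mathbb{P}^{j+1}}(k-3)\to\mathcal{E}|_{\mathbb{P}^j}(k-3)\to0$ gives $H^0(\mathbb{P}^j,\mathcal{E}|_{\mathbb{P}^j}(k-3))=0$ once $H^0(\mathbb{P}^{j+1},\mathcal{E}|_{\mathbb{P}^{j+1}}(k-3))=0$ (induction) and $H^1(\mathbb{P}^{j+1},\mathcal{E}|_{\mathbb{P}^{j+1}}(k-4))=0$, the latter being another instance of $d_{\min}(\mathcal{E}|_{\mathbb{P}^{j+1}})\le k$ with $j+1\ge4$.

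I do not expect a real obstacle: the argument is a chase of long exact sequences once part (1) supplies control on the $d_{\min}$ of an arbitrary linear section. The only delicate point is the numerical bookkeeping — ensuring that each twist $k-i$ that occurs lives on a projective space of dimension at least $i$, which is precisely where the hypotheses $l\ge2$ (forcing $j\ge3$ and $j+1\ge4$) and $d_{\min}\le k$ enter, and which is why the low‑dimensional degenerate cases must be separated off first.
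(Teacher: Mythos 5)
Your argument is correct and essentially the same as the paper's: part (1) is the paper's distinguished triangle $\RHom(G'(1),\mathcal{E}(d'))\to\RHom(G',\mathcal{E}(d'))\to\RHom(G'|_H,\mathcal{E}|_H(d'))\to$ for $G'=\oplus_{i=0}^{n-1}\mathcal{O}(i)$, merely unpacked into the individual cohomology groups $H^q(\mathcal{E}(d'-i))$, and part (2) is the same restriction-sequence induction over a flag of linear sections using (1). The only difference is presentational: you make explicit the descending propagation of the vanishing $H^0(\mathcal{E}|_{\mathbb{P}^j}(k-3))=0$ (via $H^1(\mathcal{E}|_{\mathbb{P}^{j+1}}(k-4))=0$, valid since $j+1\geqq 4$), a step the paper leaves implicit in its phrase ``we now obtain the statement by induction.''
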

\begin{proof}
(1) Set $G'=\oplus_{i=0}^{n-1} \mathcal{O}(i)$.
For any $d'\geqq d_{\min}$, we have the following distinguished triangle
\[\RHom(G'(1),\mathcal{E}(d'))
\to
\RHom(G',\mathcal{E}(d'))
\to
\RHom(G'|_H,\mathcal{E}|_H(d'))
\to.
\]
Since $\Ext^q(G'(1),\mathcal{E}(d'))=0$
and $\Ext^q(G',\mathcal{E}(d'))=0$
for $q>0$,
we obtain for $q>0$ that $\Ext^q(G'|_H,\mathcal{E}|_H(d'))=0$.
Therefore $d_{\min}$ for $\mathcal{E}|_H$ is less than or equal to $d_{\min}$ for $\mathcal{E}$.

(2) Suppose that $n\geqq 3$, and let $H$ be a hyperplane section of $\mathbb{P}^n$.
Since $n\geqq 3$ and $d_{\min}\leqq k$, we see that $H^1(\mathcal{E}(k-3))=0$,
Since $H^0(\mathcal{E}(k-3))=0$ by assumption,
we obtain $H^0(\mathcal{E}(k-2))\cong H^0(\mathcal{E}|_{H}(k-2))$.
Since the statement (1) holds, we now obtain the statement by induction.
\end{proof}

\begin{lemma}\label{s0linear}
Suppose that 
$\Hom(\mathcal{O}(d),\mathcal{E})=0$.
If there exists a non-zero element $s$ of 
$H^0(\mathcal{E}(1-d))$,
then 
the zero locus $(s)_0$ of $s$ is 
either empty or a (reduced) point.
Moreover if $(s)_0$ is a point, then $r\geqq n\geqq 2$.
\end{lemma}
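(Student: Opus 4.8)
The plan is to translate the section $s\in H^0(\mathcal{E}(1-d))$ into a nonzero morphism $\sigma:\mathcal{O}(d-1)\to\mathcal{E}$; since $\mathcal{E}$ is torsion-free, $\sigma$ is injective, and $(s)_0$ is precisely the locus where $\sigma$ fails to be a subbundle inclusion. I would control $(s)_0$ by three successive constraints: (a) it has no divisorial component; (b) at each of its points the local equations of $\sigma$ have linear part of maximal rank; (c) it contains at most one point. The common engine is restriction to a well-chosen line $L$: since $\mathcal{E}|_L$ is a nef bundle of degree $d$ on $\mathbb{P}^1$, we have $\mathcal{E}|_L\cong\bigoplus_{i=1}^{r}\mathcal{O}_L(a_i)$ with $a_i\geqq 0$ and $\sum a_i=d$, so no $a_i$ exceeds $d$ and hence $H^0(\mathcal{E}|_L(-d-1))=0$.

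For (a): if a prime divisor $D\subseteq(s)_0$ of degree $e\geqq 1$ existed, then $\sigma$ would vanish along $D$ and hence factor through $\mathcal{E}(1-d)\otimes\mathcal{O}(-D)=\mathcal{E}(1-d-e)$, which embeds into $\mathcal{E}(-d)$ because $e\geqq 1$; the image would be a nonzero element of $H^0(\mathcal{E}(-d))=\Hom(\mathcal{O}(d),\mathcal{E})$, contradicting the hypothesis. So $(s)_0$ has codimension $\geqq 2$. In particular, once $(s)_0\neq\emptyset$ this forces $n\geqq 2$, since on $\mathbb{P}^1$ a nonempty zero subscheme of a section is a divisor; and by Lemma~\ref{quotientTorsionFree} the cokernel $\mathcal{F}$ of $\sigma$ is torsion-free of rank $r-1$ with $\det\mathcal{F}\cong\mathcal{O}(1)$, locally free exactly off $(s)_0$ — useful background, though the estimate above is all I need below.

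For (b) and (c): fix a closed point $p\in(s)_0$, a local trivialization of $\mathcal{E}$ near $p$, and write $\sigma=(\sigma_1,\dots,\sigma_r)$ with $\sigma_i\in\mathfrak{m}_p$; let $\rho$ be the rank of the $r\times n$ matrix of linear parts $\bar\sigma_i\in\mathfrak{m}_p/\mathfrak{m}_p^2$. I would show $\rho=n$: otherwise some tangent direction at $p$ is annihilated by all $\bar\sigma_i$, and choosing a line $L\ni p$ in that direction with $L\not\subseteq(s)_0$ makes $s|_L$ a \emph{nonzero} section of $\mathcal{E}|_L(1-d)$ vanishing to order $\geqq 2$ at $p$, i.e.\ a nonzero section of $\mathcal{E}|_L(-d-1)$, which is impossible. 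Given $\rho=n$, Nakayama's lemma gives $(\sigma_1,\dots,\sigma_r)=\mathfrak{m}_p$, so $(s)_0$ equals the reduced point $p$ near $p$; since $p$ was arbitrary, $(s)_0$ is $0$-dimensional and reduced, and $r\geqq\rho=n$. Finally, if $(s)_0$ contained two distinct points, the line $L$ joining them would satisfy $L\not\subseteq(s)_0$ and $s|_L$ would be a nonzero section of $\mathcal{E}|_L(1-d)$ vanishing at both, again a nonzero section of $\mathcal{E}|_L(-d-1)$ — contradiction. Hence $(s)_0$ is empty or a single reduced point, and in the latter case $r\geqq n\geqq 2$.

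The one genuinely delicate point is arranging, in step (b), the auxiliary line $L$ through $p$ so that simultaneously its tangent direction lies in the kernel of the $\bar\sigma_i$ and $L\not\subseteq(s)_0$, so that $s|_L\not\equiv 0$. This needs that $(s)_0$ does not contain the whole linear subspace through $p$ spanned by that kernel, which I would extract from the codimension bound (a) together with a short dimension count; in the borderline situation where $(s)_0$ still carried a line-component through $p$, I would instead restrict to a $2$-plane containing that line, where the argument of (a) upgrades to an inclusion $\det\mathcal{E}|_{\mathbb{P}^2}\hookrightarrow\mathcal{E}|_{\mathbb{P}^2}$ and, via Lemma~\ref{detsub} and the triviality of nef bundles with vanishing first Chern class, forces a splitting of $\mathcal{E}|_{\mathbb{P}^2}$ incompatible with the previous constraints. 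Everything else is the routine ``degree-on-a-line'' estimate.
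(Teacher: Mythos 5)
Most of your argument is sound and runs parallel to the paper's: the codimension bound via $H^0(\mathcal{E}(-d))=0$, the degree-on-a-line estimates showing that a finite intersection $(s)_0\cap L$ must have length one and that two distinct points of $(s)_0$ would produce a nonzero section of $\mathcal{E}|_L(-d-1)$, and the rank computation giving reducedness and $r\geqq n$ at a point where the linear part has rank $n$. The genuine gap is exactly the case you yourself flag as borderline: ruling out a line $L_0\subseteq (s)_0$. By your own secant argument this is the only remaining case (if $(s)_0$ is not finite it is set-theoretically a positive-dimensional linear space, hence contains lines), and it is also the case where step (b) breaks down. Your proposed fix does not close it. Restricting to a $2$-plane $P\supseteq L_0$ with $s|_P\neq 0$ and dividing by the equation of $L_0$ does give a nonzero map $\mathcal{O}_P(d)\to\mathcal{E}|_P$, and Lemma~\ref{detsub} together with the triviality of nef bundles with trivial determinant yields $\mathcal{E}|_P\cong\mathcal{O}_P(d)\oplus\mathcal{O}_P^{\oplus r-1}$; but this is not ``incompatible with the previous constraints''. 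The hypothesis $\Hom(\mathcal{O}(d),\mathcal{E})=0$ lives on $\mathbb{P}^n$, not on $P$, and the splitting of $\mathcal{E}|_P$ contradicts neither it nor the codimension bound; in fact one gets, perfectly consistently, $(s)_0\cap P=L_0$ and $\mathcal{E}|_{L_0}\cong\mathcal{O}(d)\oplus\mathcal{O}^{\oplus r-1}$. No contradiction can be extracted from one plane, i.e.\ one normal direction, at a time.

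What makes the paper's proof work is that it uses all $c\geqq 2$ normal directions of the zero locus simultaneously: it first shows that $Z=(s)_0$ is a \emph{reduced linear} subspace $\mathbb{P}^{n-c}$ (the secant-length argument plus the affine-coordinate reducedness argument), so that $\mathcal{I}_Z/\mathcal{I}_Z^2\cong\mathcal{O}_Z(-1)^{\oplus c}$, and then invokes the surjection $\mathcal{E}^{\vee}(d-1)\to\mathcal{I}_Z$ coming from the very definition of the zero scheme. Restricting to a line $L_0\subseteq Z$ gives a surjection $\mathcal{E}^{\vee}|_{L_0}(d-1)\to\mathcal{O}_{L_0}(-1)^{\oplus c}$, while nefness forces $\mathcal{E}|_{L_0}\cong\mathcal{O}(d)\oplus\mathcal{O}^{\oplus r-1}$, so $\mathcal{E}^{\vee}|_{L_0}(d-1)$ has only one summand of negative degree and (as $d\geqq 1$) cannot surject onto $c\geqq 2$ copies of $\mathcal{O}_{L_0}(-1)$. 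This ``two or more directions at once'' obstruction is precisely what your plane-by-plane argument misses; to repair your proof you would need to pin down the scheme structure of $(s)_0$ along $L_0$ (reduced and linear) and then run this conormal-bundle argument — in effect, reproduce the second half of the paper's proof rather than appeal to a splitting on a single plane.
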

\begin{proof}
Set $Z=(s)_0$.
Since $H^0(\mathcal{E}(-d))=0$, 
$Z$ has codimension $c\geqq 2$ in $\mathbb{P}^n$.
Suppose that 
$Z$
is not empty.

We show that the length of the non-empty intersection 
$Z\cap L$
of 
$Z$
and a line $L$ is one unless 
$Z\cap L=L$.
Let $L$ be a line such that 
$Z\cap L$
is a non-empty finite set,
and let $l$ be the length of 
$Z\cap L$;
$l$ is a positive integer. Then we have $\mathcal{O}_L(l)$ as a subbundle of $\mathcal{E}|_L(1-d)$.
Thus $\mathcal{E}|_L$ has $\mathcal{O}_L(l+d-1)$ as a direct summand. On the other hand,
since $\mathcal{E}$ is nef, the degree of a direct summand of $\mathcal{E}$ is at most $d$.
Thus $l=1$.
This implies that
$Z$
in $\mathbb{P}^n$ has no secant lines that is not contained in 
$Z$,
and hence we see that 
$Z$
is a linear subspace $\mathbb{P}^{n-c}$
as sets.
Moreover we see that 
$Z$
is reduced.
Indeed, let $p$ be a point of 
$Z$
and let $I$ be the ideal sheaf of 
$Z\cap \mathbb{A}^n$
in a linear affine open subset $\mathbb{A}^n$ containing $p$.
Let $(x_1,\dots,x_n)$ be the affine coordinates of $\mathbb{A}^n$.
We may assume that $p=(0,\dots,0)$ and that the radical $\sqrt{I}$ of $I$ is $(x_1,\dots,x_c)$.
Let $l$ be the minimal integer such that $x_1^l\in I$,
and let $L$ be a line in $\mathbb{P}^n$ defined as the closure of the affine line
defined by $(x_2,\dots, x_n)$.
Then 
$Z\cap L$
is a non-empty finite subscheme of length at least $l$.
Thus we see, by the same argument as above, that $l=1$.
Hence $x_1\in I$. By the same way, we see that $x_i\in I$ for all $1\leqq i\leqq c$.
Therefore we conclude that 
$Z$ is reduced and thus 
$Z$
is a linear subscheme $\mathbb{P}^{n-c}$
of $\mathbb{P}^n$.

Let $\mathcal{I}$ be the ideal sheaf of 
$Z$
in $\mathbb{P}^n$. Then the conormal bundle
$\mathcal{I}/\mathcal{I}^2$ is isomorphic to $\mathcal{O}_{Z}(-1)^{\oplus c}$.
On the other hand, we have a surjection $\mathcal{E}^{\vee}(d-1)\to \mathcal{I}$.
Suppose that $Z$ contains a line $L_0$. Then we have a surjection
\[\mathcal{E}^{\vee}|_{L_0}(d-1)\to \mathcal{O}_{L_0}(-1)^{\oplus c}.
\]
In particular, $\mathcal{E}^{\vee}|_{L_0}(d-1)$ has an $\mathcal{O}_{L_0}(-1)$ as a quotient.
This implies that $\mathcal{E}^{\vee}|_{L_0}(d-1)$ is isomorphic to 
$\mathcal{O}_{L_0}(-1)\oplus \mathcal{O}_{L_0}(d-1)^{\oplus r-1}$
since $\mathcal{E}$ is nef.
However this means that $\mathcal{E}^{\vee}|_{L_0}(d-1)$ cannot have $\mathcal{O}_{L_0}(-1)^{\oplus c}$
as a quotient since $d\geqq 1$ and $c\geqq 2$. This is a contradiction.
Therefore $Z$ is a point.
\end{proof}

In the rest of this section, we assume that the base field $K$ is of characteristic zero.
Theorem~\ref{d=1onProSpace} below is a part of \cite[Theorem 1]{pswnef},
and is also a consequence of \cite[IV-2.2 Proposition]{ellia}.
We give a different proof of this result based on our framework:
general restrictions on $e_{l,j}$'s and $d_{\min}$ obtained so far
enable us to prove this theorem immediately.

\begin{thm}\label{d=1onProSpace}
Suppose that $d=1$, i.e.,
that $\det\mathcal{E}\cong \mathcal{O}(1)$. Then $\mathcal{E}$ is isomorphic to either $\mathcal{O}(1)\oplus \mathcal{O}^{\oplus r-1}$
or $T_{\mathbb{P}^n}(-1)\oplus \mathcal{O}^{\oplus r-n}$.
\end{thm}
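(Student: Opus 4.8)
The plan is to extract $\mathcal{E}$ directly from the standard resolution with respect to $(\mathcal{O},\mathcal{O}(1),\dots,\mathcal{O}(n))$, after peeling off two easy situations. If $r=1$ then $\mathcal{E}\cong\det\mathcal{E}\cong\mathcal{O}(1)$ and we are done, so from now on assume $r\geq 2$; then $d=1<r$, so Proposition~\ref{firstConstraint}(1)(b) and Corollary~\ref{ProSpaceResol} give $0\leq d_{\min}\leq 1$. Since $\det\mathcal{E}\cong\mathcal{O}(1)$, a nonzero element of $\Hom(\mathcal{O}(1),\mathcal{E})$ is a nonzero morphism $\det\mathcal{E}\to\mathcal{E}$, so if $\Hom(\mathcal{O}(1),\mathcal{E})\neq 0$ then Proposition~\ref{dsubExist} (its hypotheses hold on $\mathbb{P}^n$: $H^1(\mathcal{O}(1))=0$ and every nef bundle on $\mathbb{P}^n$ with trivial determinant is trivial) gives $\mathcal{E}\cong\mathcal{O}(1)\oplus\mathcal{O}^{\oplus r-1}$. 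Thus the substantive case is $\Hom(\mathcal{O}(1),\mathcal{E})=0$, equivalently $H^0(\mathcal{E}(-1))=0$, in which I aim to reach $T_{\mathbb{P}^n}(-1)\oplus\mathcal{O}^{\oplus r-n}$.

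So suppose $H^0(\mathcal{E}(-1))=0$. First I would rule out $d_{\min}=0$: in the standard resolution of $\mathcal{E}$ one has $e_{0,1}=\dim\Hom(\mathcal{O}(1),\mathcal{E})=0$ and $e_{0,j}=0$ for $j>d+d_{\min}=1$ by Proposition~\ref{firstConstraint}(1)(a), so $e_{0,j}=0$ for all $j\geq 1$, and repeated use of Proposition~\ref{easyConst} kills every higher term, forcing $\mathcal{E}\cong\mathcal{O}^{\oplus e_{0,0}}$ and contradicting $\det\mathcal{E}\cong\mathcal{O}(1)$. Hence $d_{\min}=1$, and I pass to the standard resolution of $\mathcal{E}(1)$. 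Here Proposition~\ref{firstConstraint}(1)(a) gives $e_{0,j}=0$ for $j>d+d_{\min}=2$, while $e_{0,2}=\dim\Hom(\mathcal{O}(2),\mathcal{E}(1))=\dim\Hom(\mathcal{O}(1),\mathcal{E})=0$; two applications of Proposition~\ref{easyConst} then give $e_{1,j}=0$ for all $j\geq 1$ and $e_{l,j}=0$ for all $l\geq 2$, so the resolution collapses to
\[0\to\mathcal{O}^{\oplus e_{1,0}}\to\mathcal{O}^{\oplus e_{0,0}}\oplus\mathcal{O}(1)^{\oplus e_{0,1}}\to\mathcal{E}(1)\to 0.\]
Since $d_{\min}=1>0$, $G_0=\mathcal{O}$, and $\mathcal{E}$ is nef, $\mathcal{E}(1)$ has no $\mathcal{O}$-quotient (the remark following Proposition~\ref{secondconstraint}), so Proposition~\ref{secondconstraint} lets me discard the $\mathcal{O}^{\oplus e_{0,0}}$ and reduce to $0\to\mathcal{O}^{\oplus a}\to\mathcal{O}(1)^{\oplus e_{0,1}}\to\mathcal{E}(1)\to 0$ with $a=e_{1,0}-e_{0,0}\geq 0$.

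Now comparing determinants gives $\mathcal{O}(e_{0,1})\cong\det\mathcal{E}(1)\cong\mathcal{O}(r+1)$, so $e_{0,1}=r+1$, and comparing ranks gives $a=1$. Thus $\mathcal{E}(1)$ is the cokernel of a morphism $\phi\colon\mathcal{O}\to\mathcal{O}(1)^{\oplus r+1}$, that is, of $r+1$ linear forms. If these had a common zero $p$, then tensoring the exact sequence $0\to\mathcal{O}\xrightarrow{\phi}\mathcal{O}(1)^{\oplus r+1}\to\mathcal{E}(1)\to 0$ with the residue field $k(p)$ would, by local freeness of $\mathcal{E}(1)$, keep it exact, yet the first map would be zero --- impossible; so the forms have no common zero, hence span $H^0(\mathcal{O}(1))$ and $r\geq n$. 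Applying a suitable automorphism of $\mathcal{O}(1)^{\oplus r+1}$ brings $\phi$ to the form $(x_0,\dots,x_n,0,\dots,0)$, so the Euler sequence identifies $\mathcal{E}(1)\cong T_{\mathbb{P}^n}\oplus\mathcal{O}(1)^{\oplus r-n}$, i.e.\ $\mathcal{E}\cong T_{\mathbb{P}^n}(-1)\oplus\mathcal{O}^{\oplus r-n}$.

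I expect the last paragraph to be the real content: passing from the abstract presentation $0\to\mathcal{O}\to\mathcal{O}(1)^{\oplus r+1}\to\mathcal{E}(1)\to 0$ to the explicit splitting, which is where local freeness of $\mathcal{E}$ and the Euler sequence actually enter; everything before it is bookkeeping with the $e_{l,j}$'s and $d_{\min}$ via the general lemmas already established. A second point needing care is the case split on $\Hom(\mathcal{O}(1),\mathcal{E})$: when this group is nonzero the resolution of $\mathcal{E}(1)$ need not be two-term, and one should invoke Lemma~\ref{detsub}/Proposition~\ref{dsubExist} rather than try to push the resolution machinery through.
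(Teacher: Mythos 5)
Your proof is correct and follows essentially the same route as the paper: bound $d_{\min}$ via Corollary~\ref{ProSpaceResol} and Proposition~\ref{firstConstraint}, split off the case $\Hom(\mathcal{O}(1),\mathcal{E})\neq 0$ via Proposition~\ref{dsubExist}, rule out $d_{\min}=0$, and then read $\mathcal{E}$ off the standard resolution of $\mathcal{E}(1)$ modified by Proposition~\ref{secondconstraint}. The only difference is that you spell out details the paper leaves implicit, namely the vanishing bookkeeping for the $e_{l,j}$ and the final identification of the cokernel of $\mathcal{O}\to\mathcal{O}(1)^{\oplus r+1}$ with $T_{\mathbb{P}^n}\oplus\mathcal{O}(1)^{\oplus r-n}$ via the no-common-zero argument and the Euler sequence, and these details are correct.
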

\begin{proof}
We see first that $d_{\min}\leqq 1$ by Corollary~\ref{ProSpaceResol}.
If $r=1$, then $\mathcal{E}\cong \mathcal{O}(1)$. Suppose that $r\geqq 2$. Then $d_{\min}\geqq 0$ by Proposition~\ref{firstConstraint} (1) (b).
Suppose that $\Hom (\mathcal{O}(1),\mathcal{E})\neq 0$.
Then $\mathcal{E}\cong \mathcal{O}(1)\oplus \mathcal{O}^{\oplus r-1}$
by Proposition~\ref{dsubExist}. Suppose that 
$\Hom (\mathcal{O}(1),\mathcal{E})=0$.
If $d_{\min}$ were zero,
the standard resolution of $\mathcal{E}$ with respect to $(\mathcal{O},\dots,\mathcal{O}(n))$ 
implies that $\mathcal{E}\cong \mathcal{O}^{\oplus r}$, which contradicts that $d=1$.
Therefore $d_{\min}=1$.
Then the standard resolution of $\mathcal{E}(1)$ modified by Proposition~\ref{secondconstraint} is 
\[0\to \mathcal{O}\to \mathcal{O}(1)^{\oplus r+1}\to \mathcal{E}(1)\to 0,\]
since $\det\mathcal{E}(1)\cong \mathcal{O}(r+1)$.
Then we see $r\geqq n$ and $\mathcal{E}\cong T_{\mathbb{P}^n}(-1)\oplus \mathcal{O}^{\oplus r-n}$,
because $\mathcal{E}$ is a vector bundle.
\end{proof}

\begin{thm}\label{oneO(d-1)}
Suppose that $\Hom(\mathcal{O}(d),\mathcal{E})=0$
and that $\Hom(\mathcal{O}(d-1),\mathcal{E})\neq 0$.
Then $\mathcal{E}$ satisfies one of the following:
\begin{enumerate}
\item $\mathcal{E}\cong \mathcal{O}(d-1)\oplus \mathcal{O}(1)\oplus \mathcal{O}^{\oplus r-2}$.
\item $\mathcal{E}$ fits in an exact sequence
\[
0\to \mathcal{O}(-1)\to \mathcal{O}(d-1)\oplus \mathcal{O}^{\oplus r}\to \mathcal{E}\to 0.
\]
\end{enumerate}
\end{thm}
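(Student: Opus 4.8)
The plan is to exploit the hypothesis $\Hom(\mathcal O(d-1),\mathcal E)\ne 0$ by picking a non-zero $\sigma\colon\mathcal O(d-1)\to\mathcal E$, with associated section $s\in H^0(\mathcal E(1-d))$, and to split into cases according to the zero locus $Z=(s)_0$. First, $d\ge 1$: one has $d\ge 0$ since $\mathcal E$ is nef, and if $d=0$ then $\det\mathcal E\cong\mathcal O$, so $\mathcal E\cong\mathcal O^{\oplus r}$ by the fact recalled after Proposition~\ref{dsubExist}, whence $\Hom(\mathcal O(d),\mathcal E)=H^0(\mathcal E)\ne 0$, a contradiction. By Lemma~\ref{s0linear}, $Z$ is either empty or a single reduced point, and these two alternatives will produce the two possibilities in the statement.

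\emph{Case $Z=\varnothing$.} Here $\sigma$ exhibits $\mathcal O(d-1)$ as a subbundle, so $\mathcal F:=\mathcal E/\mathcal O(d-1)$ is a vector bundle; it is nef, being a quotient of $\mathcal E$, and $\det\mathcal F\cong\mathcal O(1)$. By Theorem~\ref{d=1onProSpace}, $\mathcal F\cong\mathcal O(1)\oplus\mathcal O^{\oplus r-2}$ or $\mathcal F\cong T_{\mathbb P^n}(-1)\oplus\mathcal O^{\oplus r-1-n}$. In either case the extension $0\to\mathcal O(d-1)\to\mathcal E\to\mathcal F\to 0$ splits, since $H^1(\mathbb P^n,\mathcal O(j))=0$ for $n\ge 2$ and $\Ext^1(T_{\mathbb P^n}(-1),\mathcal O(d-1))\cong H^1(\mathbb P^n,\Omega^1_{\mathbb P^n}(d))=0$ for $d\ge 1$ by Bott's formula (the case $n=1$, where every bundle splits, being immediate). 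If $\mathcal F\cong\mathcal O(1)\oplus\mathcal O^{\oplus r-2}$ we are in case (1); if $\mathcal F\cong T_{\mathbb P^n}(-1)\oplus\mathcal O^{\oplus r-1-n}$, then adding $\mathcal O(d-1)\oplus\mathcal O^{\oplus r-1-n}$ to the Euler sequence $0\to\mathcal O(-1)\to\mathcal O^{\oplus n+1}\to T_{\mathbb P^n}(-1)\to 0$ exhibits $\mathcal E$ as in case (2).

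\emph{Case $Z=\{p\}$ a reduced point.} Then $r\ge n\ge 2$ by Lemma~\ref{s0linear}, and $\mathcal F:=\Coker\sigma$ is torsion-free (Lemma~\ref{quotientTorsionFree}), locally free off $p$, of rank $r-1$ with $c_1(\mathcal F)=1$; from the local normal form $s=(x_1,\dots,x_n,0,\dots,0)$ of $\sigma$ near $p$ one sees moreover that $\mathcal F$ has projective dimension $1$. The reduction I would use is that it suffices to produce a resolution $0\to\mathcal O(-1)\to\mathcal O^{\oplus r}\to\mathcal F\to 0$: taking the fibre product of $\mathcal E\twoheadrightarrow\mathcal F$ and $\mathcal O^{\oplus r}\twoheadrightarrow\mathcal F$ gives exact sequences $0\to\mathcal O(d-1)\to\mathcal W\to\mathcal O^{\oplus r}\to 0$ and $0\to\mathcal O(-1)\to\mathcal W\to\mathcal E\to 0$, the first of which splits because $H^1(\mathbb P^n,\mathcal O(d-1))=0$ ($n\ge2$), so $\mathcal E$ is as in case (2). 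Since $\mathcal F$ has projective dimension $1$, to get such a resolution it is enough to show $\mathcal F$ is generated by exactly $r$ global sections (the kernel of $\mathcal O^{\oplus r}\to\mathcal F$ is then a line bundle of degree $-1$, i.e.\ $\mathcal O(-1)$). For $n=2$ this can be done cleanly: $\mathcal F^{\vee\vee}$ is then a vector bundle which is nef —any $f\colon C\to\mathbb P^2$ and negative quotient line bundle of $f^*\mathcal F^{\vee\vee}$ modulo torsion would, through $f^*\mathcal E\to f^*\mathcal F\to f^*\mathcal F^{\vee\vee}$, yield a negative quotient line bundle of the nef bundle $f^*\mathcal E$ or a line-bundle quotient of a torsion sheaf—so by Theorem~\ref{d=1onProSpace} it is of one of the two listed shapes, after which Chern-class and local analysis at $p$ pin down $\mathcal F$; alternatively, when $r=2$ the Hartshorne--Serre correspondence gives $0\to\mathcal O(d-1)\to\mathcal E\to\mathcal I_p(1)\to 0$, which together with the Koszul resolution $0\to\mathcal O(-1)\to\mathcal O^{\oplus 2}\to\mathcal I_p(1)\to 0$ yields (2) at once. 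For $n\ge 3$ I would induct on $n$: restricting to a general hyperplane $H$ through $p$ one has $\caltor_1(\mathcal F,\mathcal O_H)=0$ and the restricted section still vanishes exactly at the reduced point $p$, so the inductive hypothesis applies to $\mathcal E|_H$, and lifting generators along the restriction maps (using the vanishing of the relevant first cohomology groups) produces the resolution of $\mathcal E$.

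The case $Z=\{p\}$ is the crux, and the hard part is precisely that $\mathcal F$ is only torsion-free, so Theorem~\ref{d=1onProSpace} cannot be applied to it directly: one must control $\mathcal F^{\vee\vee}$ (ruling out the shapes incompatible with $\mathcal E$ being a nef vector bundle satisfying $\Hom(\mathcal O(d),\mathcal E)=0$) and the finite-length modification $\mathcal F\subset\mathcal F^{\vee\vee}$, and show that $\mathcal F$ is generated by exactly $r$ global sections. I expect the base case $n=2$ (via reflexivity on a surface and the Serre correspondence) to be the most transparent, and the main technical burden to be in setting up the hyperplane induction together with the cohomology vanishings it requires.
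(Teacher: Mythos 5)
Your overall skeleton is the same as the paper's: establish $d\geqq 1$, take $\sigma:\mathcal{O}(d-1)\to\mathcal{E}$ with section $s$, split according to $(s)_0$ via Lemma~\ref{s0linear}, and in the empty case apply Theorem~\ref{d=1onProSpace} to the nef quotient bundle; that part, including the splitting via Bott's formula and the Euler-sequence manipulation, is correct. The genuine gap is in the case $(s)_0=\{p\}$, which you rightly call the crux but leave as a plan. Everything there hinges on producing a resolution $0\to\mathcal{O}(-1)\to\mathcal{O}^{\oplus r}\to\mathcal{F}\to 0$, i.e.\ on showing that $\mathcal{F}=\Coker\sigma$ is globally generated by \emph{exactly} $r$ sections; this is precisely the statement requiring proof, and the steps you sketch do not reach it. In particular you never address the possibility $H^0(\mathcal{F}(-1))\neq 0$: a priori $\mathcal{F}$ could contain $\mathcal{O}(1)$, and the paper devotes a separate subcase to exactly this (handled via the arguments of Proposition~\ref{generalO(1)double} and Corollary~\ref{O(1)double}). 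Likewise, for your $n=2$ base case, knowing $\mathcal{F}^{\vee\vee}\cong\mathcal{O}(1)\oplus\mathcal{O}^{\oplus r-2}$ or $T_{\mathbb{P}^2}(-1)\oplus\mathcal{O}^{\oplus r-3}$ does not ``pin down'' $\mathcal{F}$: among the colength-one elementary modifications of $\mathcal{O}(1)\oplus\mathcal{O}^{\oplus r-2}$ at $p$ is $\mathcal{O}(1)\oplus\mathfrak{m}_p\oplus\mathcal{O}^{\oplus r-3}$, which is \emph{not} globally generated; excluding it needs an explicit nefness argument (e.g.\ the composite $\mathcal{E}|_L\twoheadrightarrow\mathfrak{m}_p\otimes\mathcal{O}_L\twoheadrightarrow\mathcal{O}_L(-1)$ for a line $L$ through $p$ would be a negative quotient of a nef bundle), and nothing of this kind appears in the proposal.

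The inductive step $n\geqq 3$ is also not set up. To apply the theorem to $\mathcal{E}|_H$ you must first know $\Hom(\mathcal{O}_H(d),\mathcal{E}|_H)=0$, which is not inherited from $\Hom(\mathcal{O}(d),\mathcal{E})=0$; it requires an argument such as the restriction--splitting theorem \cite[Chap. 1, Theorem 2.3.2]{oss} (used in the paper's proof of Theorem~\ref{d2OnProSpace}). Even granting that $\mathcal{E}|_H$ falls into case (2), that resolution is not a priori adapted to $\sigma|_H$, and ``lifting generators'' has to (a) prove surjectivity of $H^0(\mathcal{F})\to H^0(\mathcal{F}|_H)$ via specified vanishings, (b) obtain generation at points off $H$ by varying $H$ through $p$, and (c) show $h^0(\mathcal{F})=r$, since a surjection $\mathcal{O}^{\oplus N}\to\mathcal{F}$ with $N>r$ has locally free kernel of rank $N-r+1$ and does not yield case (2). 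None of this is carried out. For comparison, the paper resolves the point case by a different route that bypasses these issues: projecting from $p$ and blowing up, the quotient becomes a pullback from $\mathbb{P}^{n-1}$, forcing it to be $T_{\mathbb{P}^{n-1}}(-1)\oplus\mathcal{O}^{\oplus r-n}$ and splitting $\mathcal{O}^{\oplus r-n}$ off $\mathcal{E}$; then the smoothness of $\mathbb{P}(\mathcal{F}_0)$ makes a Kodaira-vanishing bound ($d_{\min}\leqq 1$ for the non-locally-free sheaf $\mathcal{F}_0$) available, and the standard resolution modified by Proposition~\ref{secondconstraint} gives $0\to\mathcal{O}\to\mathcal{O}(1)^{\oplus n}\to\mathcal{F}_0(1)\to 0$ directly. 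You would need either to import an argument of comparable strength or to execute your induction in full before the proof can be considered complete.
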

\begin{proof}
Let $s$ be a non-zero element of $H^0(\mathcal{E}(1-d))$.
Let 
\[0\to \mathcal{O}(d-1)\to \mathcal{E}\to \mathcal{F}\to 0\]
be an exact sequence of coherent sheaves
defined by $s$.

If $(s)_0=\emptyset$, then $\mathcal{F}$ is a nef vector bundle with $\det\mathcal{F}\cong \mathcal{O}(1)$.
Theorem~\ref{d=1onProSpace} then implies that $\mathcal{E}$ satisfies (1) or (2) of the theorem.

If $(s)_0\neq \emptyset$, then $(s)_0$ is a point $z$ by Lemma~\ref{s0linear}.
Consider the projection from 
the point $z$.
By eliminating the indeterminacy, we get a morphism $f:Y\to \mathbb{P}^{n-1}$ where $\varphi:Y\to \mathbb{P}^n$
is the blowing-up 
at the point $z$.
Let $E$ be the exceptional divisor of $\varphi$.
We see that $f$ is a $\mathbb{P}^{1}$-bundle.
Then we get the following exact sequence
\[0\to\varphi^*\mathcal{O}(d-1)\otimes\mathcal{O}(E)\to \varphi^*\mathcal{E}\to \mathcal{G}\to 0\]
for a vector bundle $\mathcal{G}$ on $Y$.
We see that $\mathcal{G}|_F\cong \mathcal{O}_F^{\oplus r-1}$ for any fiber $F\cong \mathbb{P}^{1}$ of $f$.
Thus there exists a vector bundle $\mathcal{H}$ on $\mathbb{P}^{n-1}$ such that $\mathcal{G}\cong f^*\mathcal{H}$.
By restricting the exact sequence
\[0\to\varphi^*\mathcal{O}(d-1)\otimes\mathcal{O}(E)\to \varphi^*\mathcal{E}\to f^*\mathcal{H}\to 0\]
to the exceptional divisor $E$, we see that $\mathcal{H}$ fits in an exact sequence
\[0\to \mathcal{O}(-1)\to \mathcal{O}^{\oplus r}\to \mathcal{H}\to 0.
\]
Since $\mathcal{H}$ is a vector bundle on $\mathbb{P}^{n-1}$, 
we infer that $r\geqq n$ and that 
\[\mathcal{H}\cong T_{\mathbb{P}^{n-1}}(-1)\oplus \mathcal{O}^{\oplus r-n}.\]
Hence $\varphi^*\mathcal{E}$ has $\varphi^*\mathcal{O}^{\oplus r-n}$ as a direct summand,
and thus $\mathcal{E}$ has $\mathcal{O}^{\oplus r-n}$ as a direct summand. Therefore we have
\[\mathcal{E}\cong \mathcal{O}^{\oplus r-n}\oplus \mathcal{E}_0\]
for some nef vector bundle $\mathcal{E}_0$
of rank $n$ with $\det \mathcal{E}_0\cong \mathcal{O}(d)$.
We may assume that $d\geqq 2$ and that $s$ is a non-zero element of $H^0(\mathcal{E}_0(1-d))$.
Then we have an exact sequence
\[0\to \mathcal{O}(d-1)\to \mathcal{E}_0\to \mathcal{F}_0\to 0,\]
where $\mathcal{F}_0$ is a torsion-free coherent sheaf with $\det \mathcal{F}_0\cong \mathcal{O}(1)$.

We claim here that $\mathbb{P}(\mathcal{F}_0)$ is nonsingular, although $\mathcal{F}_0$ is not a vector bundle.
Let $\pi:\mathbb{P}(\mathcal{E}_0)\to \mathbb{P}^n$ be the projection.
It is clear that $\mathbb{P}(\mathcal{F}_0)\cap \pi^{-1}(\mathbb{P}^n\setminus \{z\})$ is nonsingular.
We may assume that, locally around $z$, the section $s$ of $\mathcal{E}_0(1-d)$ can be written as 
\[s=z_1e_1+z_2e_2+\dots+z_ne_n,\]
where $(z_1,\dots,z_n)$ is a local coordinate system around $z$ with $z=(0,\dots,0)$
and $(e_1,\dots,e_n)$ is a locally free basis of $\mathcal{E}_0(1-d)$ around $z$.
Regarding $(e_1;\cdots;e_n)$ as a homogeneous coordinate system on $\pi^{-1}(z)\cong \mathbb{P}^{n-1}$,
we see that $ds=dz_1e_1+dz_2e_2+\dots+dz_ne_n$ on the cover of the fiber $\pi^{-1}(z)$
and that $ds$ does not vanish on the fiber $\pi^{-1}(z)$.
Therefore we conclude that $\mathbb{P}(\mathcal{F}_0)$ is also nonsingular along the fiber $\pi^{-1}(z)$.

Now the Kodaira vanishing theorem implies that $d_{\min}$ for $\mathcal{F}_0$ is less than two
by the similar argument as in the proof of Lemma~\ref{KodairaOnProjectiveSpace} (1).

If $H^0(\mathcal{F}_0(-1))\neq 0$, then the similar argument as in the proof of 
Proposition~\ref{generalO(1)double} and Corollary~\ref{O(1)double}
implies that $\mathcal{E}_0\cong \mathcal{O}(d-1)\oplus \mathcal{O}(1)\oplus \mathcal{O}^{\oplus n-2}$.
Thus we obtain the case (1) of the theorem.

Suppose that $H^0(\mathcal{F}_0(-1))=0$.  
We claim here that $d_{\min}$ for $\mathcal{F}_0$ is one.
Indeed, if $\rk \mathcal{F}_0=1$, then we see that $\mathcal{F}_0\cong \mathfrak{m}_z(1)$,
where $\mathfrak{m}_z$ is the ideal sheaf of $z$,
and the claim follows.
If $\rk \mathcal{F}_0\geqq 2$,
then we first infer that $d_{\min}$ for $\mathcal{F}_0$ is greater than or equal to zero
by the similar argument as in the proof of Proposition~\ref{firstConstraint},
since $\mathcal{F}_0$ is a torsion-free quotient of a nef vector bundle $\mathcal{E}_0$.
If $d_{\min}$ for $\mathcal{F}_0$ were zero,
then the standard resolution of $\mathcal{F}_0$ shows that $\mathcal{F}_0$ would be
isomorphic to $\mathcal{O}^{\oplus n-1}$,
which contradicts the fact that $\det\mathcal{F}_0\cong \mathcal{O}(1)$.
Therefore we conclude that the claim holds.
Then the standard resolution of 
$\mathcal{F}_0(1)$ modified according to Proposition~\ref{secondconstraint} is 
\[
0\to \mathcal{O}\to \mathcal{O}(1)^{\oplus n}\to \mathcal{F}_0(1)\to 0,
\]
which implies that $\mathcal{E}$ is in the case (2) of the theorem.
\end{proof}

The following  is the main part of \cite[Theorem 1]{pswnef} of Peternell-Szurek-Wi\'{s}niewski.
Based on our framework, we give a different proof of this result.
See Remark~\ref{avoidmisunderstanding1} for the seeming difference of Theorem~\ref{d2OnProSpace}
and \cite[Theorem 1]{pswnef}.
\begin{thm}\label{d2OnProSpace}
Suppose that $d=2$, i.e., that $\det\mathcal{E}\cong \mathcal{O}(2)$. Then $\mathcal{E}$ satisfies one of the following:
\begin{enumerate}
\item[(1)] $\mathcal{E}\cong \mathcal{O}(2)\oplus\mathcal{O}^{\oplus r-1}.$
\item[(2)] $\mathcal{E}\cong \mathcal{O}(1)^{\oplus 2}\oplus\mathcal{O}^{\oplus r-2}.$
\item[(3)] $\mathcal{E}$ fits in an exact sequence
\[0\to\mathcal{O}(-1)\to\mathcal{O}(1)\oplus\mathcal{O}^{\oplus r}\to \mathcal{E}\to 0.\]
\item[(4)] $\mathcal{E}$ fits in an exact sequence
\[0\to\mathcal{O}(-1)^{\oplus 2}\to\mathcal{O}^{\oplus r+2}\to \mathcal{E}\to 0.\]
\item[(5)] $\mathcal{E}$ fits in an exact sequence
\[0\to\mathcal{O}(-2)\to\mathcal{O}^{\oplus r+1}\to \mathcal{E}\to 0.\]
\item[(6)] $n=3$ and $\mathcal{E}$ fits in an exact sequence
\[0\to\mathcal{O}(-2)\to\mathcal{O}(-1)^{\oplus 4}\to \mathcal{O}^{\oplus r+3}\to \mathcal{E}\to 0.\]
\end{enumerate}
\end{thm}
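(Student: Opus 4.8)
The plan is to run the case analysis on $\Hom(\mathcal{O}(2),\mathcal{E})$, $\Hom(\mathcal{O}(1),\mathcal{E})$, and on $d_{\min}$, in each instance reading the conclusion off the standard resolution of $\mathcal{E}(d_{\min})$. If $r=1$ then $\mathcal{E}\cong\mathcal{O}(2)$, which is (1); so assume $r\geq 2$. If $\Hom(\mathcal{O}(2),\mathcal{E})=\Hom(\det\mathcal{E},\mathcal{E})\neq 0$, then Proposition~\ref{dsubExist} applies (its hypotheses hold on $\mathbb{P}^n$, as $H^1(\mathbb{P}^n,\mathcal{O}(2))=0$ and a nef bundle with trivial determinant is a direct sum of copies of $\mathcal{O}$) and gives $\mathcal{E}\cong\mathcal{O}^{\oplus r-1}\oplus\mathcal{O}(2)$, again (1). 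So suppose $\Hom(\mathcal{O}(2),\mathcal{E})=0$; if in addition $\Hom(\mathcal{O}(1),\mathcal{E})\neq 0$, then Theorem~\ref{oneO(d-1)} applies with $d=2$, and its two conclusions are exactly (2) and (3). Thus it remains to treat the case $\Hom(\mathcal{O}(1),\mathcal{E})=\Hom(\mathcal{O}(2),\mathcal{E})=0$, i.e.\ $H^0(\mathcal{E}(-1))=H^0(\mathcal{E}(-2))=0$.

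In that case $d_{\min}\leq 2$ by Corollary~\ref{ProSpaceResol}, and $d_{\min}\geq 1$: if $d_{\min}\leq 0$ then $e_{0,j}=\dim\Hom(\mathcal{O}(j),\mathcal{E}(d_{\min}))=h^0(\mathcal{E}(d_{\min}-j))=0$ for all $j\geq 1$ (since $d_{\min}-j\leq -1$, $H^0(\mathcal{E}(k))\hookrightarrow H^0(\mathcal{E}(k+1))$, and $H^0(\mathcal{E}(-1))=0$), so Proposition~\ref{easyConst} collapses the standard resolution to $\mathcal{E}(d_{\min})\cong\mathcal{O}^{\oplus e_{0,0}}$, incompatible with $\det\mathcal{E}\cong\mathcal{O}(2)$ and $r\geq 2$. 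Since $d_{\min}>0$ and $\mathcal{E}$ is nef, $\mathcal{E}(d_{\min})$ admits no $\mathcal{O}$ as a quotient, so Proposition~\ref{secondconstraint} applies. If $d_{\min}=1$, then $e_{0,j}=h^0(\mathcal{E}(1-j))=0$ for $j\geq 2$, whence by Proposition~\ref{easyConst} $e_{l,j}=0$ for $l\geq 2$; after Proposition~\ref{secondconstraint} the standard resolution of $\mathcal{E}(1)$ reads $0\to\mathcal{O}^{\oplus e_{1,0}-e_{0,0}}\to\mathcal{O}(1)^{\oplus e_{0,1}}\to\mathcal{E}(1)\to 0$, and comparing determinants and ranks gives $e_{0,1}=r+2$ and $e_{1,0}-e_{0,0}=2$; twisting by $\mathcal{O}(-1)$ yields (4).

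The case $d_{\min}=2$ is the heart of the proof. Here $e_{0,j}=h^0(\mathcal{E}(2-j))=0$ for $j\geq 3$, so the standard resolution of $\mathcal{E}(2)$ uses only $\mathcal{O}$, $\mathcal{O}(1)$, $\mathcal{O}(2)$; by Proposition~\ref{easyConst} it has length $\leq 2$, with $\mathcal{P}_2$ a sum of copies of $\mathcal{O}$ and $\mathcal{P}_1$ a sum of copies of $\mathcal{O}$ and $\mathcal{O}(1)$, and Proposition~\ref{secondconstraint} strips the $\mathcal{O}$-summand off $\mathcal{P}_0$. The remaining steps are: (i) show $e_{0,1}'=0$, i.e.\ $\mathcal{P}_0'=\mathcal{O}(2)^{\oplus e_{0,2}'}$ — equivalently that $\mathcal{E}$ is globally generated. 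One uses that $\mathcal{E}(2)$ is $0$-regular by the Kodaira-type vanishing of Lemma~\ref{KodairaOnProjectiveSpace}, together with a zero-locus analysis of an offending section of $\mathcal{E}(1)$ in the spirit of Lemma~\ref{s0linear} and Lemma~\ref{nefquotient} applied to the resulting quotient of $\mathcal{E}$, to contradict nefness; this also rules out the \emph{a priori} possible shape with $h^0(\mathcal{E})=0$. (ii) Because $\mathcal{E}(2)$ is locally free, the minimal resolution $0\to\mathcal{O}^{\oplus e_{2,0}'}\to\mathcal{O}^{\oplus e_{1,0}'}\oplus\mathcal{O}(1)^{\oplus e_{1,1}'}\to\mathcal{O}(2)^{\oplus e_{0,2}'}\to\mathcal{E}(2)\to 0$ is locally split, so each syzygy is a subbundle; minimality makes the top differential an inclusion $\mathcal{O}^{\oplus e_{2,0}'}\hookrightarrow\mathcal{O}(1)^{\oplus e_{1,1}'}$, and for $\mathcal{E}(2)$ to be locally free the cokernel of each differential must again inject as a subbundle into the next free module — a requirement that restricts which numerically admissible shapes are realizable. (iii) Determinants and ranks give $2e_{1,0}'+e_{1,1}'-2e_{2,0}'=2$; combining this with the constraints of (ii), with the values of $h^0(\mathcal{E}(k))$ ($k=0,1,2$) obtained from the restriction of $\mathcal{E}$ to a general line (where it is $\mathcal{O}(2)\oplus\mathcal{O}^{\oplus r-1}$ or $\mathcal{O}(1)^{\oplus 2}\oplus\mathcal{O}^{\oplus r-2}$) and to hyperplanes via Lemma~\ref{hyperplaneIsom}, one finds that $(\mathcal{P}_1',\mathcal{P}_2')$ is either $(\mathcal{O},0)$ — giving $0\to\mathcal{O}\to\mathcal{O}(2)^{\oplus r+1}\to\mathcal{E}(2)\to 0$, i.e.\ (5) after twisting — or $(\mathcal{O}(1)^{\oplus 4},\mathcal{O})$; in the latter the inclusion $\mathcal{O}\hookrightarrow\mathcal{O}(1)^{\oplus 4}$ is given by four linear forms with no common zero, which forces $n\leq 3$, while $n\geq 3$ (else the projective dimension of $\mathcal{E}(2)$ is $\leq n-1\leq 1$ and $\mathcal{P}_2'=0$), so $n=3$ and we obtain (6). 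The solution $(\mathcal{O}(1)^{\oplus 2},0)$ is discarded: it would give $0\to\mathcal{O}(-1)^{\oplus 2}\to\mathcal{O}^{\oplus r+2}\to\mathcal{E}\to 0$, for which one checks $d_{\min}\leq 1$, contrary to hypothesis.

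The main obstacle is step (i) of the $d_{\min}=2$ case, namely proving $e_{0,1}'=0$ — equivalently, the global generation of $\mathcal{E}$ — since it is this that makes $\mathcal{P}_0'$ pure of type $\mathcal{O}(2)$ and thereby unlocks the numerical bookkeeping of steps (ii)--(iii); everything else is a direct application of the constructions of \S\ref{Preliminaries} and \S\ref{UpperBound} and of the results already established in this section.
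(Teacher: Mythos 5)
Your reduction to the hard case is exactly the paper's: cases (1)--(3) via Proposition~\ref{dsubExist} and Theorem~\ref{oneO(d-1)}, the bounds $1\leqq d_{\min}\leqq 2$, and the $d_{\min}=1$ analysis giving (4) are all correct and coincide with the published argument. The gap is in your treatment of $d_{\min}=2$, and it sits precisely at the two steps you label (i) and (iii). For (i), the claim $e_{0,1}'=0$ amounts to the surjectivity of the multiplication map $H^0(\mathcal{E})\otimes H^0(\mathcal{O}(1))\to H^0(\mathcal{E}(1))$ (not literally to global generation of $\mathcal{E}$), and the tools you invoke do not reach it: $0$-regularity of $\mathcal{E}(2)$ is automatic from $d_{\min}\leqq 2$ and only controls $H^0(\mathcal{E}(2))\otimes H^0(\mathcal{O}(1))\to H^0(\mathcal{E}(3))$, while Lemma~\ref{s0linear} concerns sections of $\mathcal{E}(1-d)=\mathcal{E}(-1)$, which vanish by hypothesis in this branch; a section of $\mathcal{E}(1)$ lying outside the image of the multiplication map produces no quotient of negative degree, so no nefness contradiction is available ``in the spirit of'' Lemmas~\ref{s0linear} and~\ref{nefquotient}. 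For (iii), the relation $2e_{1,0}'+e_{1,1}'-2e_{2,0}'=2$ admits infinitely many solutions (e.g.\ $(1,2,1)$, $(0,6,2)$, $(2,2,2)$, \dots), and the data you propose to break the tie with cannot be obtained as claimed: $h^0(\mathcal{E}(k))$ is not computable from the restriction to a general line, and Lemma~\ref{hyperplaneIsom} plus Riemann--Roch on a plane section only yields the inequality $h^0(\mathcal{E})\geqq r+1$; moreover $e_{1,1}'$ and $e_{2,0}'$ are syzygy invariants that no amount of $h^0$ bookkeeping determines.

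What is missing is exactly the content the paper supplies at this point: the inductive computation showing $H^q(\mathcal{E}(1-n))\cong K$ for $q=n-1$ and $0$ for other $q>0$ --- proved by restriction to hyperplanes, the splitting criterion, Serre duality/nefness to kill $H^n$, and Riemann--Roch on $\mathbb{P}^2$ with $c_2(\mathcal{E})\leqq 4$ --- \emph{except} in the exceptional case $n=3$, $H^1(\mathcal{E}(-2))\cong K$. This dichotomy is then fed into the Bondal spectral sequence for $\mathcal{E}(1)$, which produces the resolutions of types (5) and (6) (after a nefness argument cancelling redundant $\mathcal{O}$-summands); in particular it is this cohomological dichotomy, not resolution numerology, that separates case (5) from case (6). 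Your proposal has no substitute for this computation, so as written the $d_{\min}=2$ case is not proved. (Two smaller points: your exclusion of the shape $(\mathcal{O}(1)^{\oplus 2},0)$ via ``then $d_{\min}\leqq 1$'' is fine, and your observation that four linear forms without common zero force $n\leqq 3$ is correct; but the parenthetical reason offered for $n\geqq 3$ in that branch, namely that projective dimension $\leqq n-1$ forces $\mathcal{P}_2'=0$ on $\mathbb{P}^2$, is not justified in the framework of the standard resolution over $A$ and would need its own argument.)
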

\begin{proof}
Suppose that $\Hom(\mathcal{O}(2),\mathcal{E})\neq 0$.
Then we see that $\mathcal{E}\cong \mathcal{O}(2)\oplus\mathcal{O}^{\oplus r-1}$
by Proposition~\ref{dsubExist}.
In the following, we assume that $\Hom(\mathcal{O}(2),\mathcal{E})=0$.
If 
$\dim \Hom(\mathcal{O}(1),\mathcal{E})\neq 0$,
then Theorem~\ref{oneO(d-1)} shows that 
$\mathcal{E}$ is either in the case (2) or in the case (3) of the theorem.
We assume that 
$\dim \Hom(\mathcal{O}(1),\mathcal{E})=0$ 
in the following.
If $d_{\min}\leqq 0$, then the standard resolution of $\mathcal{E}$ implies that 
$\mathcal{E}\cong \mathcal{O}^{\oplus r}$,
which contradicts the assumption $d=2$.
Thus $d_{\min}\geqq 1$.
Then we have $n\geqq 2$.
We also see $d_{\min}\leqq 2$ by Corollary~\ref{ProSpaceResol}.

Suppose that $d_{\min}=1$.
Then the standard resolution of $\mathcal{E}(1)$ modified according to Proposition~\ref{secondconstraint} together with $\mathcal{O}(-1)$-twist 
is 
\[0\to\mathcal{O}(-1)^{\oplus e_{1,0}-e_{0,0}}\to\mathcal{O}^{\oplus e_{0,1}}\to \mathcal{E}\to 0.\]
Since $d=2$, we have $e_{1,0}-e_{0,0}=2$, and thus $e_{0,1}=r+2$. This is the case (4) of the theorem.

In the following, we assume that $d_{\min}=2$. 
We shall apply to $\mathcal{E}(1)$ the Bondal spectral sequence \cite[Theorem 1]{MR3275418}
\[
E_2^{p,q}=\caltor_{-p}^A(\Ext^q(G,\mathcal{E}(1)),G)
\Rightarrow
E^{p+q}=
\begin{cases}
\mathcal{E}(1)& \textrm{if}\quad  p+q= 0\\
0& \textrm{if}\quad  p+q\neq 0.
\end{cases}
\]
So 
we first claim that, under the assumption that $d_{\min}$ for $\mathcal{E}$ is two, equality
\[H^q(\mathcal{E}(1-n))=
\begin{cases}
K& \textrm{if}\quad  q= n-1\\
0& \textrm{if}\quad  q>0\textrm{ and }q\neq n-1
\end{cases}
\]
holds unless $n=3$ and $H^1(\mathcal{E}(-2))\cong K$.

We shall prove this claim by induction on $n$
unless $n=3$ and $H^1(\mathcal{E}(-2))\cong K$.
Let $H$ be a hyperplane in $\mathbb{P}^n$.
We have an exact sequence
\[
0\to \mathcal{E}(1-n)\to \mathcal{E}(2-n)\to \mathcal{E}|_H(1-(n-1))\to 0.
\]
Since $d_{\min}=2$, we see that $H^q(\mathcal{E}(2-n))=0$ for all $q>0$
and that $H^q(\mathcal{E}(1-n))\neq 0$ for some $q>0$.

Suppose that $n=2$.
The Riemann-Roch formula for a vector bundle $\mathcal{E}$ of rank $r$ on $\mathbb{P}^2$
is
\[\chi(\mathcal{E}
)=
r+\frac{1}{2}d(d+3)
-c_2(\mathcal{E}).\]
Since $d=2$ by assumption, 
the above formula 
implies that 
$h^0(\mathcal{E})=\chi(\mathcal{E})=r+5-c_2(\mathcal{E})$.
Since $0\leqq H(\mathcal{E})^{r+1}=c_1(\mathcal{E})^2-c_2(\mathcal{E})$, we have $c_2(\mathcal{E})\leqq 4$.
Hence 
$h^0(\mathcal{E})
\geqq
r+1$.
Since $h^0(\mathcal{E}|_H)=r+2$, $h^0(\mathcal{E}(-1))=0$, and $h^q(\mathcal{E}|_H)=0$ for all $q>0$,
we see that $h^q(\mathcal{E}(-1))=0$ for all $q\geqq 2$,
that $h^1(\mathcal{E}(-1))=1$, and that $h^0(\mathcal{E})=r+1$.
Hence the claim holds for $n=2$.

Suppose that $n\geqq 3$ and that the claim 
holds
for $n-1$.
Since we have $H^q(\mathcal{E}(2-n))=0$ for all $q\geqq 0$, 
we see that 
\[H^q(\mathcal{E}(1-n))\cong H^{q-1}(\mathcal{E}|_H(1-(n-1))) \textrm{ for all }q\geqq 1.\]
The point here is to show that $d_{\min}$ for $\mathcal{E}|_H$ is two,
unless $n=3$ and $H^1(\mathcal{E}(-2))\cong K$.
Note first that $d_{\min}$ for $\mathcal{E}|_H$ is less than or equal to two
by Lemma~\ref{hyperplaneIsom} (1)
and that we have proved the theorem in case $d_{\min}\leqq 1$.
Now suppose that $d_{\min}$ for $\mathcal{E}|_H$ is less than one.
Then $\mathcal{E}|_H$ splits,
so that
$\mathcal{E}$ also splits by \cite[Chap. 1, Theorem 2.3.2]{oss}.
This contradicts that $d_{\min}=2$.
Suppose that $d_{\min}$ for $\mathcal{E}|_H$ is one.
Then 
$\mathcal{E}|_H$ does not split.
We have $H^{q-1}(\mathcal{E}|_H(1-(n-1)))=0$ for all $q\geqq 2$.
Hence $H^{q}(\mathcal{E}(1-n))=0$ for all $q\geqq 2$.
Since  $H^{q}(\mathcal{E}(1-n))\neq 0$ for some $q\geqq 1$,
we see that $H^1(\mathcal{E}(1-n))\neq 0$.
Thus $H^0(\mathcal{E}|_H(1-(n-1)))\neq 0$.
Hence $n=3$ since $\mathcal{E}|_H$ does not split.
Then $\mathcal{E}|_H$ is in the case (3) of the theorem,
and thus $H^0(\mathcal{E}|_H(-1))\cong K$.
Hence $H^1(\mathcal{E}(-2))\cong K$.
This shows that $d_{\min}$ for $\mathcal{E}|_H$ is two
unless $n=3$ and $H^1(\mathcal{E}(-2))\cong K$.
Now the claim holds by induction,
because $H^0(\mathcal{E}|_H(1-(n-1))=0$ if $d_{\min}$ for $\mathcal{E}|_H$ is two.

We shall show that $\mathcal{E}$ is in the case (5) of the theorem
unless $n=3$ and $H^1(\mathcal{E}(-2))\cong K$.
First, by the claim above and the assumption that $h^0(\mathcal{E}(-1))=0$, we see that 
\[\Ext^q(G,\mathcal{E}(1))=
\begin{cases}
\Hom(\mathcal{O},\mathcal{E}(1))\oplus\Hom(\mathcal{O}(1),\mathcal{E}(1))& \textrm{if}\quad  q=0\\
\Ext^{n-1}(G_n,\mathcal{E}(1))=K& \textrm{if}\quad  q= n-1\\
0& \textrm{if}\quad  q>0\textrm{ and }q\neq n-1.\\
\end{cases}
\]
Hence we have $E_2^{p,q}=0$ unless $q= n-1$ or $0$.
As we have shown in the proof of \cite[Proposition 1]{MR3275418},
we also infer that 
\[
E_2^{p,n-1}=\mathcal{H}^p(\Ext^{n-1}(G,\mathcal{E}(1))\lotimes_A G)
=
\begin{cases}
\mathcal{O}(-1)& \textrm{if}\quad  p= -n\\
0& \textrm{if}\quad  p\neq -n.
\end{cases}
\]
We finally see that a right $A$-module $\Hom(G,\mathcal{E}(1))$ has a projective resolution of the following form
\[
0\to P_0^{\oplus f_{1,0}}\to P_0^{\oplus f_{0,0}}\oplus P_1^{\oplus f_{0,1}}\to \Hom (G,\mathcal{E}(1))\to 0,
\]
where $P_0$ and $P_1$ are as in \S~\ref{Preliminaries},
$f_{0,j}=\dim \Hom(G_j, \mathcal{E}(1))$ ($j=0,1$), and $f_{1,0}=(n+1)f_{0,1}$.
Hence we see that
\[E_2^{p,0}=
\begin{cases}
\Ker (\mathcal{O}^{\oplus f_{1,0}}\to \mathcal{O}^{\oplus f_{0,0}}\oplus \mathcal{O}(1)^{\oplus f_{0,1}})& \textrm{if}\quad  p= -1\\
\Coker (\mathcal{O}^{\oplus f_{1,0}}\to \mathcal{O}^{\oplus f_{0,0}}\oplus \mathcal{O}(1)^{\oplus f_{0,1}})& \textrm{if}\quad  p= 0\\
0& \textrm{if}\quad p\neq -1, 0.
\end{cases}
\]
Thus we infer that
\[E_{\infty}^{p,q}=
\begin{cases}
E_n^{-n,n-1}=\Ker (\mathcal{O}(-1)\to E_2^{0,0})& \textrm{if}\quad  (p,q)= (-n,n-1)\\
E_n^{0,0}=\Coker (\mathcal{O}(-1)\to E_2^{0,0})& \textrm{if}\quad  (p,q)= (0,0)\\
E_2^{-1,0}& \textrm{if}\quad (p,q)=(-1,0)\\
0&\textrm{otherwise}.
\end{cases}
\]
The Bondal spectral sequence then shows that
\[E_{\infty}^{p,q}=
\begin{cases}
\mathcal{E}(1)& \textrm{if}\quad  (p,q)= (0,0)\\
0&\textrm{otherwise}.
\end{cases}
\]
This shows that we have exact sequences
\begin{gather*}
0\to \mathcal{O}(-1)\to E_2^{0,0}\to \mathcal{E}(1)\to 0,\\
0\to \mathcal{O}^{\oplus f_{1,0}}\to \mathcal{O}^{\oplus f_{0,0}}\oplus \mathcal{O}(1)^{\oplus f_{0,1}}\to E_2^{0,0}\to 0.
\end{gather*}
Therefore we get an exact sequence
\[
0\to \mathcal{O}(-1)\oplus \mathcal{O}^{\oplus f_{1,0}}\to \mathcal{O}^{\oplus f_{0,0}}\oplus \mathcal{O}(1)^{\oplus f_{0,1}}
\to \mathcal{E}(1)\to 0.
\]
Since $\det\mathcal{E}(1)\cong \mathcal{O}(r+2)$, we have $f_{0,1}=r+1$.
We claim here that the composite of the inclusion $\mathcal{O}^{\oplus f_{1,0}}\to \mathcal{O}(-1)\oplus \mathcal{O}^{\oplus f_{1,0}}$,
the morphism 
$\mathcal{O}(-1)\oplus \mathcal{O}^{\oplus f_{1,0}}\to \mathcal{O}^{\oplus f_{0,0}}\oplus \mathcal{O}(1)^{\oplus f_{0,1}}$,
and the projection $\mathcal{O}^{\oplus f_{0,0}}\oplus \mathcal{O}(1)^{\oplus f_{0,1}}\to \mathcal{O}^{\oplus f_{0,0}}$ is surjective.
Assume, to the contrary, that the composite is not surjective. Then there exists a surjection $\mathcal{O}^{\oplus f_{0,0}}\oplus \mathcal{O}(1)^{\oplus f_{0,1}}
\to \mathcal{O}$ such that the composite 
$\mathcal{O}^{\oplus f_{1,0}}\to \mathcal{O}(-1)\oplus \mathcal{O}^{\oplus f_{1,0}}\to \mathcal{O}^{\oplus f_{0,0}}\oplus \mathcal{O}(1)^{\oplus f_{0,1}}
\to \mathcal{O}$ is zero. 
The morphism $\mathcal{O}(-1)\oplus \mathcal{O}^{\oplus f_{1,0}}\to \mathcal{O}^{\oplus f_{0,0}}\oplus \mathcal{O}(1)^{\oplus f_{0,1}}
\to \mathcal{O}$ then induces a morphism $\mathcal{O}(-1)\to \mathcal{O}$,
whose quotient is either $\mathcal{O}_{\mathbb{P}^n}$ or $\mathcal{O}_H$ for some hyperplane $H$ in $\mathbb{P}^n$.
This implies that $\mathcal{E}(1)$ have $\mathcal{O}_{\mathbb{P}^n}$ or $\mathcal{O}_H$ as a quotient, 
which contradicts that $\mathcal{E}$ is nef.
Therefore the claim holds, and we can modify the sequence above to
\[
0\to \mathcal{O}(-1)\oplus \mathcal{O}^{\oplus f_{1,0}-f_{0,0}}\to \mathcal{O}(1)^{\oplus r+1}
\to \mathcal{E}(1)\to 0.
\]
By looking at ranks, we infer that $f_{1,0}-f_{0,0}=0$, and we get 
the case (5) of the theorem.

Finally suppose that $n=3$ and that $H^1(\mathcal{E}(-2))\cong K$.
Then as we have seen above, we may assume that $d_{\min}$ for $\mathcal{E}|_H$ is one.
We have a distinguished triangle
\[
\RHom(G,\mathcal{E}(1))\to \RHom(G,\mathcal{E}(2))\to \RHom(\oplus_{i=-1}^{n-1}\mathcal{O}_H(i),\mathcal{E}|_H(1))\to.
\]
Hence we see that $\Ext^{q}(G,\mathcal{E}(1))=0$ for all $q\geqq 2$.
Since $\Ext^1(G,\mathcal{E}(2))=0$, we have $\Ext^1(G,\mathcal{E}(1))=\Ext^1(G_n,\mathcal{E}(1))\cong K$.
Therefore we have 
\[\Ext^q(G,\mathcal{E}(1))=
\begin{cases}
\Hom(\mathcal{O},\mathcal{E}(1))\oplus\Hom(\mathcal{O}(1),\mathcal{E}(1))& \textrm{if}\quad  q=0\\
\Ext^{1}(G_n,\mathcal{E}(1))\cong K& \textrm{if}\quad  q= 1\\
0& \textrm{if}\quad  q\geqq 2.
\end{cases}
\]
Hence we have 
$E_2^{p,q}=0$ for all $q\geqq 2$.
By the same argument as 
in the proof of \cite[Proposition 1]{MR3275418},
we also infer that 
\[
E_2^{p,1}=\mathcal{H}^p(\Ext^1(G,\mathcal{E}(1))\lotimes_A G)
=
\begin{cases}
\mathcal{O}(-1)& \textrm{if}\quad  p= -3\\
0& \textrm{if}\quad  p\neq -3.
\end{cases}
\]
Finally 
a right $A$-module $\Hom(G,\mathcal{E}(1))$ has a projective resolution of the following form
\[
0\to P_0^{\oplus f_{1,0}}\to P_0^{\oplus f_{0,0}}\oplus P_1^{\oplus f_{0,1}}\to \Hom (G,\mathcal{E}(1))\to 0,
\]
where $P_0$ and $P_1$ are as in \S~\ref{Preliminaries},
$f_{0,j}=\dim \Hom(G_j, \mathcal{E}(1))$ ($j=0,1$), and $f_{1,0}=4f_{0,1}$.
Hence we see that
\[E_2^{p,0}=
\begin{cases}
\Ker (\mathcal{O}^{\oplus f_{1,0}}\to \mathcal{O}^{\oplus f_{0,0}}\oplus \mathcal{O}(1)^{\oplus f_{0,1}})& \textrm{if}\quad  p= -1\\
\Coker (\mathcal{O}^{\oplus f_{1,0}}\to \mathcal{O}^{\oplus f_{0,0}}\oplus \mathcal{O}(1)^{\oplus f_{0,1}})& \textrm{if}\quad  p= 0\\
0& \textrm{if}\quad p\neq -1, 0.
\end{cases}
\]
Thus we infer that
\[E_{\infty}^{p,q}=
\begin{cases}
E_3^{-3,1}=\Ker (\mathcal{O}(-1)\to E_2^{-1,0})& \textrm{if}\quad  (p,q)= (-3,1)\\
E_3^{-1,0}=\Coker (\mathcal{O}(-1)\to E_2^{-1,0})& \textrm{if}\quad  (p,q)= (-1,0)\\
E_2^{0,0}& \textrm{if}\quad (p,q)=(0,0)\\
0&\textrm{otherwise}.
\end{cases}
\]
The Bondal spectral sequence then shows that
\[E_{\infty}^{p,q}=
\begin{cases}
\mathcal{E}(1)& \textrm{if}\quad  (p,q)= (0,0)\\
0&\textrm{otherwise}.
\end{cases}
\]
Therefore $\mathcal{O}(-1)\cong E_2^{-1,0}$ and we get an exact sequence
\[
0\to \mathcal{O}(-1)\to \mathcal{O}^{\oplus f_{1,0}}\to \mathcal{O}^{\oplus f_{0,0}}\oplus \mathcal{O}(1)^{\oplus f_{0,1}}
\to \mathcal{E}(1)\to 0.\]
Since $\det\mathcal{E}(1)\cong \mathcal{O}(r+2)$, we see that $f_{0,1}=r+3$.
By looking at ranks, we also see that $f_{1,0}-f_{0,0}=4$.
Since $\mathcal{E}(1)$ does not admit $\mathcal{O}$ as a quotient, the sequence above can be replaced by the following one
\[
0\to \mathcal{O}(-1)\to \mathcal{O}^{\oplus 4}\to \mathcal{O}(1)^{\oplus r+3}
\to \mathcal{E}(1)\to 0.\]
This is the case (6) of the theorem.
\end{proof}
\begin{rmk}\label{avoidmisunderstanding1}
Note that $\Omega_{\mathbb{P}^3}(2)$ has the following locally free resolution 
\[0\to \mathcal{O}(-2)\to \mathcal{O}(-1)^{\oplus 4}\to \mathcal{O}^{\oplus 6}\to \Omega_{\mathbb{P}^3}(2)\to 0.\]
Thus if $\mathcal{E}$ on $\mathbb{P}^3$ fits in  the resolution 
$0\to \mathcal{O}\to \Omega_{\mathbb{P}^3}(2)\oplus \mathcal{O}^{\oplus r-2}\to \mathcal{E}\to 0$
given in \cite[Theorem 1 (2)]{pswnef}, then 
$\mathcal{E}$ also fits in  a resolution
\[0\to \mathcal{O}(-2)
\to \mathcal{O}(-1)^{\oplus 4}\oplus \mathcal{O}\to \mathcal{O}^{\oplus r+4}\to \mathcal{E}\to 0.\]
This implies that $\mathcal{E}$ fits in the resolution in the case (6) of Theorem~\ref{d2OnProSpace}.
Similarly, if $\mathcal{E}$ fits in the resolution given in \cite[Theorem 1 (3)]{pswnef}, then
it also fits in the resolution in the case (3) of Theorem~\ref{d2OnProSpace}.
See also \cite[\S 4, Proposition 1 and Remark 2]{MR3275418}.
\end{rmk}
\begin{rmk}
Suppose that $\mathcal{E}$ is in the case (6) of Theorem~\ref{d2OnProSpace}.
Since $H^1(\mathcal{E}(-2))\cong K$, $\mathcal{E}$ cannot split.
If $r\geqq 3$, then $h^0(\mathcal{E}^{\vee})\geqq r-3$ since $h^0(\Omega_{\mathbb{P}^3}(2))=6$.
Hence $\mathcal{E}\cong \mathcal{O}^{\oplus r-3}\oplus \mathcal{E}_0$
for some vector bundle $\mathcal{E}_0$ of rank three.
Note that $\mathcal{E}_0$ is also in the case (6) of Theorem~\ref{d2OnProSpace}.
Let $\mathcal{E}_0$ be a nef vector bundle in the case (6) of Theorem~\ref{d2OnProSpace}
and suppose that $\rk \mathcal{E}_0=3$. Then $c_3(\mathcal{E}_0)=0$,
and we see that $\mathcal{E}_0$ fits in an exact sequence
\[0\to \mathcal{O}\to \mathcal{E}_0\to \mathcal{F}\to 0,\]
where $\mathcal{F}$ is a nef vector bundle in the case (6) of Theorem~\ref{d2OnProSpace}.
Let $Z$ be the zero locus of a general element $s$ in $H^0(\mathcal{F})$.
Then $Z$ is a smooth curve of degree two, and we have an exact sequence
\[
0\to \mathcal{O}(-2)\to \mathcal{F}(-2)\to \mathcal{I}_Z\to 0,
\]
where $\mathcal{I}_Z$ is the ideal sheaf of $Z$ in $\mathbb{P}^3$.
Since $H^1(\mathcal{F}(-2))\cong K$, we have $H^1(\mathcal{I}_Z)\cong K$.
Hence $Z$ cannot be connected. Therefore $Z$ is a disjoint union of two lines.
Since $\Ext^1(\mathcal{I}_Z,\mathcal{O}(-2))\cong K$, we conclude that $\mathcal{F}
\cong \mathcal{N}(1)$, where $\mathcal{N}$ is a null correlation bundle.
Since $\Ext^1(\mathcal{F},\mathcal{O})\cong K$, we see that $\mathcal{E}_0$
is either $\Omega_{\mathbb{P}^3}(2)$ or $\mathcal{O}\oplus \mathcal{N}(1)$.
\end{rmk}

\section{The case where $X$ is a smooth quadric surface}
Let $X$, $G_0,\dots,G_m$, 
$G$,
$A$,
$\mathcal{E}$, $\mathcal{O}_X(1)$,
$d_{\min}$, 
and 
$e_{l,j}$
be as in \S~\ref{Preliminaries}
for $0\leqq j\leqq l\leqq m$.
Assume that $\mathcal{E}$ be a nef vector bundle of rank $r$ as in \S~\ref{easyconstraints}.
In this section, we assume 
that the base field $K$ is of characteristic zero
and that $X$, $G_0,\dots,G_m$, and $\mathcal{O}_X(1)$ are as in the case (3) in \S~\ref{easyconstraints}
with $n=2$.
In particular, $X$ is a smooth quadric surface $\mathbb{Q}^2$,
$m=3$, and $G_0,G_1,G_2,G_3$ are respectively $\mathcal{O},\mathcal{O}(1,0),\mathcal{O}(0,1), \mathcal{O}(1,1)$.
Let $(a,b)$ be as in \S~\ref{easyconstraints}. 

In Theorem~\ref{Chern1surface} below,
we classify
the above $\mathcal{E}$'s
with $\det\mathcal{E}\cong \mathcal{O}(1,1)$.
Note that such $\mathcal{E}$'s 
were already classified in \cite[\S 3]{swCompo}
and \cite[\S 2 Lemmas 1 and 2]{pswnef}
(see also Remark~\ref{errorstatement}).
We give a different proof 
of this result
in our framework.

\begin{thm}\label{Chern1surface}
Suppose that 
$(a,b)=(1,1)$, i.e.,
that $\det\mathcal{E}\cong \mathcal{O}(1,1)=\mathcal{O}(1)$.
Then 
$\mathcal{E}$
satisfies
one of the following:
\begin{enumerate}
\item $\mathcal{E}\cong\mathcal{O}^{\oplus r-1}\oplus\mathcal{O}(1)$.
\item $\mathcal{E}\cong \mathcal{O}^{\oplus r-2}\oplus\mathcal{O}(1,0)\oplus\mathcal{O}(0,1)$.
\item $\mathcal{E}$ fits in an exact sequence
\[0\to\mathcal{O}(-1)\to\mathcal{O}^{\oplus r+1}\to \mathcal{E}\to 0.\]
\end{enumerate}
\end{thm}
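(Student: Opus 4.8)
The plan is to imitate the proofs of Theorems~\ref{d=1onProSpace} and \ref{d2OnProSpace}: run the standard resolution of a suitable twist of $\mathcal{E}$ with respect to $(\mathcal{O},\mathcal{O}(1,0),\mathcal{O}(0,1),\mathcal{O}(1,1))$ and exploit the symmetry of $\mathbb{Q}^{2}=\mathbb{P}^{1}\times\mathbb{P}^{1}$ interchanging the two rulings. If $r=1$ then $\mathcal{E}\cong\mathcal{O}(1,1)$, case (1); so assume $r\geqq 2$. If $\Hom(\mathcal{O}(1,1),\mathcal{E})=\Hom(\det\mathcal{E},\mathcal{E})\neq 0$, then by Lemma~\ref{detsub} and Proposition~\ref{dsubExist} (whose hypotheses hold on $\mathbb{Q}^{2}$, since $H^{1}(\mathcal{O}(1,1))=0$ and every nef bundle with trivial determinant on $\mathbb{Q}^{2}$ is trivial) we get $\mathcal{E}\cong\mathcal{O}^{\oplus r-1}\oplus\mathcal{O}(1,1)$, case (1). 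If $\Hom(\mathcal{O}(1,0),\mathcal{E})\neq 0$ (resp.\ $\Hom(\mathcal{O}(0,1),\mathcal{E})\neq 0$): restricting a nonzero section of $\mathcal{E}(-1,0)$ to a line $L$ of either ruling and using that $\mathcal{E}|_{L}\cong\mathcal{O}_{L}(1)\oplus\mathcal{O}_{L}^{\oplus r-1}$ because $\mathcal{E}$ is nef, one shows as in Lemma~\ref{s0linear} (with Lemma~\ref{quotientTorsionFree}) that such a section is nowhere vanishing; the resulting subbundle $\mathcal{O}(1,0)\hookrightarrow\mathcal{E}$ has quotient $\mathcal{F}$ nef with $\det\mathcal{F}=\mathcal{O}(0,1)$, and since $\mathcal{F}$ is trivial on every line of one ruling it is the pull-back of a nef bundle of degree one on $\mathbb{P}^{1}$, i.e.\ $\mathcal{F}\cong\mathcal{O}(0,1)\oplus\mathcal{O}^{\oplus r-2}$; the extension splits ($\Ext^{1}(\mathcal{F},\mathcal{O}(1,0))=0$), giving case (2). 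Hence from now on $\Hom(\mathcal{O}(1,1),\mathcal{E})=\Hom(\mathcal{O}(1,0),\mathcal{E})=\Hom(\mathcal{O}(0,1),\mathcal{E})=0$.

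By Corollary~\ref{quadricResol} and the argument of Proposition~\ref{firstConstraint}~(2)(e) one has $0\leqq d_{1,\min}\leqq 1$ and $0\leqq d_{2,\min}\leqq 1$. If $(d_{1,\min},d_{2,\min})=(0,0)$ then $e_{0,1}=e_{0,2}=e_{0,3}=0$ by the vanishing above, hence $e_{l,j}=0$ for all $j>0$, so the standard resolution collapses to $\mathcal{E}\cong\mathcal{O}^{\oplus r}$, against $\det\mathcal{E}=\mathcal{O}(1,1)$; so this does not occur, and by the ruling symmetry we may assume $(d_{1,\min},d_{2,\min})$ is $(1,0)$ or $(1,1)$. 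In the case $(1,0)$: the standard resolution of $\mathcal{E}(1,0)$ has $e_{0,3}=h^{0}(\mathcal{E}(0,-1))=0$; since $\Hom(\mathcal{O}(1,0),\mathcal{O}(0,1))=\Hom(\mathcal{O}(0,1),\mathcal{O}(1,0))=0$ all higher $e_{l,j}$ vanish, and $\det\mathcal{E}(1,0)=\mathcal{O}(r+1,1)$ forces $e_{0,1}=r+1$, $e_{0,2}=1$; as $\mathcal{E}(1,0)$ has no $\mathcal{O}$-quotient (because $\mathcal{O}(-1,0)$ is not nef, by Lemma~\ref{nefquotient}), Proposition~\ref{secondconstraint} removes the trivial summands of $\mathcal{P}_{0}$ and leaves
\[
0\to\mathcal{O}^{\oplus 2}\xrightarrow{(\alpha,\beta)}\mathcal{O}(1,0)^{\oplus r+1}\oplus\mathcal{O}(0,1)\to\mathcal{E}(1,0)\to 0 .
\]
One then shows $\beta\colon\mathcal{O}^{\oplus 2}\to\mathcal{O}(0,1)$ is surjective: if $\beta=0$ then $\mathcal{O}(-1,1)$ is a direct summand of $\mathcal{E}$, impossible since it is not nef; if $\mathrm{rk}\,\beta=1$ then $\mathcal{E}(1,0)$, hence $\mathcal{E}$, has a quotient isomorphic to $\mathcal{O}_{L}(-1)$ for a line $L$, so $\mathcal{E}|_{L}\cong\mathcal{O}_{L}(1)\oplus\mathcal{O}_{L}^{\oplus r-1}$ would surject onto $\mathcal{O}_{L}(-1)$, impossible. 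With $\beta$ surjective its kernel is $\mathcal{O}(0,-1)$ (the pull-back of the Euler sequence of one ruling), and eliminating the $\mathcal{O}(0,1)$ summand gives $0\to\mathcal{O}(0,-1)\to\mathcal{O}(1,0)^{\oplus r+1}\to\mathcal{E}(1,0)\to 0$; twisting by $\mathcal{O}(-1,0)$ yields case (3).

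It remains to rule out $(d_{1,\min},d_{2,\min})=(1,1)$, and this is where I expect the real work. Since both indices are $\geqq 1$, vanishing of $\Ext^{q}(G,\mathcal{E}(1,1))$ for $q>0$ already forces $H^{1}(\mathcal{E})=H^{1}(\mathcal{E}(1,0))=H^{1}(\mathcal{E}(0,1))=H^{1}(\mathcal{E}(1,1))=0$; together with $H^{0}(\mathcal{E}(-1,0))=H^{0}(\mathcal{E}(0,-1))=0$, restriction to lines of the two rulings identifies $H^{1}(\mathcal{E}(-1,0))$, $H^{1}(\mathcal{E}(0,-1))$, $H^{1}(\mathcal{E}(1,-1))$ and $H^{1}(\mathcal{E}(-1,1))$ with cokernels of restriction maps on global sections. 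Analysing the standard resolution of $\mathcal{E}(1,1)$ — and using that $\mathcal{E}(1,1)$ has no $\mathcal{O}$-, $\mathcal{O}(1,0)$- or $\mathcal{O}(0,1)$-quotient, again by Lemma~\ref{nefquotient} — I would show that at least one of the two pairs of cokernels vanishes, i.e.\ $\Ext^{q}(G,\mathcal{E}(1,0))=0$ or $\Ext^{q}(G,\mathcal{E}(0,1))=0$ for all $q>0$, contradicting the minimality in the definition of $(d_{1,\min},d_{2,\min})$. Hence $(1,1)$ cannot occur, and cases (1)--(3) exhaust the possibilities. The main obstacle is exactly this exclusion of $d_{1,\min}=d_{2,\min}=1$ (equivalently, controlling the standard resolution of $\mathcal{E}(1,1)$ tightly enough); the remaining points, including the exclusion of the degenerate connecting maps $\beta$ above, are handled uniformly by restricting to the lines of the two rulings and invoking that a nef bundle on $\mathbb{P}^{1}$ has no quotient line bundle of negative degree.
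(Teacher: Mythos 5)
Your opening reductions are fine and essentially follow the paper: case (1) via Proposition~\ref{dsubExist}, case (2) via the subbundle argument for a nonzero map $\mathcal{O}(1,0)\to\mathcal{E}$ or $\mathcal{O}(0,1)\to\mathcal{E}$ (note you do need $\Hom(\mathcal{O}(1,1),\mathcal{E})=0$ at that point to guarantee the map is nonzero on every line of the relevant ruling, which your sequential case analysis implicitly provides), and the exclusion of $(d_{1,\min},d_{2,\min})=(0,0)$. Your treatment of the case $(1,0)$, including the elimination of degenerate $\beta$ by restriction to lines, is also correct and does produce case (3). The problem is the step you yourself flag as ``the real work'': the exclusion of $(d_{1,\min},d_{2,\min})=(1,1)$ is only announced, not proved, and the ingredients you propose to use there (that $\mathcal{E}(1,1)$ admits no quotient $\mathcal{O}$, $\mathcal{O}(1,0)$ or $\mathcal{O}(0,1)$) cannot by themselves yield the required vanishing $\Ext^{>0}(G,\mathcal{E}(1,0))=0$ or $\Ext^{>0}(G,\mathcal{E}(0,1))=0$. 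What is missing is a numerical input, and it is exactly the heart of the paper's proof: Riemann--Roch on $\mathbb{Q}^2$ gives $\chi(\mathcal{E})=r+3-c_2(\mathcal{E})$, nefness gives $0\leqq H(\mathcal{E})^{r+1}=c_1(\mathcal{E})^2-c_2(\mathcal{E})$, hence $c_2(\mathcal{E})\leqq 2$; the vanishing $\Ext^{>0}(G,\mathcal{E}(1,1))=0$ already forces $H^{>0}(\mathcal{E})=0$, so $h^0(\mathcal{E})=\chi(\mathcal{E})\geqq r+1$; comparing with $h^0(\mathcal{E}|_L)=r+1$ through $0\to\mathcal{E}(-1,0)\to\mathcal{E}\to\mathcal{E}|_L\to 0$ and $H^0(\mathcal{E}(-1,0))=0$ then gives $h^0(\mathcal{E})=r+1$ and $H^1(\mathcal{E}(-1,0))=0$, and symmetrically $H^1(\mathcal{E}(0,-1))=0$; one more restriction sequence kills $H^{>0}(\mathcal{E}(1,-1))$ (resp.\ $H^{>0}(\mathcal{E}(-1,1))$), whence $\Ext^{>0}(G,\mathcal{E}(1,0))=0$, contradicting the minimality built into your case $(1,1)$. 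Without this count (or an equivalent), your argument does not close, so there is a genuine gap precisely at the decisive case.

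For comparison, the paper avoids the bigraded case division altogether: once all three Hom groups vanish it shows the diagonal $d_{\min}$ equals $1$, computes $\RHom(G_j,\mathcal{E})$ by the count above (getting $0$ for $G_1,G_2$ and $K[-1]$ for $G_3$), and applies the Bondal spectral sequence to $\mathcal{E}$ itself to obtain $0\to\mathcal{O}(-1)\to\mathcal{O}^{\oplus r+1}\to\mathcal{E}\to 0$ directly. Your route through $(d_{1,\min},d_{2,\min})$ and the standard resolution of $\mathcal{E}(1,0)$ would also work once the cohomological vanishing above is established, but as written the proposal defers exactly that point.
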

\begin{proof}
If $r=1$, then $\mathcal{E}\cong \mathcal{O}(1)$ and $d_{\min}=-1$. 
We assume that $r\geqq 2$ in the following.
Then $0\leqq d_{\min}\leqq 1$
by Proposition~\ref{firstConstraint} (2) (e)
and Corollary ~\ref{quadricResol}.

Since $\mathcal{E}$ is nef and $\det\mathcal{E}\cong\mathcal{O}(1)$,
we see that 
$\mathcal{E}|_L\cong \mathcal{O}_L^{\oplus r-1}\oplus \mathcal{O}_L(1)$
for any line $L$ in $\mathbb{Q}^2$. 

If $\Hom(\mathcal{O}(1),\mathcal{E})\neq 0$,
then it follows from Proposition~\ref{dsubExist} that 
$\mathcal{E}\cong \mathcal{O}^{\oplus r-1}\oplus\mathcal{O}(1)$.
In the following we assume that $\Hom(\mathcal{O}(1),\mathcal{E})= 0$.

Suppose that $\Hom(\mathcal{O}(0,1),\mathcal{E})\neq 0$.
Let $\varphi$ be a non-zero element of $\Hom(\mathcal{O}(0,1), \mathcal{E})$.
Since $\Hom(\mathcal{O}(1),\mathcal{E})= 0$,
$\varphi|_L\neq 0$ for any line $L$ of type $(1,0)$ in $\mathbb{Q}^2$.
Hence $\varphi|_L$ makes $\mathcal{O}(0,1)|_L$ a subbundle of $\mathcal{E}|_L$.
Therefore $\mathcal{O}(0,1)$ is a subbundle of $\mathcal{E}$ via $\varphi$.
Set $\mathcal{F}=\mathcal{E}/\mathcal{O}(0,1)$.
Then $\mathcal{F}$ is a nef vector bundle of rank $r-1$ with $\det\mathcal{F}\cong\mathcal{O}(1,0)$,
and thus $\mathcal{F}$ is isomorphic to $\mathcal{O}^{\oplus r-2}\oplus \mathcal{O}(1,0)$.
Therefore $\mathcal{E}\cong \mathcal{O}^{\oplus r-2}\oplus \mathcal{O}(1,0)\oplus\mathcal{O}(0,1)$.
Similarly if $\Hom(\mathcal{O}(1,0),\mathcal{E})\neq 0$ 
then $\mathcal{E}\cong \mathcal{O}^{\oplus r-2}\oplus \mathcal{O}(1,0)\oplus\mathcal{O}(0,1)$.

In the following, we assume that $\Hom(\mathcal{O}(0,1),\mathcal{E})= 0$ and that $\Hom(\mathcal{O}(1,0),\mathcal{E})= 0$.
Under these assumptions, we have $d_{\min}=1$. Indeed, if $d_{\min}$ were zero,
then $e_{0,3}=0$ by the assumption that $\Hom(\mathcal{O}(1),\mathcal{E})=0$,
and similarly $e_{0,2}=0$ and $e_{0,1}=0$ by the assumptions above.
The standard resolution then forces $\mathcal{E}$ to be isomorphic to $\mathcal{O}^{\oplus r}$,
which contradicts the assumption that $(a,b)=(1,1)$.
Therefore $d_{\min}=1$.

We shall apply to $\mathcal{E}$ the Bondal spectral sequence \cite[Theorem 1]{MR3275418}
\[
E_2^{p,q}=\caltor_{-p}^A(\Ext^q(G,\mathcal{E}),G)
\Rightarrow
E^{p+q}=
\begin{cases}
\mathcal{E}& \textrm{if}\quad  p+q= 0\\
0& \textrm{if}\quad  p+q\neq 0.
\end{cases}
\]
First note that $\Hom(G,\mathcal{E})\cong \Hom(\mathcal{O},\mathcal{E})\cong H^0(\mathcal{E})$.
The Riemann-Roch formula for a vector bundle $\mathcal{E}$ of rank $r$ on $\mathbb{Q}^2$
is
\[\chi(\mathcal{E})=c_1'(\mathcal{E})c_1''(\mathcal{E})-c_2(\mathcal{E})+c_1'(\mathcal{E})+c_1''(\mathcal{E})+r,\]
where $c_1(\mathcal{E})=(c_1'(\mathcal{E}), c_1''(\mathcal{E}))$.
Since $(c_1'(\mathcal{E}), c_1''(\mathcal{E}))=(1,1)$ by assumption, the above formula implies 
that $h^0(\mathcal{E})=\chi(\mathcal{E})=r+3-c_2(\mathcal{E})$.
Note here that $0\leqq H(\mathcal{E})^{r+1}=c_1(\mathcal{E})^2-c_2(\mathcal{E})$.
Hence we have $c_2(\mathcal{E})\leqq 2$, 
and consequently $h^0(\mathcal{E})\geqq r+1$.
We have an exact sequence
\[
0\to \mathcal{E}(-1,0)\to \mathcal{E}\to \mathcal{E}|_L\to 0,
\]
where $L$ is a line on $\mathbb{Q}^2$ of type $(1,0)$.
Thus we have an exact sequence
\[0\to H^0(\mathcal{E})\to H^0(\mathcal{E}|_L)\to H^1(\mathcal{E}(-1,0))\to 0
\]
by our assumption. Since $h^0(\mathcal{E}|_L)=r+1$, we infer that $h^0(\mathcal{E})=r+1$
and that $h^1(\mathcal{E}(-1,0))=0$.
Moreover we see that $\RHom(\mathcal{O}(1,0),\mathcal{E})\cong 0$, and similarly
we have $\RHom(\mathcal{O}(0,1),\mathcal{E})\cong 0$.
We have an exact sequence
\[
0\to \mathcal{E}(-1,-1)\to \mathcal{E}(0,-1)\to \mathcal{O}_L\oplus \mathcal{O}_L(-1)^{\oplus r-1}\to 0,
\]
and we see that $\RHom(\mathcal{O}(1,1),\mathcal{E})\cong K[-1]$.

Summing up, we have
\[\Ext^q(G,\mathcal{E})=
\begin{cases}
\Hom(\mathcal{O},\mathcal{E})& \textrm{if}\quad  q=0\\
\Ext^{1}(G_3,\mathcal{E})\cong K& \textrm{if}\quad  q= 1\\
0& \textrm{if}\quad  q= 2.
\end{cases}
\]
Hence we have 
$E_2^{p,q}=0$ for all $q\geqq 2$.
Let $S_k$ $(0\leqq k\leqq 3)$ be the right $A$-module corresponding to the representation such that 
$\Gr^jS_k=0$ for any $j\neq k$, $\Gr^kS_k=K$, and all the arrows are zero.
Then the right $A$-module $\Ext^1(G,\mathcal{E})$ is isomorphic to $S_3$.
Note here that 
\[S_3\lotimes_AG\cong \mathcal{O}(-1)[2],\]
since $\RHom(G,\mathcal{O}(-1)[2])\cong \Ext^2(G_3,\mathcal{O}(-1))\cong S_3$.
Hence we infer that 
\[\Ext^1(G,\mathcal{E})\lotimes_A G\cong \mathcal{O}(-1)[2].\] 
Therefore we see that 
\[
E_2^{p,1}=\mathcal{H}^p(\Ext^1(G,\mathcal{E})\lotimes_A G)
=
\begin{cases}
\mathcal{O}(-1)& \textrm{if}\quad  p= -2\\
0& \textrm{if}\quad  p\neq -2.
\end{cases}
\]
Finally 
a right $A$-module $\Hom(G,\mathcal{E})$ 
is isomorphic to a projective module $P_0^{\oplus f_{0,0}}$
where $P_0$ is as in \S~\ref{Preliminaries}
and $f_{0,0}=\dim \Hom(G_0, \mathcal{E}(1))$.
Hence we see that
\[E_2^{p,0}=
\begin{cases}
\mathcal{O}^{\oplus f_{0,0}}& \textrm{if}\quad  p= 0\\
0& \textrm{if}\quad p\neq 0.
\end{cases}
\]
Thus we infer that
\[E_{\infty}^{p,q}=
\begin{cases}
E_3^{-2,1}=\Ker (\mathcal{O}(-1)\to \mathcal{O}^{\oplus f_{0,0}})& \textrm{if}\quad  (p,q)= (-2,1)\\
E_3^{0,0}=\Coker (\mathcal{O}(-1)\to \mathcal{O}^{\oplus f_{0,0}})& \textrm{if}\quad  (p,q)= (0,0)\\
0&\textrm{otherwise}.
\end{cases}
\]
The Bondal spectral sequence then shows that
\[E_{\infty}^{p,q}=
\begin{cases}
\mathcal{E}& \textrm{if}\quad  (p,q)= (0,0)\\
0&\textrm{otherwise}.
\end{cases}
\]
Therefore we get an exact sequence
\[
0\to \mathcal{O}(-1)\to \mathcal{O}^{\oplus f_{0,0}}
\to \mathcal{E}\to 0.\]
By looking at ranks, we see that $f_{0,0}=r+1$,
and we get the case (3) of the theorem.
\end{proof}
\begin{rmk}\label{errorstatement}
In the statement of \cite[\S 2 Lemma 1]{pswnef} in case $(a,b)=(1,1)$, the case (3),
where $d_{\min}=1$, in Theorem~\ref{Chern1surface} is missing,
and, instead, ``the restriction of a spinor bundles from $\mathbb{Q}^3$" is added.
Since the restriction of a spinor bundles from $\mathbb{Q}^3$ is $\mathcal{O}(1,0)\oplus \mathcal{O}(0,1)$,
where $d_{\min}=0$,
this is an error. However one can understand that 
the case (3) in Theorem~\ref{Chern1surface}
would be 
what they actually wanted to say by the terms ``the restriction of a spinor bundles from $\mathbb{Q}^3$''
if one read through \cite[\S 3]{swCompo}.
\end{rmk}

\section{Results on spinor bundles}\label{Spinor bundles}
In this section, we assume that the base field $K$ is of characteristic zero,
and recall some results on spinor bundles.
Although we do not follow his convention for ``spinor bundles'', Ottaviani's results in ~\cite{ot}
is very useful in this paper. We rephrase his results under Kapranov's convention
for later use. 
Throughout this section,
let $\mathcal{S}$ denote the (spanned) spinor bundle on 
an odd-dimensional 
smooth hyperquadric
$\mathbb{Q}^{n}$,
and $\mathcal{S}^+$ and $\mathcal{S}^-$  the (spanned) spinor bundles on 
an even-dimensional 
smooth hyperquadric
$\mathbb{Q}^{n}$.
Besides that the sequences $(G_0,\dots, G_m)$ in the cases (2) and (3) of \S~\ref{easyconstraints}
are strong and exceptional,
all 
the results we need
about spinor bundles
are summarized in the following 
theorem.
\begin{thm}\label{Usefulottaviani}
Set $s=\lfloor \frac{n-1}{2}\rfloor$
and let $H$ be a smooth hyperplane section of $\mathbb{Q}^n$.
Then we have the following.
\begin{enumerate}
\item[(0)] $\mathcal{O}(1,0)$ and $\mathcal{O}(0,1)$
are (spanned) spinor bundles on $\mathbb{Q}^2$.
\item $\mathcal{S}^+|_{H}\cong \mathcal{S}$ and $\mathcal{S}^-|_{H}\cong \mathcal{S}$.
\item $H^0(\mathbb{Q}^n, \mathcal{S}^+)\cong H^0(H,\mathcal{S})$
and $H^0(\mathbb{Q}^n, \mathcal{S}^-)\cong H^0(H,\mathcal{S})$.
\item $\mathcal{S}|_{H}\cong \mathcal{S}^+\oplus\mathcal{S}^-$.
\item $H^0(\mathbb{Q}^n, \mathcal{S})\cong H^0(H,\mathcal{S}^+\oplus\mathcal{S}^-)$.
\item $\rk \mathcal{S}=2^s$, $\dim H^0(\mathcal{S})=2^{s+1}$, and $\det\mathcal{S}=\mathcal{O}(2^{s-1})$.
\item 
$\rk \mathcal{S}^{+}=2^s=\rk \mathcal{S}^{-}$ and $\dim H^0(\mathcal{S}^{+})=2^{s+1}=\dim H^0(\mathcal{S}^{-})$.
\item $\det\mathcal{S}^{+}=\mathcal{O}(2^{s-1})=\det\mathcal{S}^{-}$ if $s\geqq 1$.
\item $\mathcal{S}$, $\mathcal{S}^+$, and  $\mathcal{S}^-$ are all $\mu$-stable bundles (with respect to 
any ample line bundle).
\item $\mathcal{S}^{\vee}\cong \mathcal{S}(-1)$.
\item $(\mathcal{S}^+)^{\vee}\cong \mathcal{S}^+(-1)$ 
and $(\mathcal{S}^-)^{\vee}\cong \mathcal{S}^-(-1)$ 
and if $s$ is 
odd.
\item $(\mathcal{S}^+)^{\vee}\cong \mathcal{S}^-(-1)$ 
and $(\mathcal{S}^-)^{\vee}\cong \mathcal{S}^+(-1)$ 
and if $s$ is 
even.
\end{enumerate}
\end{thm}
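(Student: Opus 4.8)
The plan is to obtain every item by transporting Ottaviani's results from \cite{ot} across the convention change recorded in \cite[\S 5]{MR3275418}. Write $S$ (respectively $S^{+},S^{-}$) for Ottaviani's spinor bundle on an odd-dimensional (respectively even-dimensional) smooth quadric; by definition our spinor bundles are the duals, $\mathcal{S}=S^{\vee}$ and $\{\mathcal{S}^{+},\mathcal{S}^{-}\}=\{(S^{+})^{\vee},(S^{-})^{\vee}\}$, with the $+/-$ labelling fixed once and for all. Since $\mathcal{F}\mapsto\mathcal{F}^{\vee}$ is exact on locally free sheaves, preserves rank, inverts $\det$, and preserves $\mu$-stability, each assertion of \cite{ot} passes to ours, and the only genuine task is to keep track of the normalizing twist and the $\pm$ labels.

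First I would dispose of $\mathbb{Q}^{2}$ directly: here $s=0$, Ottaviani's spinor bundles are the rank-one bundles $\mathcal{O}(-1,0)$ and $\mathcal{O}(0,-1)$, whose duals $\mathcal{O}(1,0)$ and $\mathcal{O}(0,1)$ are spanned; this gives (0), and the $s=0$ cases of (5), (6) are immediate while (7) is vacuous. For $n\geqq 3$ I would next quote from \cite{ot} the hyperplane-restriction formulas $S|_{H}\cong S^{+}\oplus S^{-}$ when $n$ is odd and $S^{\pm}|_{H}\cong S$ when $n$ is even; dualizing yields (3) and (1). From the same source I would read off the rank $2^{s}$, the section number $2^{s+1}$, and the first Chern class $-2^{s-1}$ of Ottaviani's bundles for $n\geqq 3$, which after dualizing give the rank, determinant and $H^{0}$-dimension statements in (5), (6) and the determinant statement (7); alternatively, all of these follow by induction on $n$ from the $\mathbb{Q}^{2}$-case using (1), (3) and the injectivity of $\Pic\mathbb{Q}^{n}\to\Pic\mathbb{Q}^{n-1}$ for $n\geqq 4$, the arithmetic working out because $2^{s}$ and $2^{s-1}$ double precisely when the dimension passes from even to odd (where the rank doubles) and are unchanged otherwise. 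Items (2), (4) then follow at once: both sides have the same dimension by (5), (6), and the restriction map $H^{0}(\mathbb{Q}^{n},\mathcal{S})\to H^{0}(H,\mathcal{S}|_{H})$ (and its analogue for $\mathcal{S}^{\pm}$) is injective because its kernel is $H^{0}(\mathbb{Q}^{n},\mathcal{S}(-1))=0$, the vanishing holding since, by (8)--(11), $\mathcal{S}(-1)$ is the dual of a $\mu$-stable bundle of positive slope and so has no nonzero sections.

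The substantive input to be imported from \cite{ot} is therefore (8)--(11): the $\mu$-stability of $S,S^{+},S^{-}$ with respect to any ample line bundle (which dualizes to (8)), the self-duality $S^{\vee}\cong S(1)$ for $n$ odd (which dualizes to (9)), and, for $n$ even, the isomorphisms $(S^{+})^{\vee}\cong S^{+}(1)$, $(S^{-})^{\vee}\cong S^{-}(1)$ when $n\equiv 0\pmod{4}$ and $(S^{+})^{\vee}\cong S^{-}(1)$ when $n\equiv 2\pmod{4}$. Since $s=n/2-1$ for $n$ even, the congruence $n\equiv 0$ (respectively $2$) modulo $4$ is exactly the condition that $s$ be odd (respectively even), so dualizing these isomorphisms gives precisely (10) and (11).

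The main obstacle is precisely this last step of bookkeeping rather than anything conceptual. One must verify that our fixed labelling of $\mathcal{S}^{+}$ and $\mathcal{S}^{-}$ is the dual of Ottaviani's labelling, so that no hidden interchange of $+$ and $-$ occurs which would swap (10) with (11), and one must confirm that the two residues modulo $4$ match $s$ odd and $s$ even in the right way. I would settle this by comparing, on $\mathbb{Q}^{4}$ and $\mathbb{Q}^{6}$, the global sections and first Chern classes of $\mathcal{S}^{\pm}(-1)$ against Ottaviani's explicit normalization, and by using that the two spinor bundles on an even-dimensional quadric are interchanged by the outer involution of the quadric, which is the symmetry responsible for the $\pm$ in \cite{ot}. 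Everything else is routine.
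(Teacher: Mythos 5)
Your proposal is correct and follows the same overall strategy as the paper: every item is transported from Ottaviani's results by dualization, with the $\mathbb{Q}^2$ case and the restriction formulas (1), (3) handled exactly as in the paper's proof. Two local choices differ. For items (2) and (4) the paper restricts along $0\to \mathcal{S}(-1)\to\mathcal{S}\to\mathcal{S}|_H\to 0$ and quotes Ottaviani's cohomology computation (or Bott vanishing) to kill both $H^0$ and $H^1$ of the twisted spinor bundle, and only afterwards obtains the numerical statements (5)--(7) by induction from (0)--(4); you proceed in the opposite order, reading rank, determinant and $h^0$ off \cite{ot} first and then deducing (2), (4) from equality of dimensions plus injectivity of restriction, the kernel $H^0(\mathcal{S}(-1))$ vanishing because by (9)--(11) it is the dual of a $\mu$-stable bundle of positive slope. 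Both routes are sound; the one caveat is that your parenthetical ``alternatively by induction'' for the $h^0$-parts of (5)--(6) would be circular in your ordering, since that induction needs (2) and (4), so you must keep the direct citation of Ottaviani's cohomology there (for rank and determinant the induction is fine). For (8) you import stability from \cite{ot}, whereas the paper invokes the theorem of Ramanan \cite{MR0190947} and Umemura \cite{MR0473243} on homogeneous bundles; either reference suffices. Finally, the $\pm$ bookkeeping you flag for (10)--(11) is harmless: both statements are symmetric under interchanging $\mathcal{S}^+$ and $\mathcal{S}^-$, so beyond matching $n\bmod 4$ with the parity of $s$ (which you do correctly, as the paper does via $n=2(s+1)$) no further normalization check is needed.
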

\begin{proof}
Note that our spinor bundles are the duals of those of Ottaviani's.
The statement of (0) is,
e.g., in \cite[Example 1.5]{ot},
and already used in this paper.
(1) and (3) follow from \cite[Theorem 1.4]{ot}.
(2) and (4) follow from (1), (3), and \cite[Theorem 2.3]{ot} (or Bott's vanishing theorem).
(5) and (6) follow from (0), (1), (2), (3), and (4).
(7) follows from (0), (1), and (3).
A theorem of Ramanan 
\cite{MR0190947} 
and Umemura \cite[Theorem (2.4)]{MR0473243}
shows (8).
Finally (9), (10), and (11) follow from \cite[Theorem 2.8]{ot},
since $n=2(s+1)$ if $n$ is even.
\end{proof}

\begin{lemma}\label{S(1)toS}
We have the following isomorphisms.
\begin{enumerate}
\item[(1)] If $n$ is odd, then $\RHom(\mathcal{S}(1),\mathcal{S})\cong K[-1]$.
\item[(2)] If $n$ is even, then 
\begin{equation*}
\begin{split}
\RHom(\mathcal{S}^+(1),\mathcal{S}^+)\cong 0,\qquad &\RHom(\mathcal{S}^-(1),\mathcal{S}^-)\cong 0,\\
\RHom(\mathcal{S}^+(1),\mathcal{S}^-)\cong K[-1],\qquad &\RHom(\mathcal{S}^-(1),\mathcal{S}^+)\cong K[-1].
\end{split}
\end{equation*}
In particular, the following isomorphisms hold.
\[\RHom((\mathcal{S}^+\oplus\mathcal{S}^-)(1),\mathcal{S}^+)\cong K[-1],\textrm{\quad }
\RHom((\mathcal{S}^+\oplus\mathcal{S}^-)(1),\mathcal{S}^-)\cong K[-1]\]
\end{enumerate}
\end{lemma}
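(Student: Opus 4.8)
The plan is to derive everything from two ingredients: the defining short exact sequences of the spinor bundles, and the exceptionality of the collections of \S~\ref{easyconstraints}. The first reduction is the obvious isomorphism
\[\RHom(\mathcal{S}^{\epsilon}(1),\mathcal{S}^{\epsilon'})\cong\RHom(\mathcal{S}^{\epsilon},\mathcal{S}^{\epsilon'}(-1))\]
(and likewise with $\mathcal{S}$ in place of $\mathcal{S}^{\pm}$), obtained by tensoring $\calhom(\mathcal{S}^{\epsilon},\mathcal{S}^{\epsilon'}(-1))$ by $\mathcal{O}(1)$; so it suffices to compute $\RHom(\mathcal{S},\mathcal{S}(-1))$ and the four complexes $\RHom(\mathcal{S}^{\epsilon},\mathcal{S}^{\epsilon'}(-1))$.

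I would then recall the spinor sequences. In Kapranov's normalization, i.e.\ after dualizing (and twisting) the sequences of \cite{ot} and using that our $\mathcal{S}$, $\mathcal{S}^{\pm}$ are the duals of Ottaviani's, these read
\begin{gather*}
0\to\mathcal{S}(-1)\to\mathcal{O}^{\oplus 2^{s+1}}\to\mathcal{S}\to 0\quad(n\ \text{odd}),\\
0\to\mathcal{S}^{-}(-1)\to\mathcal{O}^{\oplus 2^{s+1}}\to\mathcal{S}^{+}\to 0,\\
0\to\mathcal{S}^{+}(-1)\to\mathcal{O}^{\oplus 2^{s+1}}\to\mathcal{S}^{-}\to 0\quad(n\ \text{even}).
\end{gather*}
The one point requiring care here is that in the even case it is the \emph{opposite} spinor bundle that occurs in the kernel; this is visible already on $\mathbb{Q}^2$, where the two sequences are the pullbacks of the Euler sequences from the two rulings, $0\to\mathcal{O}(-1,0)\to\mathcal{O}^{2}\to\mathcal{O}(1,0)\to 0$ and $0\to\mathcal{O}(0,-1)\to\mathcal{O}^{2}\to\mathcal{O}(0,1)\to 0$. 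From exceptionality of $(\mathcal{O},\mathcal{S},\mathcal{O}(1),\dots)$ and $(\mathcal{O},\mathcal{S}^{+},\mathcal{S}^{-},\mathcal{O}(1),\dots)$ I will use freely that $\RHom(\mathcal{S},\mathcal{O})=0$ and $\RHom(\mathcal{S},\mathcal{S})\cong K$; that $\RHom(\mathcal{S}^{\pm},\mathcal{O})=0$, $\RHom(\mathcal{S}^{+},\mathcal{S}^{+})\cong K\cong\RHom(\mathcal{S}^{-},\mathcal{S}^{-})$, and $\RHom(\mathcal{S}^{-},\mathcal{S}^{+})=0$; and, invoking Theorem~\ref{Usefulottaviani} (10)--(11) to identify $\mathcal{S}^{\pm}(-1)$ with the dual of a spinor bundle, that $\RHom(\mathcal{O},\mathcal{S}^{\pm}(-1))\cong\RHom(\mathcal{S}',\mathcal{O})=0$ for the relevant $\mathcal{S}'\in\{\mathcal{S}^{+},\mathcal{S}^{-}\}$.

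For (1), applying the triangulated functor $\RHom(\mathcal{S},-)$ to the odd sequence yields a triangle whose middle term $\RHom(\mathcal{S},\mathcal{O}^{\oplus 2^{s+1}})$ vanishes and whose last term is $\RHom(\mathcal{S},\mathcal{S})\cong K$, forcing $\RHom(\mathcal{S},\mathcal{S}(-1))\cong K[-1]$. For the two ``$K[-1]$'' assertions of (2), applying $\RHom(\mathcal{S}^{+},-)$ to $0\to\mathcal{S}^{-}(-1)\to\mathcal{O}^{\oplus 2^{s+1}}\to\mathcal{S}^{+}\to 0$ gives in the same way $\RHom(\mathcal{S}^{+},\mathcal{S}^{-}(-1))\cong K[-1]$, and symmetrically $\RHom(\mathcal{S}^{-},\mathcal{S}^{+}(-1))\cong K[-1]$. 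For the two vanishing assertions I would instead apply the contravariant functor $\RHom(-,\mathcal{S}^{+}(-1))$ to $0\to\mathcal{S}^{-}(-1)\to\mathcal{O}^{\oplus 2^{s+1}}\to\mathcal{S}^{+}\to 0$: the resulting triangle has $\RHom(\mathcal{O}^{\oplus 2^{s+1}},\mathcal{S}^{+}(-1))=\RHom(\mathcal{O},\mathcal{S}^{+}(-1))^{\oplus 2^{s+1}}=0$ and $\RHom(\mathcal{S}^{-}(-1),\mathcal{S}^{+}(-1))\cong\RHom(\mathcal{S}^{-},\mathcal{S}^{+})=0$, hence $\RHom(\mathcal{S}^{+},\mathcal{S}^{+}(-1))=0$; symmetrically $\RHom(\mathcal{S}^{-},\mathcal{S}^{-}(-1))=0$. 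The ``in particular'' statements then follow from additivity of $\RHom$ in the first variable.

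The homological algebra above is entirely routine; the only genuinely delicate point is importing the spinor sequences correctly into Kapranov's convention — especially getting the opposite spinor bundle into the kernel in the even case — since the other choice would interchange the vanishing and the $K[-1]$ conclusions. I would cross-check this both against the $\mathbb{Q}^2$ model recorded above and against internal consistency (Theorem~\ref{Usefulottaviani} (10)--(11) forces $\RHom(\mathcal{O},\mathcal{S}^{\pm}(-1))=0$, which is exactly what makes the vanishing assertions come out).
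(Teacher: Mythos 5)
Your proof is correct, but it runs along a different track than the paper's. The paper never touches the evaluation sequences: it restricts to a smooth hyperplane section $H$ and applies $\RHom(-,\mathcal{S})$ (resp.\ $\RHom(-,\mathcal{S}^{\pm})$) to $0\to\mathcal{F}(-1)\to\mathcal{F}\to\mathcal{F}|_H\to 0$, using the restriction isomorphisms $\mathcal{S}|_H\cong\mathcal{S}^{+}\oplus\mathcal{S}^{-}$ and $\mathcal{S}^{\pm}|_H\cong\mathcal{S}$ of Theorem~\ref{Usefulottaviani}, together with exceptionality of the collections and, in the odd case, $\mu$-stability to get $\Hom(\mathcal{S}(1),\mathcal{S})=0$ and hence pin down the cone in the triangle $\RHom(\mathcal{S}(1),\mathcal{S})\to K\to K\oplus K\to$. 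You instead import the dualized Ottaviani sequences $0\to\mathcal{S}(-1)\to\mathcal{O}^{\oplus 2^{s+1}}\to\mathcal{S}\to 0$ and $0\to\mathcal{S}^{\mp}(-1)\to\mathcal{O}^{\oplus 2^{s+1}}\to\mathcal{S}^{\pm}\to 0$ and then use only the (semi)orthogonality relations; this is a clean and equally rigorous route, and you correctly identified and verified the one delicate input, namely that in the even case the \emph{opposite} spinor bundle sits in the kernel (your $\mathbb{Q}^2$ and duality cross-checks are consistent with the paper's independently proved answer, which would be contradicted by the other choice). The trade-off: your argument needs an extra piece of [ot] (the tautological/Clifford sequences) not recorded in Theorem~\ref{Usefulottaviani}, while the paper's needs only facts already listed there plus stability; conversely you avoid any appeal to stability in the odd case. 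One small point to make explicit: for the second vanishing, $\RHom(\mathcal{S}^{-}(1),\mathcal{S}^{-})\cong 0$, your ``symmetric'' step requires $\RHom(\mathcal{S}^{+},\mathcal{S}^{-})=0$, and in particular $\Hom(\mathcal{S}^{+},\mathcal{S}^{-})=0$, which is not among the facts you listed (exceptionality of the ordered collection only gives $\RHom(\mathcal{S}^{-},\mathcal{S}^{+})=0$ and $\Ext^{>0}(\mathcal{S}^{+},\mathcal{S}^{-})=0$); this extra vanishing is true, is used explicitly in the paper (proof of Proposition~\ref{firstConstraint}~(2)(c)), and also follows from $\mu$-stability of the two non-isomorphic spinor bundles of equal slope, so it is an omission of a citation rather than a gap, but you should state it.
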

\begin{proof}
(1) Suppose that $n$ is odd.
Then 
$\mathcal{S}|_{H}\cong \mathcal{S}^+\oplus \mathcal{S}^-$
for a smooth hyperplane section $H$ of $\mathbb{Q}^n$
by Theorem~\ref{Usefulottaviani}.
We have the following distinguished triangle
\[
\RHom(\mathcal{S}(1),\mathcal{S})\to 
\RHom(\mathcal{S},\mathcal{S})\to
\RHom(\mathcal{S}^+\oplus \mathcal{S}^-, \mathcal{S}^+\oplus \mathcal{S}^-)
\to. 
\]
Since $\RHom(\mathcal{S},\mathcal{S})\cong K$
and $\RHom(\mathcal{S}^+\oplus \mathcal{S}^-, \mathcal{S}^+\oplus \mathcal{S}^-)\cong K\oplus K$,
we get a distinguished triangle
\[
\RHom(\mathcal{S}(1),\mathcal{S})\to 
K\to
K\oplus K
\to. 
\]
Since $\mathcal{S}$ is $\mu$-stable by Theorem~\ref{Usefulottaviani}, 
we have $\Hom(\mathcal{S}(1),\mathcal{S})=0$.
Therefore we conclude that $\RHom(\mathcal{S}(1),\mathcal{S})\cong K[-1]$.

(2) Suppose that $n$ is even.
Then $\mathcal{S}^+|_{H}\cong \mathcal{S}$
and $\mathcal{S}^-|_{H}\cong \mathcal{S}$
for a smooth hyperplane section $H$ of $\mathbb{Q}^n$
by Theorem~\ref{Usefulottaviani}.
We have the following distinguished triangle
\[
\RHom(\mathcal{S}^+(1),\mathcal{S}^+)\to 
\RHom(\mathcal{S}^+,\mathcal{S}^+)\to
\RHom(\mathcal{S}, \mathcal{S})
\to. 
\]
Since $K\cong \RHom(\mathcal{S}^+,\mathcal{S}^+)\to \RHom(\mathcal{S}, \mathcal{S})\cong K$
is isomorphic, we see that 
\[\RHom(\mathcal{S}^+(1),\mathcal{S}^+)\cong 0.\]
By the similar argument, we get $\RHom(\mathcal{S}^-(1),\mathcal{S}^-)\cong 0$.
We have the following distinguished triangle
\[
\RHom(\mathcal{S}^+(1),\mathcal{S}^-)\to 
\RHom(\mathcal{S}^+,\mathcal{S}^-)\to
\RHom(\mathcal{S}, \mathcal{S})
\to. 
\]
Since $\RHom(\mathcal{S}^+,\mathcal{S}^-)\cong 0$, we see that $\RHom(\mathcal{S}^+(1),\mathcal{S}^-)\cong K[-1]$.
By the similar argument, we get $\RHom(\mathcal{S}^-(1),\mathcal{S}^+)\cong K[-1]$
since $\RHom(\mathcal{S}^-,\mathcal{S}^+)\cong 0$.
\end{proof}

\section{The case where $X$ is a smooth hyperquadric}
Let $X$, $G_0,\dots,G_m$, 
$\mathcal{E}$, $\mathcal{O}_X(1)$,
$d_{\min}$, $e_{l,j}$, 
and 
$\mathcal{P}_l
$
be as in \S~\ref{Preliminaries}
for $0\leqq j\leqq l\leqq m$.
Assume that $\mathcal{E}$ be a nef vector bundle of rank $r$ as in \S~\ref{easyconstraints}.
In this section, we assume 
that the base field $K$ is of characteristic zero,
and that $X$, $G_0,\dots,G_m$, and $\mathcal{O}_X(1)$ are as in the cases (2) and (3) in \S~\ref{easyconstraints}
with $n\geqq 3$.
In particular, $X$ is a smooth hyperquadric $\mathbb{Q}^n$ of dimension $n\geqq 3$.
Let $d$ be as in \S~\ref{easyconstraints}.

\begin{lemma}\label{withoutO(1)}
Suppose 
that $d=1$ and that $\Hom (\mathcal{O}(1),\mathcal{E})=0$.
Let $\mathbb{Q}^2$ be a 
%$2$-dimensional 
linear section 
of dimension two
of $\mathbb{Q}^n$.
Then 
$\Hom (\mathcal{O}_{\mathbb{Q}^2},\mathcal{E}|_{\mathbb{Q}^2})\cong \Hom (\mathcal{O},\mathcal{E})$.
\end{lemma}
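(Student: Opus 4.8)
The plan is to propagate the hypothesis down a chain of hyperplane sections, showing that the restriction map is an isomorphism at each stage. Fix a flag $\mathbb{Q}^n = Q_n \supset Q_{n-1} \supset \cdots \supset Q_2 = \mathbb{Q}^2$ of smooth linear sections in which each $Q_{k-1}$ is cut out of $Q_k$ by the ample generator $\mathcal{O}_{Q_k}(1)$; a general such flag exists by Bertini, and since any two smooth two-dimensional linear sections of $\mathbb{Q}^n$ are projectively equivalent it suffices to treat one of them. Each $\mathcal{E}|_{Q_k}$ is a nef vector bundle of rank $r$ with $\det(\mathcal{E}|_{Q_k}) \cong \mathcal{O}_{Q_k}(1)$, so the condition ``$d=1$'' holds on every $Q_k$, and for $3 \le k \le n$ one has the exact sequence
\[
0 \to \mathcal{E}|_{Q_k}(-1) \to \mathcal{E}|_{Q_k} \to \mathcal{E}|_{Q_{k-1}} \to 0 .
\]

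First I would invoke Lemma~\ref{KodairaOnProjectiveSpace}(1) on $Q_k$ (a smooth Fano variety of dimension $k$ with $\Pic \cong \mathbb{Z}$ and $c_1(Q_k) = k$), applied to the nef bundle $\mathcal{E}|_{Q_k}$: since $2 < k$ for all $3 \le k \le n$ we get $H^q(Q_k, \mathcal{E}|_{Q_k}(-1)) = 0$ for $q>0$, and since $3 < k$ for $4 \le k \le n$ we get $H^q(Q_k, \mathcal{E}|_{Q_k}(-2)) = 0$ for $q>0$. Feeding the first vanishing into the long exact sequence of the displayed sequence shows that the restriction map $H^0(Q_k, \mathcal{E}|_{Q_k}) \to H^0(Q_{k-1}, \mathcal{E}|_{Q_{k-1}})$ is surjective with kernel $H^0(Q_k, \mathcal{E}|_{Q_k}(-1))$. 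Thus the statement reduces to proving
\[
H^0(Q_k, \mathcal{E}|_{Q_k}(-1)) = 0 \qquad (3 \le k \le n),
\]
and this is the one point requiring care: we are only given it for $k = n$, namely $\Hom(\mathcal{O}(1), \mathcal{E}) = H^0(\mathcal{E}(-1)) = 0$, and restricting to a smaller quadric could a priori create new sections.

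I would establish this by descending induction on $k$. The case $k = n$ is the hypothesis. For $k < n$, twist the exact sequence for $Q_{k+1} \supset Q_k$ by $\mathcal{O}(-1)$:
\[
0 \to \mathcal{E}|_{Q_{k+1}}(-2) \to \mathcal{E}|_{Q_{k+1}}(-1) \to \mathcal{E}|_{Q_k}(-1) \to 0 ;
\]
its long exact sequence embeds $H^0(Q_k, \mathcal{E}|_{Q_k}(-1))$ into $H^1(Q_{k+1}, \mathcal{E}|_{Q_{k+1}}(-2))$ once $H^0(Q_{k+1}, \mathcal{E}|_{Q_{k+1}}(-1)) = 0$ (the inductive hypothesis), and $H^1(Q_{k+1}, \mathcal{E}|_{Q_{k+1}}(-2)) = 0$ by the second vanishing above, which applies because $k+1 \ge 4$. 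Hence $H^0(Q_k, \mathcal{E}|_{Q_k}(-1)) = 0$, completing the induction. Consequently every restriction map in the chain is an isomorphism, and composing them gives $\Hom(\mathcal{O}, \mathcal{E}) = H^0(\mathbb{Q}^n, \mathcal{E}) \cong H^0(\mathbb{Q}^2, \mathcal{E}|_{\mathbb{Q}^2}) = \Hom(\mathcal{O}_{\mathbb{Q}^2}, \mathcal{E}|_{\mathbb{Q}^2})$. The only genuine subtlety is the descending induction; it works precisely because the hypothesis is imposed on the ambient $\mathbb{Q}^n$, leaving ``one dimension of slack'' so that Kodaira vanishing for the $\mathcal{O}(-2)$-twist on $Q_{k+1}$ is available at each step.
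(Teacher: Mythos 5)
Your proof is correct, but it takes a genuinely different route from the paper's. The paper never leaves the ambient $\mathbb{Q}^n$: it combines the effective bound $d_{\min}\leqq 1$ of Corollary~\ref{quadricResol3} with the hypothesis $\Hom(\mathcal{O}(1),\mathcal{E})=0$ to get $\RHom(\mathcal{O}(k),\mathcal{E}(1))=0$ for $2\leqq k\leqq n-1$, and then the Koszul triangle for a hyperplane section $H$ gives $\RHom(\mathcal{O}(k-1),\mathcal{E}(1))\cong\RHom(\mathcal{O}_H(k-1),\mathcal{E}|_H(1))$ for $1\leqq k-1\leqq n-2$; these isomorphisms yield $H^0(\mathcal{E})\cong H^0(\mathcal{E}|_H)$ and at the same time reproduce on $H$ exactly the vanishing needed for the next step, so the induction runs with no smoothness assumption on the intermediate sections. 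You instead restrict $\mathcal{E}$ to a flag of smooth linear sections $Q_k$ and apply Lemma~\ref{KodairaOnProjectiveSpace}~(1) to each restricted bundle (legitimately: $Q_k$ is a smooth quadric of dimension $k\geqq 3$, hence Fano of index $k$ with $\Pic\cong\mathbb{Z}$, and $\mathcal{E}|_{Q_k}$ is nef with $\det\cong\mathcal{O}(1)$), and your descending induction, which embeds $H^0(\mathcal{E}|_{Q_k}(-1))$ into $H^1(\mathcal{E}|_{Q_{k+1}}(-2))$, correctly propagates the hypothesis $H^0(\mathcal{E}(-1))=0$ down the flag, including the edge cases $n=3,4$. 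This is more elementary --- no $d_{\min}$, no derived-category bookkeeping, no spinor-bundle vanishing --- at the price of needing smooth intermediate sections. One step deserves tightening: the reduction ``any two smooth $\mathbb{Q}^2$'s are projectively equivalent, so one general flag suffices'' is valid only because an automorphism of $\mathbb{Q}^n$ replaces $\mathcal{E}$ by its pullback, which still satisfies the hypotheses (nef, $d=1$, $\Hom(\mathcal{O}(1),\cdot)=0$); as phrased it sounds as if only the section is moved. It is cleaner to observe directly that any given smooth two-dimensional linear section extends to a flag of smooth linear sections (extend the corresponding nondegenerate $4$-dimensional subspace by an orthogonal basis of its orthogonal complement). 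Note also that the paper's argument applies to an arbitrary two-dimensional linear section, whereas yours covers the smooth ones; this costs nothing in the applications, since Theorem~\ref{d=1OnHyperquadric} only uses smooth sections.
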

\begin{proof}
We have a distinguished triangle
\[
\RHom(\mathcal{O}(k),\mathcal{E}(1))
\to
\RHom(\mathcal{O}(k-1),\mathcal{E}(1))
\to
\RHom(\mathcal{O}_H(k-1),\mathcal{E}|_H(1))
\to
\]
for a hyperplane section $H$ of $\mathbb{Q}^n$ and a integer $k$.
Since $\Hom (\mathcal{O}(2),\mathcal{E}(1))=0$ by assumption
and $d_{\min}\leqq 1$ by Corollary~\ref{quadricResol3}, 
we have $\RHom(\mathcal{O}(k),\mathcal{E}(1))=0$
for $2\leqq k\leqq n-1$.
Therefore $\RHom(\mathcal{O}(k-1),\mathcal{E}(1))
\cong
\RHom(\mathcal{O}_H(k-1),\mathcal{E}|_H(1))$ for $1\leqq k-1\leqq n-2$.
Hence we see that 
$\Hom(\mathcal{O}(1),\mathcal{E}(1))
\cong
\Hom(\mathcal{O}_H(1),\mathcal{E}|_H(1))$
and that if $n\geqq 4$
then $\Hom(\mathcal{O}_H(2),\mathcal{E}|_H(1))$ is zero.
Now we obtain the desired formulas by induction.
\end{proof}
\begin{prop}\label{dmin0spinor}
Suppose that $d=1$ and 
that $\Hom (\mathcal{O}(1),\mathcal{E})=0$.
If $\Hom (\mathcal{S},\mathcal{E})\neq 0$
for a spinor bundle $\mathcal{S}$,
then $n=3$ or $4$, and $\mathcal{E}\cong \mathcal{S}\oplus \mathcal{O}^{\oplus r-2}$.
\end{prop}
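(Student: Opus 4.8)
The plan is to produce from $\varphi$ a short exact sequence $0\to\mathcal{S}\to\mathcal{E}\to\mathcal{Q}\to 0$, bound $n$ by examining $\det\mathcal{Q}$, and then identify $\mathcal{Q}$ with $\mathcal{O}^{\oplus r-2}$. Throughout I would record from Theorem~\ref{Usefulottaviani} that, writing $s=\lfloor\frac{n-1}{2}\rfloor\geqq 1$, a spinor bundle $\mathcal{S}$ on $\mathbb{Q}^n$ has $\rk\mathcal{S}=2^s$, $\det\mathcal{S}\cong\mathcal{O}(2^{s-1})$ and is $\mu$-stable; moreover $H^1(\mathbb{Q}^n,\mathcal{S})=0$ by Bott's vanishing theorem (equivalently, by the strongness of the sequence $(G_0,\dots,G_m)$), and $\mathcal{E}|_L\cong\mathcal{O}_L(1)\oplus\mathcal{O}_L^{\oplus r-1}$ on a line $L$ since $\mathcal{E}$ is nef with $\det\mathcal{E}\cong\mathcal{O}(1)$.

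\emph{Step 1: $\varphi$ is injective.} Suppose not. Then $\mathcal{K}:=\Ker\varphi$ is a nonzero proper subsheaf of $\mathcal{S}$, so $\mu(\mathcal{K})<\mu(\mathcal{S})=1/2$ by $\mu$-stability, and hence the image $\mathcal{I}:=\mathcal{S}/\mathcal{K}\subseteq\mathcal{E}$ has rank $t$ with $1\leqq t<2^s$ and $\mu(\mathcal{I})>1/2$. Let $\overline{\mathcal{I}}\subseteq\mathcal{E}$ be the saturation of $\mathcal{I}$, so that $\mathcal{E}/\overline{\mathcal{I}}$ is torsion-free of rank $r-t$; by Lemma~\ref{nefquotient}, $\det(\mathcal{E}/\overline{\mathcal{I}})\cong\mathcal{O}(k)$ with $k\geqq 0$, hence $c_1(\overline{\mathcal{I}})=1-k\leqq 1$ and $\mu(\overline{\mathcal{I}})\leqq 1/t$. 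Since $1/2<\mu(\mathcal{I})\leqq\mu(\overline{\mathcal{I}})$, we obtain $t=1$, $k=0$, and so $\overline{\mathcal{I}}\cong\mathcal{O}(1)\otimes\mathcal{I}_W$ for a closed subscheme $W$ of codimension $\geqq 2$. But then $\Hom(\overline{\mathcal{I}},\mathcal{E})\neq 0$, whereas the exact sequence $0\to\mathcal{O}(1)\otimes\mathcal{I}_W\to\mathcal{O}(1)\to\mathcal{O}_W(1)\to 0$ together with $\Hom(\mathcal{O}_W(1),\mathcal{E})=\Ext^1(\mathcal{O}_W(1),\mathcal{E})=0$ (as $W$ has codimension $\geqq 2$ and $\mathcal{E}$ is locally free) gives $\Hom(\overline{\mathcal{I}},\mathcal{E})\cong\Hom(\mathcal{O}(1),\mathcal{E})=0$, a contradiction. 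Thus $\varphi$ is injective, and we set $\mathcal{Q}:=\mathcal{E}/\mathcal{S}$, a coherent quotient of $\mathcal{E}$ with $\det\mathcal{Q}\cong\mathcal{O}(1-2^{s-1})$.

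\emph{Steps 2 and 3.} Since $\mathcal{Q}$ is a quotient of the nef bundle $\mathcal{E}$, $\det\mathcal{Q}$ is nef by Lemma~\ref{nefquotient}, so $1-2^{s-1}\geqq 0$, i.e. $s=1$; together with $n\geqq 3$ this forces $n\in\{3,4\}$. In that case $\rk\mathcal{S}=2$, $\det\mathcal{S}\cong\mathcal{O}(1)$ and $\det\mathcal{Q}\cong\mathcal{O}_X$; by the last assertion of Lemma~\ref{nefquotient} the torsion subsheaf of $\mathcal{Q}$ is supported in codimension $\geqq 2$, whence $\mathcal{Q}$ is torsion-free by Lemma~\ref{quotientTorsionFree} applied to $0\to\mathcal{S}\to\mathcal{E}\to\mathcal{Q}\to 0$, and the restriction-to-curves argument in the proof of Proposition~\ref{generalO(1)double} then shows $\mathcal{Q}$ is locally free. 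Being nef with trivial determinant, $\mathcal{Q}\cong\mathcal{O}^{\oplus r-2}$ by the structure result for such bundles on $\mathbb{Q}^n$ (cited in the remark following Proposition~\ref{dsubExist}). Finally $\Ext^1(\mathcal{O}^{\oplus r-2},\mathcal{S})\cong H^1(\mathcal{S})^{\oplus r-2}=0$, so the sequence $0\to\mathcal{S}\to\mathcal{E}\to\mathcal{O}^{\oplus r-2}\to 0$ splits and $\mathcal{E}\cong\mathcal{S}\oplus\mathcal{O}^{\oplus r-2}$.

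The crux is Step 1: one must play the $\mu$-stability of the spinor bundle against the hypothesis $\Hom(\mathcal{O}(1),\mathcal{E})=0$ and the nefness of $\mathcal{E}$ to rule out any rank drop of $\varphi$, which forces a careful treatment of saturations and of torsion sheaves of small codimension. Once the sequence $0\to\mathcal{S}\to\mathcal{E}\to\mathcal{Q}\to 0$ is in hand, the bound on $n$ and the splitting are routine applications of the lemmas already established.
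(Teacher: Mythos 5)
Your proposal is correct, but it follows a genuinely different route from the paper. The paper never proves injectivity of $\varphi$ directly: it first shows $\varphi|_{\mathbb{Q}^2}\neq 0$ for every smooth two-dimensional linear section, then uses the surface classification (Theorem~\ref{Chern1surface}) to pin down $\mathcal{E}|_{\mathbb{Q}^2}$ and $\im(\varphi|_{\mathbb{Q}^2})$, and plays the slope inequality $\mu(\mathcal{S})\leqq\mu(\im\varphi)$ coming from $\mu$-stability against this restricted picture to force $\varphi$ to be an isomorphism onto its image, to get $s=1$, and to see that the image is a subbundle. You instead apply $\mu$-stability to $\Ker\varphi$, pass to the saturation of the image, and use Lemma~\ref{nefquotient} plus the hypothesis $\Hom(\mathcal{O}(1),\mathcal{E})=0$ to rule out any rank drop; the bound $n\in\{3,4\}$ then falls out of nefness of $\det\mathcal{Q}\cong\mathcal{O}(1-2^{s-1})$, local freeness of $\mathcal{Q}$ comes from Lemma~\ref{quotientTorsionFree} together with the curve-restriction argument of Proposition~\ref{generalO(1)double}, and the splitting from $H^1(\mathcal{S})=0$. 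All the individual steps check out (note only that your slope $1/2$ is the $\mathcal{O}(1)$-coefficient per rank, which differs from the true degree by the harmless factor $\deg\mathbb{Q}^n=2$, and that your saturated rank-one subsheaf is saturated in a locally free sheaf, hence reflexive and already isomorphic to $\mathcal{O}(1)$, so the $\Ext$-vanishing detour via $\mathcal{I}_W(1)$ could be skipped). What each approach buys: yours is shorter and independent of Theorem~\ref{Chern1surface} and Lemma~\ref{withoutO(1)}, relying only on stability, the determinant-of-quotients lemma, and lemmas already established; the paper's restriction-to-$\mathbb{Q}^2$ method is heavier but stays within the cohomological/restriction framework it sets up and reuses the surface classification that is needed anyway in the proof of Theorem~\ref{d=1OnHyperquadric}.
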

\begin{proof}
Let $\varphi:\mathcal{S}\to\mathcal{E}$ be a non-zero element of $\Hom (\mathcal{S},\mathcal{E})$.

Suppose that 
$\varphi|_{H}=0$ for some smooth hyperplane section $H$ of $\mathbb{Q}^n$.
Then we have 
$\Hom (\mathcal{S},\mathcal{E}(-1))\neq 0$;
let $\psi$ be a non-zero element of $\Hom (\mathcal{S},\mathcal{E}(-1))$.
We have $\psi|_{L}\neq 0$ for a general $2$-dimensional linear section $L$ of $\mathbb{Q}^n$.
Note
here that $\mathcal{S}|_L\cong (\mathcal{O}(1,0)\oplus \mathcal{O}(0,1))^{\oplus s}$,
where $s=\lfloor \frac{n-1}{2}\rfloor$,
by Theorem~\ref{Usefulottaviani}.
Hence $H^0((\mathcal{S}|_L)^{\vee}\otimes \mathcal{E}|_L(-1))=0$ by Theorem~\ref{Chern1surface}.
This contradicts the fact that $\psi|_L\neq 0$. 
Hence 
$\varphi|_{H}\neq 0$ for any smooth hyperplane 
section $H$ of $\mathbb{Q}^n$.
Since 
the restriction of a spinor bundle to a smooth hyperplane section
is again a spinor bundle or a direct sum of spinor bundles
by Theorem~\ref{Usefulottaviani},
the argument above implies, by induction,
that $\varphi|_{\mathbb{Q}^2}\neq 0$ for any $2$-dimensional smooth linear section
$\mathbb{Q}^2$ of $\mathbb{Q}^n$.

Denote by $\mathcal{Q}$ the image $\im (\varphi)$ of $\varphi$
and by $\mathcal{F}$ the cokernel $\Coker(\varphi)$ of $\varphi$.
Let $D$ be the singular locus of $\mathcal{F}$,
i.e., let its complement $X\setminus D$ be the set of points at which $\mathcal{F}$ is locally free.
Let $E$ be the singular locus of $\mathcal{Q}$.
Then $E$ is contained in $D$.
Since $\mathcal{Q}$ is torsion-free, $E$ has codimension $\geqq 2$.
Note that for each point $x$ in $\mathbb{Q}^n$ we can take a smooth $2$-dimensional linear section $L$
of $\mathbb{Q}^n$ such that $L$ contains $x$, that $L$ is not contained in $D$,
and that $L\cap E$ has codimension $\geqq 2$ in $L$.
We have a surjection $\mathcal{Q}|_L\to \im (\varphi|_L)$.
On the other hand, $\mathcal{Q}|_{L\setminus D}\to \mathcal{E}|_{L\setminus D}$ is injective.
Since $\mathcal{Q}|_{L\setminus E}$ is torsion free, 
we see that $\mathcal{Q}|_{L\setminus E}\to \mathcal{E}|_{L\setminus E}$ is injective.
Hence $(\mathcal{Q}|_L)|_{L\setminus E}\to \im (\varphi|_L)|_{L\setminus E}$ is injective,
and therefore 
$(\mathcal{Q}|_L)|_{L\setminus E}\to \im (\varphi|_L)|_{L\setminus E}$ 
is an isomorphism.

By Theorem~\ref{Usefulottaviani}, 
we see 
that 
$\det\mathcal{S}\cong \mathcal{O}(2^{s-1})$, that $\rk \mathcal{S}=2^s$,
and that $\mathcal{S}$ is $\mu$-stable
with respect to $\mathcal{O}(1)$.
We have $
1=\deg\mathcal{S}/\rk\mathcal{S}
\leqq 
\deg\mathcal{Q}/\rk\mathcal{Q}$,
since the degree $\deg \mathcal{S}$ of $\mathcal{S}$ with respect to $\mathcal{O}(1)$
is $(\det\mathcal{S}).\mathcal{O}(1)^{n-1}=2^s$.

The existence of $\varphi|_{L}\neq 0$ implies 
that $\mathcal{E}|_{L}$ is isomorphic to either $\mathcal{O}(1)\oplus \mathcal{O}^{\oplus r-1}$
or $\mathcal{O}(1,0)\oplus \mathcal{O}(0,1)\oplus \mathcal{O}^{\oplus r-2}$
by Theorem~\ref{Chern1surface}.

Suppose that 
$\mathcal{E}|_{L}$ is isomorphic $\mathcal{O}(1)\oplus \mathcal{O}^{\oplus r-1}$.
Since $\mathcal{S}|_{L}\cong (\mathcal{O}(1,0)\oplus \mathcal{O}(0,1))^{\oplus s}$,
$\im (\varphi|_{L})$ is a 
subsheaf of 
a subsheaf $\mathcal{O}_L(1)$ of $\mathcal{E}|_L$.
Since $\im (\varphi|_L)|_{L\setminus E}$ is isomorphic to $\mathcal{Q}|_{L\setminus E}$,
we see that $\rk \mathcal{Q}=1$.
Let $\mathcal{Q}^{\vee\vee}$ be the reflexive hull of $\mathcal{Q}$.
Then $\mathcal{Q}^{\vee\vee}$ is a line bundle 
and it is a subsheaf of $\mathcal{E}$.
Since 
$
1\leqq \deg\mathcal{Q}=\deg\mathcal{Q}^{\vee\vee}
$,
this implies that $\mathcal{E}$ contains $\mathcal{O}(1)$ as a subsheaf.
This contradicts the assumption that $\Hom(\mathcal{O}(1),\mathcal{E})=0$.
Therefore this case does not occur.

Suppose that 
$\mathcal{E}|_{L}
\cong 
\mathcal{O}(1,0)\oplus \mathcal{O}(0,1)\oplus \mathcal{O}^{\oplus r-2}$.
Since $\mathcal{S}|_{L}\cong (\mathcal{O}(1,0)\oplus \mathcal{O}(0,1))^{\oplus s}$,
$\im (\varphi|_{L})$ is either one of 
$\mathcal{O}(1,0)$, $\mathcal{O}(0,1)$,
or $\mathcal{O}(1,0)\oplus \mathcal{O}(0,1)$.
Since $\det \im (\varphi|_L)|_{L\setminus E}$ is isomorphic to $\det\mathcal{Q}|_{L\setminus E}$,
we see that a morphism  $(\det\mathcal{Q})|_{L}\to \det \im (\varphi|_L)$ of line bundles 
is surjective in codimension two.
Therefore $(\det\mathcal{Q})|_{L}\to \det \im (\varphi|_L)$ is an isomorphism.
Since $\det \im (\varphi|_L)$ is thus the restriction of the line bundle $\det\mathcal{Q}$ on $\mathbb{Q}^n$,
we conclude that $\im(\varphi|_{L})\cong \mathcal{O}(1,0)\oplus \mathcal{O}(0,1)$.
Thus $\mathcal{Q}$ has rank two and $\det\mathcal{Q}\cong\mathcal{O}(1)$.
Hence 
$
\deg\mathcal{Q}/\rk\mathcal{Q}=1
$.
Therefore
we have 
$
\deg\mathcal{S}/\rk\mathcal{S}= \deg\mathcal{Q}/\rk\mathcal{Q}
$,
which implies that $\varphi$ is an isomorphism onto its image $\mathcal{Q}$.
Thus $s=1$, i.e., $n=3$ or $4$.
Since $\mathcal{Q}$ is now a vector bundle, so is $\mathcal{Q}|_{L}$.
Since two vector bundles $\mathcal{Q}|_L$ and $\im (\varphi|_L)$ are isomorphic in codimension one,
we see that $\mathcal{Q}|_L$ and $\im (\varphi|_L)$ are isomorphic.
Since $\im (\varphi|_L)$ is a subbundle of $\mathcal{E}|_L$,
we conclude that $\mathcal{S}$ is a subbundle of $\mathcal{E}$.
Thus $\mathcal{F}$ is a nef vector bundle with $\det\mathcal{F}\cong 0$.
Hence $\mathcal{F}\cong \mathcal{O}^{\oplus r-2}$ and we obtain $\mathcal{E}\cong \mathcal{S}\oplus \mathcal{O}^{\oplus r-2}$.
\end{proof}
Based on our framework, we give a different proof of the following theorem of Peternell-Szurek-Wi\'{s}niewski~\cite[Theorem 2]{pswnef}.

\begin{thm}\label{d=1OnHyperquadric}
Suppose that $d=1$, i.e., that $\det\mathcal{E}\cong \mathcal{O}(1)$. 
Then $\mathcal{E}$ satisfies one of the following:
\begin{enumerate}
\item $\mathcal{E}\cong \mathcal{O}(1)\oplus \mathcal{O}^{\oplus r-1}$.
\item $\mathcal{E}\cong \mathcal{S}\oplus \mathcal{O}^{\oplus r-2}$, where 
$\mathcal{S}$ is a spinor bundle and $n=3$ or $4$.
\item $\mathcal{E}$ fits in an exact sequence
\[
0\to \mathcal{O}(-1)\to \mathcal{O}^{\oplus r+1}\to \mathcal{E}\to 0.
\]
\end{enumerate}
\end{thm}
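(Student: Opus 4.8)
The plan is to follow the strategy of Theorems~\ref{d=1onProSpace} and \ref{Chern1surface}: use the results of the preceding sections to dispose of all ``split'' cases, and in the single remaining case extract the resolution from the Bondal spectral sequence applied to $\mathcal{E}$.

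First, if $r=1$ then $\mathcal{E}\cong\mathcal{O}(1)$, which is case~(1); assume $r\geqq2$. By Corollary~\ref{quadricResol3} we have $d_{\min}\leqq d=1$, and by Proposition~\ref{firstConstraint} (2) (d) (with $d=1<r$) we have $d_{\min}\geqq0$, so $d_{\min}\in\{0,1\}$. If $\Hom(\mathcal{O}(1),\mathcal{E})\neq0$, Proposition~\ref{dsubExist} gives $\mathcal{E}\cong\mathcal{O}(1)\oplus\mathcal{O}^{\oplus r-1}$, i.e.\ case~(1). If $\Hom(\mathcal{O}(1),\mathcal{E})=0$ but $\Hom(\mathcal{S},\mathcal{E})\neq0$ for some spinor bundle $\mathcal{S}$ (one of $\mathcal{S}^{\pm}$ when $n$ is even), then Proposition~\ref{dmin0spinor} forces $n\in\{3,4\}$ and $\mathcal{E}\cong\mathcal{S}\oplus\mathcal{O}^{\oplus r-2}$, i.e.\ case~(2). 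So it remains to treat the case in which $\Hom(\mathcal{O}(1),\mathcal{E})=0$ and $\Hom(G_i,\mathcal{E})=0$ for every spinor index $i$. Here I would first show $d_{\min}=1$: if $d_{\min}=0$ then $e_{0,j}=\dim\Hom(G_j,\mathcal{E})=0$ for every $j\geqq1$ --- for the indices with $G_j=\mathcal{O}(k)$, $k\geqq1$, because $H^0(\mathcal{E}(-k))\hookrightarrow H^0(\mathcal{E}(-1))=0$, and for the spinor indices by assumption --- so Proposition~\ref{easyConst} kills all $e_{l,j}$ with $l\geqq1$, the standard resolution collapses to $\mathcal{E}\cong\mathcal{O}^{\oplus e_{0,0}}$, and this contradicts $\det\mathcal{E}\cong\mathcal{O}(1)$.

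With $d_{\min}=1$, I would apply the Bondal spectral sequence $E_2^{p,q}=\caltor_{-p}^{A}(\Ext^{q}(G,\mathcal{E}),G)\Rightarrow\mathcal{E}$ exactly as in the proof of Theorem~\ref{Chern1surface}. Under the standing assumptions $\Hom(G,\mathcal{E})=\Hom(\mathcal{O},\mathcal{E})=H^{0}(\mathcal{E})$, which is the projective $A$-module $P_{0}^{\oplus h^{0}(\mathcal{E})}$. From Lemma~\ref{KodairaOnProjectiveSpace} one gets $H^{q}(\mathcal{E}(-k))=0$ for all $q>0$ and $0\leqq k\leqq n-2$; from Lemma~\ref{KodairaOnQuadric} (for $n\geqq4$) or from the exact sequence $0\to\mathcal{S}(-1)\to\mathcal{O}^{\oplus 4}\to\mathcal{S}\to0$ (for $n=3$) together with the self-duality $\mathcal{S}^{\vee}\cong\mathcal{S}(-1)$ of Theorem~\ref{Usefulottaviani}, one gets $\Ext^{q}(G_i,\mathcal{E})=0$ for all $q>0$ and every spinor index $i$; and from Serre duality together with Lemma~\ref{nefquotient} (a nef bundle has no quotient sheaf of negative determinant, which kills the $H^{0}$ of all negative twists of $\mathcal{E}^{\vee}$, while $\mu$-stability of the spinor bundles handles the remaining terms) one gets $\Ext^{q}(G_j,\mathcal{E})=0$ for $q\geqq n$ and all $j$. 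Granting in addition that $H^{q}(\mathcal{E}(1-n))=0$ for $0<q<n-1$ (the main point, discussed below), it follows that $\Ext^{q}(G,\mathcal{E})=0$ for $q\notin\{0,n-1\}$ and that $\Ext^{n-1}(G,\mathcal{E})=H^{n-1}(\mathcal{E}(1-n))$ is a right $A$-module supported at the top term $G_m=\mathcal{O}(n-1)$, hence a direct sum of copies of the simple module $S_m$; since $\RHom(G,\mathcal{O}(-1)[n])\cong S_m$ (a Bott/Serre-duality computation), Bondal's theorem gives $S_m\lotimes_{A}G\cong\mathcal{O}(-1)[n]$. Thus the $E_2$-page carries only $E_2^{0,0}\cong\mathcal{O}^{\oplus h^{0}(\mathcal{E})}$ and $E_2^{-n,n-1}\cong\mathcal{O}(-1)^{\oplus t}$ with $t=\dim H^{n-1}(\mathcal{E}(1-n))$, and convergence to $\mathcal{E}$ forces an exact sequence
\[
0\to\mathcal{O}(-1)^{\oplus t}\to\mathcal{O}^{\oplus h^{0}(\mathcal{E})}\to\mathcal{E}\to0;
\]
taking determinants gives $t=1$ since $\det\mathcal{E}\cong\mathcal{O}(1)$, and then a rank count gives $h^{0}(\mathcal{E})=r+1$, i.e.\ case~(3).

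The main obstacle, just as in Theorem~\ref{d2OnProSpace}, is to establish $H^{q}(\mathcal{E}(1-n))=0$ for $0<q<n-1$. I would prove this by induction on $n$, restricting to a smooth hyperplane section $H\cong\mathbb{Q}^{n-1}$ via
\[
0\to\mathcal{E}(1-n)\to\mathcal{E}(2-n)\to\mathcal{E}|_{H}\bigl(1-(n-1)\bigr)\to0,
\]
using $d_{\min}\leqq1$ to kill $H^{\bullet}(\mathcal{E}(2-n))$ in positive degrees and then invoking the classification on $\mathbb{Q}^{n-1}$ for $\mathcal{E}|_{H}$, the base case $n-1=2$ being Theorem~\ref{Chern1surface}. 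The delicate step is to verify that $\mathcal{E}|_{H}$ is again of type~(3) rather than of type~(1) or~(2), i.e.\ that $\Hom(\mathcal{O}_{H}(1),\mathcal{E}|_{H})=0$ and $\Hom(\mathcal{S}_{H},\mathcal{E}|_{H})=0$; this one reads off from the restriction triangles of Lemma~\ref{withoutO(1)} and the way spinor bundles restrict to hyperplane sections (Theorem~\ref{Usefulottaviani}). The low-dimensional quadrics $\mathbb{Q}^{3},\mathbb{Q}^{4},\mathbb{Q}^{5}$, where the Kodaira-vanishing bounds for the spinor $\Ext$-groups of negative twists are not yet in force, will need separate bookkeeping --- most conveniently by passing directly to a two-dimensional linear section $\mathbb{Q}^{2}$ and combining Theorem~\ref{Chern1surface}, Lemma~\ref{withoutO(1)}, and the $\mu$-stability and duality statements of Theorem~\ref{Usefulottaviani}; this route also yields $h^{0}(\mathcal{E})=h^{0}(\mathcal{E}|_{\mathbb{Q}^{2}})=r+1$ directly, giving an independent verification of the rank count above.
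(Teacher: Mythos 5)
Your reductions up to $d_{\min}=1$ coincide with the paper's: the case $r=1$, case (1) via Proposition~\ref{dsubExist}, case (2) via Proposition~\ref{dmin0spinor}, and the elimination of $d_{\min}=0$ through the standard resolution are exactly the published argument. For the remaining case, however, you take a genuinely different route: you want to compute $\Ext^q(G,\mathcal{E})$ for the \emph{untwisted} $\mathcal{E}$ and run the Bondal spectral sequence, with the key vanishing $H^q(\mathcal{E}(1-n))=0$ for $0<q<n-1$ supplied by induction on $n$. The paper never computes these groups: since $d_{\min}=1$ it writes down the standard resolution of $\mathcal{E}(1)$, determines all $e_{l,j}$ from rank and determinant in terms of $e=h^0(\mathcal{E})$, and applies $\RHom(\mathcal{S}(1),-)$ (Lemma~\ref{S(1)toS}) to the two short exact sequences cut from that resolution to obtain $e\leqq r+1$; the bound $e\geqq r+1$ and the global generation of $\mathcal{E}$ come from restriction to a smooth $\mathbb{Q}^2$ via Lemma~\ref{withoutO(1)} and Theorem~\ref{Chern1surface}, giving the surjection $\mathcal{O}^{\oplus r+1}\to\mathcal{E}$ with kernel $\mathcal{O}(-1)$.

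The step you defer to ``separate bookkeeping'' is a genuine gap, and it sits exactly where the real work is. For $n\geqq 4$ your inductive step is sound: $\Hom(\mathcal{O}_H(1),\mathcal{E}|_H)$ injects into $H^1(\mathcal{E}(-2))$, which vanishes by Lemma~\ref{KodairaOnProjectiveSpace}, so a type (1) restriction is excluded, and a type (2) restriction (possible only for $n=4,5$) would not disturb the needed vanishing since spinor bundles and line bundles on quadrics have no intermediate cohomology (you should still record this check). But the base case $n=3$ is circular: there the statement to prove is $H^1(\mathcal{E}(-2))=0$, and the restriction sequence together with $H^0(\mathcal{E}(-1))=0$ and $H^1(\mathcal{E}(-1))=0$ gives $H^1(\mathcal{E}(-2))\cong H^0(\mathcal{E}|_{\mathbb{Q}^2}(-1))$, so the vanishing is \emph{equivalent} to the assertion that $\mathcal{E}|_{\mathbb{Q}^2}$ is not of type (1) of Theorem~\ref{Chern1surface}; neither Lemma~\ref{withoutO(1)} (which only identifies $H^0(\mathcal{E}|_{\mathbb{Q}^2})$ with $H^0(\mathcal{E})$) nor Theorem~\ref{Usefulottaviani} says anything about $\Hom(\mathcal{O}_{\mathbb{Q}^2}(1),\mathcal{E}|_{\mathbb{Q}^2})$. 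That this branch cannot be waved away is precisely the lesson of Theorem~\ref{d2OnProSpace}, where the analogous possibility $n=3$, $H^1(\mathcal{E}(-2))\cong K$ survives and produces the extra case (6); in the paper it is the counting bound $e\leqq r+1$ that rules out the larger restriction types a posteriori, and your plan supplies no substitute for it. A second gap of the same kind: for $n=3$ your argument for $\Ext^q(\mathcal{S},\mathcal{E})=0$ via $0\to\mathcal{S}(-1)\to\mathcal{O}^{\oplus 4}\to\mathcal{S}\to 0$ only settles $q\geqq 2$; at $q=1$ it exhibits $\Ext^1(\mathcal{S},\mathcal{E})$ as the cokernel of $H^0(\mathcal{E})^{\oplus 4}\to \Hom(\mathcal{S},\mathcal{E}(1))$, whose surjectivity you do not prove, and Lemma~\ref{KodairaOnQuadric} does not apply to this twist when $n=3$. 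Unless you can close these two points for $\mathbb{Q}^3$ by an independent argument, the induction has no base and the spectral-sequence computation does not start.
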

\begin{proof}
If $d_{\min}<0$, then $r=1$ by Proposition~\ref{firstConstraint} (2) (d).
If $r=1$, then $\mathcal{E}\cong \mathcal{O}(1)$ and $d_{\min}=-1$.
In the following, we assume that $d_{\min}\geqq 0$ and that $r\geqq 2$.
We know that $d_{\min}\leqq 1$ by Corollary~\ref{quadricResol3}.

If $\Hom(\mathcal{O}(1),\mathcal{E})\neq 0$, then $\mathcal{E}\cong \mathcal{O}(1)\oplus \mathcal{O}^{\oplus r-1}$
by Proposition~\ref{dsubExist}. In the following, we assume that $\Hom(\mathcal{O}(1),\mathcal{E})= 0$.

If $\Hom(\mathcal{S},\mathcal{E})\neq 0$, then $n=3$ or $4$,
and $\mathcal{E}\cong \mathcal{S}\oplus \mathcal{O}^{\oplus r-2}$
by Proposition~\ref{dmin0spinor}.
In the following, we assume that $\Hom(\mathcal{S},\mathcal{E})= 0$.

Under the assumptions that $d_{\min}\geqq 0$, that $\Hom(\mathcal{O}(1),\mathcal{E})= 0$, and that 
$\Hom(\mathcal{S},\mathcal{E})= 0$, we have $d_{\min}=1$.
Indeed, if $d_{\min}$ were zero, then 
$\mathcal{E}$ would be isomorphic to $\mathcal{O}^{\oplus r}$ by 
the standard resolution,
which contradicts that $d=1$.

In the following, we assume $d_{\min}=1$.
Since $\Hom(\mathcal{O}(2),\mathcal{E}(1))= 0$, we see that $e_{0,3}=0$ if $n$ is odd,
and that $e_{0,4}=0$ if $n$ is even.
Set $e=\dim H^0(\mathcal{E})$. Then $e_{0,2}=e$ if $n$ is odd,
and $e_{0,3}=e$ if $n$ is even.
By Lemma~\ref{withoutO(1)}, we have $e=\dim H^0(\mathcal{E}|_{\mathbb{Q}^2})$
for any $2$-dimensional smooth linear section $\mathbb{Q}^2$
of $\mathbb{Q}^n$.
Moreover we have $\dim H^0(\mathcal{E}|_{\mathbb{Q}^2})\geqq r+1$ 
by Theorem~\ref{Chern1surface}.
Therefore we see that 
\[e\geqq r+1.\]

Note that $\mathcal{E}$ is globally generated
since $\mathcal{E}|_{\mathbb{Q}^2}$ is globally generated
by Theorem~\ref{Chern1surface} and $H^0(\mathcal{E})\cong H^0(\mathcal{E}|_{\mathbb{Q}^2})$
by Lemma~\ref{withoutO(1)}
for any 
$\mathbb{Q}^2$.
Hence we obtain the desired exact sequence
\[0\to\mathcal{O}(-1)\to \mathcal{O}^{\oplus r+1}\to \mathcal{E}\to 0
\]
if $e=r+1$.

In the following, we shall show that $e=r+1$.
Set $s=\lfloor \frac{n-1}{2}\rfloor$.
We divide the case according to whether $n$ is odd or not.

Suppose that $n$ is odd. Then $n=2s+1$.
The standard resolution of $\mathcal{E}(1)$
modified according to Proposition~\ref{secondconstraint}
is 
\[
0
\to \mathcal{O}^{\oplus e_{2,0}}
\to \mathcal{S}^{\oplus e_{1,1}}\oplus\mathcal{O}^{\oplus (e_{1,0}-e_{0,0})}
\to \mathcal{O}(1)^{\oplus e}\oplus\mathcal{S}^{\oplus e_{0,1}}
\to \mathcal{E}(1)\to 0.
\]
Since $\Hom(\mathcal{S},\mathcal{O}(1))\cong H^0(\mathcal{S})$
and $\dim H^0(\mathcal{S})=2^{s+1}$ by Theorem~\ref{Usefulottaviani}, 
we 
see 
$e_{1,1}=2^{s+1}e$ and 
$e_{2,0}=2^{2s+2}e=2^{n+1}e$.
Since $\det\mathcal{S}\cong \mathcal{O}(2^{s-1})$ by Theorem~\ref{Usefulottaviani}, by looking at $\det(\mathcal{E}(1))$,
we see that 
\[1+r=e+2^{s-1}(e_{0,1}-e_{1,1})=2^{s-1}e_{0,1}+(1-2^{n-1})e.\]
Hence 
\[e_{0,1}=2^{1-s}\{1+r+(2^{n-1}-1)e\}.
\]
Since $\rk \mathcal{S}=2^s$ by Theorem~\ref{Usefulottaviani},
by looking at $\rk\mathcal{E}(1)$, we see that 
\begin{equation*}
\begin{split}
r&=e+2\{1+r+(2^{n-1}-1)e\}-2^ne
-e_{1,0}
+e_{0,0}+2^{n+1}e\\
&=
2r+2+(2^{n+1}-1)e-e_{1,0}+e_{0,0}.
\end{split}
\end{equation*}
Hence $e_{1,0}-e_{0,0}=r+2+(2^{n+1}-1)e$.
Summing up, we have a locally free resolution
\[
0
\to 
\mathcal{P}_2
\to 
\mathcal{P}^{0}_1
\to 
\mathcal{P}^{0}_0
\to \mathcal{E}(1)\to 0,
\]
where 
\begin{equation*}
\begin{split}
\mathcal{P}_2&=\mathcal{O}^{\oplus 2^{n+1}e},\\
\mathcal{P}^{0}_1&=\mathcal{O}^{\oplus r+2+(2^{n+1}-1)e}\oplus\mathcal{S}^{\oplus 2^{s+1}e},\\
\mathcal{P}^{0}_0&=\mathcal{S}^{\oplus 2^{1-s}\{1+r+(2^{n-1}-1)e\}}\oplus\mathcal{O}(1)^{\oplus e}.
\end{split}
\end{equation*}
We split the above long exact sequence into the following two short exact sequences
\begin{equation*}
0
\to \mathcal{P}_2
\to \mathcal{P}^{0}_1
\to \mathcal{G}
\to 0,\quad
0
\to \mathcal{G}
\to \mathcal{P}^{0}_0
\to \mathcal{E}(1)\to 0.
\end{equation*}
Note that $\RHom(\mathcal{S}(1),\mathcal{O}(1))\cong 0$.
It follows from Theorem~\ref{Usefulottaviani} that 
\[\RHom(\mathcal{S}(1),\mathcal{O})\cong \RHom(\mathcal{O}(1),\mathcal{S}^{\vee})
\cong \RHom(\mathcal{O}(2),\mathcal{S}^{\vee}(1))\cong \RHom(\mathcal{O}(2),\mathcal{S})\cong 0.\]
Hence 
$\RHom(\mathcal{S}(1),\mathcal{P}_2)\cong 0$,
 and thus 
\[
\RHom(\mathcal{S}(1),\mathcal{P}^0_1)\cong \RHom(\mathcal{S}(1),\mathcal{G}).
\]
Since $\RHom(\mathcal{S}(1),\mathcal{S})\cong K[-1]$ by Lemma~\ref{S(1)toS},
we see that 
\begin{equation*}
\begin{split}
\RHom(\mathcal{S}(1),\mathcal{P}^{0}_0)
&
\cong K[-1]^{\oplus 2^{1-s}\{1+r+(2^{n-1}-1)e\}},
\\
\RHom(\mathcal{S}(1),\mathcal{P}^{0}_1)
&
\cong K[-1]^{\oplus 2^{s+1}e}.
\end{split}
\end{equation*}
Therefore we obtain a distinguished triangle
\[
K[-1]^{\oplus 2^{s+1}e}
\to
K[-1]^{\oplus 2^{1-s}\{1+r+(2^{n-1}-1)e\}}
\to
\RHom(\mathcal{S}(1),\mathcal{E}(1))
\to.
\]
Since we assume now that $\Hom (\mathcal{S},\mathcal{E})=0$,
we get an exact sequence
\[
0
\to 
K^{\oplus 2^{s+1}e}
\to
K^{\oplus 2^{1-s}\{1+r+(2^{n-1}-1)e\}}
\to
\Ext^1(\mathcal{S}(1),\mathcal{E}(1))
\to
0.
\]
In particular we have $2^{s+1}e\leqq 2^{1-s}\{1+r+(2^{n-1}-1)e\}$, i.e., $e\leqq r+1$.
Hence 
\[e=r+1.
\]

Suppose that $n$ is even. Then $n=2s+2$.
The standard resolution of $\mathcal{E}(1)$
modified according to Proposition~\ref{secondconstraint}
is 
\[
0
\to \mathcal{P}_2
\to \mathcal{P}^{0}_1
\to \mathcal{P}^{0}_0
\to \mathcal{E}(1)
\to 0,
\]
where 
\begin{equation*}
\begin{split}
\mathcal{P}_2&=\mathcal{O}^{\oplus e_{2,0}},\\
\mathcal{P}^{0}_1&=\mathcal{O}^{\oplus e_{1,0}-e_{0,0}}\oplus (\mathcal{S}^+)^{\oplus e_{1,1}}\oplus(\mathcal{S}^-)^{\oplus e_{1,2}},\\
\mathcal{P}^{0}_0&=(\mathcal{S}^+)^{\oplus e_{0,1}}\oplus(\mathcal{S}^-)^{\oplus e_{0,2}}\oplus \mathcal{O}(1)^{\oplus e}.\\
\end{split}
\end{equation*}
By Theorem~\ref{Usefulottaviani},
we see that 
$\Hom(\mathcal{S^+},\mathcal{O}(1))\cong H^0(\mathcal{S^-})$ 
and $\Hom(\mathcal{S^-},\mathcal{O}(1))\cong H^0(\mathcal{S^+})$ 
if $s$ is 
even,
and that $\Hom(\mathcal{S^+},\mathcal{O}(1))\cong H^0(\mathcal{S^+})$
and $\Hom(\mathcal{S^-},\mathcal{O}(1))\cong H^0(\mathcal{S^-})$ if $s$ is 
odd.
In the following, we denote $\mathcal{S}^+$ and  $\mathcal{S}^-$ simply by $\mathcal{S}$ 
if no confusion occurs.
Since $\dim H^0(\mathcal{S})=2^{s+1}$ by Theorem~\ref{Usefulottaviani}, 
we see that $e_{1,1}=2^{s+1}e$,
that $e_{1,2}=2^{s+1}e$, and that $e_{2,0}=2^{2s+3}e=2^{n+1}e$.
Since $\det\mathcal{S}\cong \mathcal{O}(2^{s-1})$ by Theorem~\ref{Usefulottaviani}, by looking at $\det(\mathcal{E}(1))$,
we see that 
\[1+r=e+2^{s-1}(e_{0,1}+e_{0,2}-e_{1,1}-e_{1,2})=2^{s-1}(e_{0,1}+e_{0,2})+(1-2^{n-1})e.\]
Hence 
\[e_{0,1}+e_{0,2}=2^{1-s}\{1+r+(2^{n-1}-1)e\}.
\]
Since $\rk \mathcal{S}=2^s$ by Theorem~\ref{Usefulottaviani},
by looking at $\rk\mathcal{E}(1)$, we see that 
\begin{equation*}
\begin{split}
r&=e+2\{1+r+(2^{n-1}-1)e\}-2^{n}e
-e_{1,0}
+e_{0,0}+2^{n+1}e\\
&=
2r+2+(2^{n+1}-1)e-e_{1,0}+e_{0,0}.
\end{split}
\end{equation*}
Hence $e_{1,0}-e_{0,0}=r+2+(2^{n+1}-1)e$.
Summing up, we have a locally free resolution
\[
0
\to 
\mathcal{P}_2
\to 
\mathcal{P}^{0}_1
\to 
\mathcal{P}^{0}_0
\to \mathcal{E}(1)\to 0,
\]
where 
\begin{equation*}
\begin{split}
\mathcal{P}_2&=\mathcal{O}^{\oplus 2^{n+1}e},\\
\mathcal{P}^{0}_1&=\mathcal{O}^{\oplus r+2+(2^{n+1}-1)e}\oplus(\mathcal{S}^+)^{\oplus 2^{s+1}e}\oplus(\mathcal{S}^-)^{\oplus 2^{s+1}e},\\
\mathcal{P}^{0}_0&=(\mathcal{S}^+)^{\oplus e_{0,1}}\oplus(\mathcal{S}^-)^{\oplus e_{0,2}}\oplus \mathcal{O}(1)^{\oplus e},
\textrm{ \quad }e_{0,1}+e_{0,2}=2^{1-s}\{1+r+(2^{n-1}-1)e\}.
\end{split}
\end{equation*}
We split the above long exact sequence into the following two short exact sequences
\begin{equation*}
0
\to \mathcal{P}_2
\to \mathcal{P}^{0}_1
\to \mathcal{G}
\to 0, \quad
0
\to \mathcal{G}
\to \mathcal{P}^{0}_0
\to \mathcal{E}(1)\to 0.
\end{equation*}
Note that $\RHom(\mathcal{S}(1),\mathcal{O}(1))\cong 0$
and that 
$\RHom(\mathcal{S}(1),\mathcal{O})\cong 0$ as in the odd-dimensional case.
Hence $\RHom(\mathcal{S}(1),\mathcal{P}_2)\cong 0$, and thus 
$\RHom(\mathcal{S}(1),\mathcal{P}^0_1)\cong \RHom(\mathcal{S}(1),\mathcal{G})$.
Since both 
$\RHom((\mathcal{S}^+\oplus \mathcal{S}^-)(1),\mathcal{S}^+)$
and 
$\RHom((\mathcal{S}^+\oplus \mathcal{S}^-)(1),\mathcal{S}^-)$
are isomorphic to $K[-1]$
by Lemma~\ref{S(1)toS},
we see that 
\begin{equation*}
\begin{split}
\RHom((\mathcal{S}^+\oplus \mathcal{S}^-)(1),\mathcal{P}^{0}_0)
&
\cong K[-1]^{\oplus 2^{1-s}\{1+r+(2^{n-1}-1)e\}},
\\
\RHom((\mathcal{S}^+\oplus \mathcal{S}^-)(1),\mathcal{P}^{0}_1)
&
\cong K[-1]^{\oplus 2^{s+2}e}.
\end{split}
\end{equation*}
Therefore we obtain a distinguished triangle
\[
K[-1]^{\oplus 2^{s+2}e}
\to
K[-1]^{\oplus 2^{1-s}\{1+r+(2^{n-1}-1)e\}}
\to
\RHom((\mathcal{S}^+\oplus \mathcal{S}^-)(1),\mathcal{E}(1))
\to.
\]
Since we assume now that $\Hom (\mathcal{S}^+\oplus \mathcal{S}^-,\mathcal{E})=0$,
we get an exact sequence
\[
0
\to 
K^{\oplus 2^{s+2}e}
\to
K^{\oplus 2^{1-s}\{1+r+(2^{n-1}-1)e\}}
\to
\Ext^1((\mathcal{S}^+\oplus \mathcal{S}^-)(1),\mathcal{E}(1))
\to
0.
\]
In particular we have $2^{s+2}e\leqq 2^{1-s}\{1+r+(2^{n-1}-1)e\}$, i.e., $e\leqq r+1$.
Hence 
\[e=r+1.
\]
\end{proof}

\bibliographystyle{plain}
%\bibliography{myrefsver17}
%\bibliographystyle{alpha}
%\bibliography{myrefsver10}
\bibliography{NefOnHyperquadrics.bbl}
\end{document}